\numberwithin{equation}{section}
\theoremstyle{plain}
\newtheorem{theorem}{Theorem}[section]
\newtheorem{lemma}[theorem]{Lemma}
\newtheorem{proposition}[theorem]{Proposition}
\theoremstyle{definition}
\newtheorem{definition}[theorem]{Definition}
\newtheorem{remark}[theorem]{Remark}
\newcommand{\E}{\mathbb{E}}
\newcommand{\ud}{\ensuremath{\mathrm{d}}}
\newcommand{\sgn}{\text{sgn}}
\newcommand{\Indt}[1]{1_{\left\{#1 \right\}}}
\newcommand{\Norm}[1]{\left|\left|  #1   \right|\right|}
\newcommand{\calB}{\mathcal{B}}
\newcommand{\calF}{\mathcal{F}}
\newcommand{\calK}{\mathcal{K}}
\newcommand{\calH}{\mathcal{H}}
\newcommand{\calI}{\mathcal{I}}
\newcommand{\calL}{\mathcal{L}}
\newcommand{\calM}{\mathcal{M}}
\newcommand{\calN}{\mathcal{N}}
\newcommand{\calP}{\mathcal{P}}
\newcommand{\bbC}{\mathbb{C}}
\newcommand{\bbN}{\mathbb{N}}
\newcommand{\R}{\mathbb{R}}
\DeclareMathOperator{\Lip}{\mathit{L}}
\DeclareMathOperator{\LIP}{Lip}
\DeclareMathOperator{\lip}{\mathit{l}}
\DeclareMathOperator{\Vip}{\overline{\varsigma}}
\DeclareMathOperator{\vip}{\underline{\varsigma}}
\DeclareMathOperator{\vv}{\varsigma}
\newcommand{\Dxa}{{}_x D_\delta^a}
\newcommand{\lMr}[3]{\:{}_{#1} #2_{#3}}
\newcommand{\myRef}[2]{#1}
\title{Moments, Intermittency and Growth Indices for\\
 the Nonlinear Fractional Stochastic Heat
Equation }
\author{{\bf Le Chen}\footnote{Research
partially supported by the Swiss National Foundation for Scientific
Research.} \footnote{Current address: Department of Mathematics, University of Utah,
155 S 1400 E RM 233, Salt Lake City, Utah, 84112-0090, USA. \textit{E-mail:} \url{chenle02@gmail.com}}
\,and
{\bf Robert C. Dalang$^*$\footnote{Address: Institut de math\'ematiques, \'Ecole Polytechnique F\'ed\'erale de Lausanne, Station 8,
CH-1015 Lausanne,
Switzerland. \textit{E-mail:} \url{robert.dalang@epfl.ch}}}\\
\\
\it\small \'Ecole Polytechnique F\'ed\'erale de Lausanne \\
\date{}
}
\begin{document}
\maketitle
\begin{center}
\begin{minipage}[rct]{5 in}
\footnotesize \textbf{Abstract:}
We study the nonlinear fractional stochastic heat equation in
the spatial domain $\R$ driven by space-time white noise.  The initial
condition is taken to be a measure on $\R$, such as the Dirac delta function,
but this measure may also have non-compact support.
Existence and uniqueness, as well as upper and lower bounds on all $p$-th
moments $(p\ge 2)$, are obtained. These bounds are uniform in the spatial variable, which answers an open problem mentioned in Conus and Khoshnevisan \cite{ConusKhosh10Farthest}.
We improve the weak intermittency statement by Foondun and Khoshnevisan
\cite{FoondunKhoshnevisan08Intermittence}, and we show that the growth indices (of linear type) introduced in
\cite{ConusKhosh10Farthest} are infinite. We introduce the notion of
``growth indices of exponential type" in order to characterize the manner in which high peaks propagate away from the origin, and
we show that the presence of a fractional differential operator leads to significantly different behavior compared with the standard stochastic heat equation.

\vspace{2ex}
\textbf{MSC 2010 subject classifications:}
Primary 60H15. Secondary 60G60, 35R60.

\vspace{2ex}
\textbf{Keywords:}
nonlinear fractional stochastic  heat equation, parabolic Anderson model,
rough initial data, intermittency, growth indices, stable processes.
\vspace{4ex}
\end{minipage}
\end{center}

\setlength{\parindent}{1em}



\section{Introduction}

In this paper, we consider the following nonlinear fractional stochastic
heat
equation:
\begin{align}\label{E:FracHt}
 \begin{cases}
  \left(\displaystyle\frac{\partial}{\partial t} - \Dxa \right) u(t,x) =
\rho\left(u(t,x)\right) \dot{W}(t,x),& t\in \R_+^*:=\;]0,+\infty[\;,\: x\in\R,\cr
u(0,\circ) = \mu(\circ),
 \end{cases}
\end{align}
where $a \in \;]0,2]$ is the order of the fractional differential
operator $\Dxa$ and $\delta$ ($|\delta|\le 2-a$) is its
skewness, $\dot{W}$ is the space-time white noise,
$\mu$ is the initial data (a measure),
the function $\rho:\R\mapsto \R$ is Lipschitz continuous,
and $\circ$ denotes the spatial dummy variable.
We refer to \cite{MainardiEtc01Fundamental} and \cite{Debbi06Explicit,
DebbiDozzi05On} for more details on these fractional differential operators.

This equation falls into a class of equations studied by Debbi and Dozzi \cite{DebbiDozzi05On}.
According to \cite[Theorem 11]{Dalang99}, even the linear form of \eqref{E:FracHt} ($\rho\equiv 1$) does not have a solution if $a\le 1$,
so they consider $a\in \:]1,2]$.
If we focus on deterministic initial conditions, then in our setting \eqref{E:FracHt}, they proved in \cite[Theorem
1]{DebbiDozzi05On} that there is a unique random field
solution if $\mu$ has a bounded density.
Equation \eqref{E:FracHt} is of particular interest since it is an extension of the classical {\it parabolic Anderson model} \cite{CarmonaMolchanov94}, in which $a=2$ and $\delta = 0$, so $\Dxa$ is the operator
$\partial^2/\partial x^2$, and $\rho(u) = \lambda u$ is a linear function.
Foondun and Khoshnevisan \cite{FoondunKhoshnevisan08Intermittence} considered problem
\eqref{E:FracHt} with the operator $\Dxa$ replaced by the $L^2\left(\R\right)$-generator $\calL$ of a L\'evy process.
They proved the existence of a random field solution under the assumption that
the initial data $\mu$ has a bounded and nonnegative density.
In \cite{ConusEct12Initial}, the operator $\Dxa$ is replaced by the generator of
a symmetric L\'evy process and the authors prove that $\mu$ can be any finite
Borel measure on $\R$.
Recently Balan and Conus \cite{BalanConus13HeatWave} studied the problem when the noise is Gaussian,
spatially homogeneous and behaves in time like a fractional Brownian motion with Hurst index $H>1/2$.

In the spirit of \cite{ChenDalang13Heat}, we begin by extending the above results (for the operator $\Dxa$) to allow a wider class of
initial data:
Let
$\calM\left(\R\right)$ be the set of signed Borel measures on $\R$.
From the Jordan
decomposition, $\mu=\mu_+-\mu_-$ where $\mu_\pm$ are two non-negative Borel
measures with disjoint support, and denote $|\mu|=\mu_++\mu_-$.
Then our admissible initial data is $\mu\in\calM_a(\R)$, where
\[
\calM_a\left(\R\right):=\left\{
\mu\in\calM(\R):\;\sup_{y\in\R}\int_\R |\mu|(\ud x) \frac{1}{1+|x-y|^{1+a}}<+\infty
\right\},\quad\text{for $a\in \:]1,2]$.}
\]
Moreover, we obtain estimates for the moments $\E(|u(t,x)|^p)$ for all $p\ge 2$.

Let us define the {\it upper and lower Lyapunov exponents of order $p$} by
\begin{align}\label{E:Lypnv-x}
\overline{m}_p(x) :=\mathop{\lim\sup}_{t\rightarrow+\infty}
\frac{1}{t}\log\E\left(|u(t,x)|^p\right),\quad
\underline{m}_p(x) :=\mathop{\lim\inf}_{t\rightarrow+\infty}
\frac{1}{t}\log\E\left(|u(t,x)|^p\right),
\end{align}
for all $p\ge 2$ and $x\in\R$.
If the initial data is constant, then $\underline{m}_p$ and $\overline{m}_p$ do
not depend on $x$. In this case, a solution is called {\it fully intermittent}
if $\underline{m}_2>0$ and $m_1=0$ by Carmona and Molchanov \cite[Definition
III.1.1, on p. 55]{CarmonaMolchanov94}. For a detailed discussion of the meaning of this intermittency property, see \cite{khosh1}. Informally, it means that the sample paths of $u(t,x)$ exhibit
``high peaks" separated by ``large valleys."

Foondun and Khoshnevisan proved {\it weak intermittency} in
\cite{FoondunKhoshnevisan08Intermittence}, namely, for all $p\ge 2$,
\[
\overline{m}_2(x) >0\;,\quad\text{and}\quad
\overline{m}_p(x) <+\infty\quad\text{for all $x\in\R$}\;,
\]
under the conditions that $\mu(\ud x)=f(x)\ud x$ with $\inf_{x\in\R} f(x)>0$ and $\inf_{x\ne 0}|\rho(x)/x|>0$.
We improve this result by showing in Theorem \ref{T:Intermittency} that
when $1<a<2$, $|\delta|<2-a$ (strict inequality) and $\mu\in\calM_a(\R)$ is nonnegative and nonvanishing,
then for all $p\ge 2$,
\begin{align*}
\inf_{x\in\R}\underline{m}_p(x)>0,\quad\text{and}\quad \sup_{x\in\R} \overline{m}_p(x)<+\infty.
\end{align*}
For this, we need a growth condition on $\rho$,
namely, that for some constants $\lip_\rho>0$ and $\vip \ge 0$,
\begin{align}\label{E:lingrow}
\rho(x)^2\ge \lip_{\rho}^2\left(\vip^2+x^2\right),\qquad \text{for
all $x\in\R$}\;.
\end{align}
In a forthcoming paper \cite{ChenKim14CP}, this weak intermittency property will be extended to full intermittency by showing
 in addition that $m_1(x)\equiv 0$.

Our result  answers an open problem
stated by Conus and Khoshnevisan \cite{ConusKhosh10Farthest}. Indeed, for the
case of the fractional Laplacian, which corresponds to our setting with $a\in \;]1,2[$
and
$\delta =0$, they ask whether the function $t\mapsto \sup_{x\in\R}
\E\left(|u(t,x)|^2\right)$ has exponential growth in $t$ for initial data with
exponential decay.
Our answer is ``yes'' under the condition \eqref{E:lingrow}.
In addition, under these conditions,
if $\mu\in\calM_{a,+}\left(\R\right)$ (where the ``$+$" sign in the subscript $\calM_{a,+}(\R)$ refers to the subset of nonnegative
measures) and $\mu\ne 0$, then for fixed $x\in\R$, the
function
$t\mapsto \E\left(|u(t,x)|^2\right)$ has at least exponential
growth; see Remark \ref{R:ExpInc}.

When the initial data are supported near the origin,
we define the following {\it growth indices of exponential type}:
\begin{align}
\label{E:SupGrowInd-0}
\underline{e}(p):= &
\sup\left\{\alpha>0: \underset{t\rightarrow \infty}{\lim}
\frac{1}{t}\sup_{|x|\ge \exp(\alpha t)} \log \E\left(|u(t,x)|^p\right) >0
\right\},\\
\label{E:SupGrowInd-1}
 \overline{e}(p) := &
\inf\left\{\alpha>0: \underset{t\rightarrow \infty}{\lim}
\frac{1}{t}\sup_{|x|\ge \exp(\alpha t)} \log \E\left(|u(t,x)|^p\right) <0
\right\}\;,
\end{align}
in order to give a proper characterization of the propagation speed of ``high peaks''.
This concept is discussed in Conus and Khoshnevisan \cite{ConusKhosh10Farthest}. These authors define
analogous indices $\underline{\lambda}(p)$ and
$\overline{\lambda}(p)$,
in which $|x|\ge \exp(\alpha t)$ is replaced by $|x|\ge \alpha t$,
which we call {\it growth indices of linear type}.

Conus and Khoshnevisan \cite{ConusKhosh10Farthest} consider the case where $\Dxa$ is replaced by the generator $\calL$ of a
real-valued symmetric L\'evy process $\left\{X_t\right\}_{t\ge 0}$. They showed
in \cite[Theorem 1.1 and Remark 1.2]{ConusKhosh10Farthest} that if the initial
data $\mu$ is a nonnegative lower semicontinuous function with certain
exponential decay at infinity, and if $X_1$ has exponential moments, then
\[
0<\underline{\lambda}(p)\le \overline{\lambda}(p)<+\infty\;,\quad
\text{for all $p\in [2,+\infty)$}\;.
\]
An important example of such a L\'evy process is the
``truncated symmetric stable process".

Here, we will  be able to consider (not necessarily symmetric) stable processes with $a\in\;]1,2]$,
for which even the second moment of $X_1$ does not exist, and we will see that when $1<a<2$, the presence of the fractional differential operator $\Dxa$ leads to significantly different behaviors of
the speed of propagation of high peaks, compared to that obtained in \cite{ConusKhosh10Farthest}.

   First, we show that if the initial data has sufficient decay at $\pm \infty$, then $\overline{e}(p) < \infty$. Then we show that if $1<a<2$ and $\vert\delta\vert < 2-a$ (meaning that the underlying
stable process has both positive and negative jumps), then
\begin{align}\label{E:ep}
\underline{e}(p)>0\;,\quad
\text{for all $p\in [2,+\infty)$ and $\mu\in
\calM_{a,+}(\R)$, $\mu\ne
0$}\:,
\end{align}
provided $\rho$ satifies condition \eqref{E:lingrow}. This conclusion applies, for instance, to the case where the initial data $\mu$ is the Dirac delta function. In particular, for well-localized
initial data (for instance, $\mu$ has a positive moment), $0< \underline{e}(p) \leq \overline{e}(p) < +\infty$, whereas for initial data that is bounded below ($\mu(dx) = f(x) dx$ with $f(x) > c >0$,
for all $x\in \R$), $\underline{e}(p) = \overline{e}(p) = +\infty$. See Theorem \ref{T:Growth} for the precise statements.
As a direct consequence, $\underline{\lambda}(p)=
\overline{\lambda}(p)=+\infty$ for all $p\in [2,\infty[$.



The structure of this paper is as follows. After introducing some preliminaries in Section \ref{S5:Pre}, the main results are
presented in Section \ref{S5:MainRes}: Existence and general bounds are given  in Theorem \ref{T:ExUni}, followed by explicit upper and lower
bounds on the function $\calK$. These lead to our results on  weak intermittency (Theorem \ref{T:Intermittency})
and growth indices (Theorem \ref{T:Growth}). Section \ref{SS:ExUni} contains the proof of Theorem \ref{T:ExUni} and Section  \ref{S:IntGrow}
presents the proofs of Theorems \ref{T:Intermittency} and \ref{T:Growth}.

\section{Some preliminaries and notation}\label{S5:Pre}
The Green function associated to the
problem
\eqref{E:FracHt} is
\begin{align}\label{E:Green}
\lMr{\delta}{G}{a}(t,x) := \calF^{-1}
\left[\exp\left\{\lMr{\delta}{\psi}{a}(\cdot) t\right\} \right](x)
= \frac{1}{2\pi}\int_\R \ud \xi\: \exp\left\{i \xi x - t |\xi|^a
e^{-i \delta \pi\: \sgn(\xi)/2}\right\},
\end{align}
where $\calF^{-1}$ is the inverse Fourier transform and
\[
\lMr{\delta}{\psi}{a}(\xi) = -|\xi|^a e^{-i \delta \pi\:
\sgn(\xi)/2}\;.
\]
Denote the solution to the homogeneous equation
\begin{align*}
 \begin{cases}
  \left(\displaystyle\frac{\partial}{\partial t} - \Dxa \right) u(t,x) = 0,&
t\in \R_+^*\;,\: x\in\R,\cr
u(0,\circ) = \mu(\circ),
 \end{cases}
\end{align*}
by
\[
J_0(t,x) := \left(\lMr{\delta}{G}{a}(t,\circ)*\mu\right)(x)
=\int_\R \mu(\ud y)\:\lMr{\delta}{G}{a}(t,x-y),
\]
where ``$*$'' denotes the convolution in the space variable.

Let $W=\left\{
W_t(A),\, A\in\calB_b(\R),\, t\ge 0 \right\}$
be a space-time white noise
defined on a probability space $(\Omega,\calF,P)$, where
$\calB_b\left(\R\right)$ is the
collection of Borel sets with finite Lebesgue measure.
Let  $(\calF_t,\, t\ge 0)$ be the filtration generated by $W$ and augmented by the $\sigma$-field $\calN$ generated
by all $P$-null sets in $\calF$:
\[
\calF_t = \sigma\left(W_s(A):0\le s\le
t,A\in\calB_b\left(\R\right)\right)\vee
\calN,\quad t\ge 0.
\]
In the following, we fix this filtered
probability space $\left\{\Omega,\calF,\{\calF_t:t\ge0\},P\right\}$.
We use $\Norm{\cdot}_p$ to denote the
$L^p(\Omega)$-norm ($p\ge 1$).
With this setup, $W$ becomes a worthy martingale measure in the sense of Walsh
\cite{Walsh86}, and $\iint_{[0,t]\times\R}X(s,y) W(\ud s,\ud y)$ is
well-defined in this reference for a suitable class of random fields
$\left\{X(s,y),\; (s,y)\in\R_+\times\R\right\}$.

The rigorous meaning of the spde \eqref{E:FracHt} uses the integral formulation
\begin{equation}
\label{E:WalshSI}
 \begin{aligned}
  u(t,x) &= J_0(t,x)+I(t,x),\quad\text{where}\cr
I(t,x) &=\iint_{[0,t]\times\R}
\lMr{\delta}{G}{a}\left(t-s,x-y\right)\rho\left(u(s,x)\right)W(\ud s,\ud
y).
 \end{aligned}
\end{equation}

\begin{definition}\label{DF:Solution}
A process $u=\left(u(t,x),\:(t,x)\in\R_+^*\times\R\right)$  is called a {\it
random field solution} to
\eqref{E:FracHt} if:
\begin{enumerate}[(1)]
 \item $u$ is adapted, i.e., for all $(t,x)\in\R_+^*\times\R$, $u(t,x)$ is
$\calF_t$-measurable;
\item $u$ is jointly measurable with respect to
$\calB\left(\R_+^*\times\R\right)\times\calF$;
\item $\left(\lMr{\delta}{G}{a}^2 \star \Norm{\rho(u)}_2^2\right)(t,x)<+\infty$
for all $(t,x)\in\R_+^*\times\R$,
where ``$\star$'' denotes the simultaneous
convolution in both space and time
variables;
\item For all $(t,x)\in\R_+^*\times\R$, $u(t,x)$ satisfies \eqref{E:WalshSI} a.s.;
\item The function $(t,x)\mapsto I(t,x)$ mapping $\R_+^*\times\R$ into
$L^2(\Omega)$ is continuous;
\end{enumerate}
\end{definition}

Assume that the function $\rho:\R\mapsto \R$ is globally Lipschitz
continuous with Lipschitz constant $\LIP_\rho>0$.
We need some growth conditions on $\rho$:
Assume that for some constants $\Lip_\rho>0$ and $\Vip \ge 0$,
\begin{align}\label{E:LinGrow}
\rho(x)^2 \le \Lip_\rho^2 \left(\Vip^2 +x^2\right),\qquad \text{for
all $x\in\R$}.
\end{align}
Note that $\Lip_\rho\le \sqrt{2}\LIP_\rho$, and the inequality may be strict.
We shall also specially consider the linear case: $\rho(u)=\lambda u$ with
$\lambda\ne 0$, which is related to the
{\it parabolic Anderson model} ($a=2$). It is a
special case of the following near-linear growth condition:
for some constant $\vv\ge 0$,
\begin{align}\label{E:qlinear}
\rho(x)^2= \lambda^2\left(\vv^2+x^2\right),\qquad  \text{for
all $x\in\R$}\;.
\end{align}

For all $(t,x)\in\R_+^*\times \R$, $n\in\bbN$ and $\lambda\in\R$, define
\begin{align}
\notag
\calL_0\left(t,x;\lambda\right) &:= \lambda^2 \lMr{\delta}{G}{a}^2(t,x),  \\
\label{E:Ln}
\calL_n\left(t,x;\lambda\right)&:=
\underbrace{\left(\calL_0\star \cdots\star\calL_0\right)}_{\text{$n+1$
factors } \calL_0(\cdot,\circ;\lambda)}
,\quad\text{for $n\ge 1$,}
\end{align}
and
\begin{align}
\calK\left(t,x;\lambda\right)&:= \sum_{n=0}^\infty
\calL_n\left(t,x;\lambda\right),
\label{E:K}
\end{align}
(the convergence of this series is established in Proposition \ref{P:UpperBdd-K}).
For $t\ge 0$, define
\[
\calH(t;\lambda) := \left(1\star \calK(\cdot,\circ;\lambda)\right)(t,x).
\]

Let $z_p$ be the the universal constant in the Burkholder-Davis-Gundy
inequality (in particular, $z_2=1$), and so $z_p\le 2\sqrt{p}$ for all $p\ge 2$; see \cite[Appendix]{CarlenKree91}. Define
\[
a_{p,\Vip} =
\begin{cases}
2^{(p-1)/p} & \text{if $\Vip\ne 0$ and $p>2$},\cr
\sqrt{2} & \text{if $\Vip= 0$ and $p>2$},\cr
1 & \text{if $p=2$.}
\end{cases}
\]
We apply the following conventions to the
kernel functions $\calK(t,x;\lambda)$ (and similarly to $\calH(t;\lambda)$):
\begin{align*}
 \calK(t,x) &:= \calK(t,x;\lambda),&
\overline{\calK}(t,x) &:= \calK\left(t,x;\Lip_\rho\right),\\
\underline{\calK}(t,x) &:= \calK\left(t,x;\lip_\rho\right),&
\widehat{\calK}_p(t,x) &:= \calK\left(t,x;a_{p,\Vip}\:
z_p \Lip_\rho\right),\quad\text{for $p\ge 2$}\:.
\end{align*}

\section{Main results}\label{S5:MainRes}

\subsection{Existence, uniqueness and moments}
The following theorem extends the result of \cite[Theorem 2.4]{ChenDalang13Heat} from $a=2$ to $a\in \:]1,2]$.
In view of the related result \cite[Theorem 2.3]{ChenDalang13Wave} and Remark 2.4 in this reference,
the bounds in this theorem are not a surprise, though they do require a proof. The main effort will be to turn
these abstract bounds into concrete estimates, via explicit upper and lower bounds on the functions $\calK$ and $\calH$ (see Section \ref{sec3.2}).
For $\tau\ge t> 0$ and $x, y\in\R$, define
\begin{align*}
\calI(t,x,\tau, y;\vv,\lambda) :=&
\lambda^2\int_0^t \ud r\int_\R  \ud z
\left[
J_0^2(r,z)+\left(J_0^2(\cdot,\circ)\star
\calK(\cdot,\circ;\lambda)\right)(r,y)+ \vv^2
\left(\calH(r;\lambda)+1\right)
\right]\\
& \qquad\qquad \times \lMr{\delta}{G}{a}(t-r,x-z)\lMr{\delta}{G}{a}(\tau-r,y-z).
\end{align*}

\begin{theorem}[Existence,uniqueness and moments]\label{T:ExUni}
Suppose that
\begin{enumerate}[(i)]
\item $1<a\le 2$ and $|\delta|\le 2-a$;
 \item the function $\rho$ is Lipschitz continuous and satisfies the growth
condition \eqref{E:LinGrow};
 \item the initial data are such that $\mu\in \calM_a\left(\R\right)$.
\end{enumerate}
Then the stochastic pde \eqref{E:FracHt} has a random
field solution $\{u(t,x)\!: (t,x)\in\R_+^* \times \R \}$. Moreover:\\
(1) $u(t,x)$ is unique in the sense of versions;\\
(2) $(t,x)\mapsto u(t,x)$ is $L^p(\Omega)$-continuous for all integers $p\ge
2$; \\
(3) For all even integers $p\ge 2$, all $\tau\ge t>0$ and $x,y\in\R$,
\begin{align}\label{E:SecMom-Up}
 \Norm{u(t,x)}_p^2 \le
\begin{cases}
 J_0^2(t,x) + \left(\left[\Vip^2+J_0^2\right] \star \overline{\calK} \right)
(t,x),& \text{if $p=2$}\;,\cr
2J_0^2(t,x) + \left(\left[\Vip^2+2J_0^2\right] \star \widehat{\calK}_p \right)
(t,x),& \text{if $p>2$}\;,
\end{cases}
\end{align}
and
\begin{equation}
\label{E:TP-Up}
 \E\left[u(t,x)u\left(\tau,y\right)\right] \le
J_0(t,x)J_0\left(\tau,y\right) + \calI(t,x,\tau,y;\Vip,\Lip_\rho)\;.
\end{equation}
(4) If $\rho$ satisfies \eqref{E:lingrow}, then for  all $\tau\ge t>0$ and $x,y\in\R$,
\begin{align}
\label{E:SecMom-Lower}
\Norm{u(t,x)}_2^2 \ge J_0^2(t,x) + \left(\left(\vip^2+J_0^2\right) \star
\underline{\calK} \right)
(t,x),
\end{align}
and
\begin{equation}
\label{E:TP-Lower}
\E\left[u(t,x)u\left(\tau,y\right)\right] \ge
J_0(t,x)J_0\left(\tau,y\right)  + \calI(t,x,\tau,y;\vip,\lip_\rho)\;.
\end{equation}
(5) If $\rho$ satisfies \eqref{E:qlinear}, then for  all $\tau\ge t>0$ and $x,y\in\R$,
\begin{align}
\label{E:SecMom-Lin}
\Norm{u(t,x)}_2^2 = J_0^2(t,x) + \left(\left(\vv^2+J_0^2\right) \star
\calK \right)
(t,x),
\end{align}
and
\begin{equation}
\label{E:TP-Lin}
\E\left[u(t,x)u\left(\tau,y\right)\right] =
J_0(t,x)J_0\left(\tau,y\right)  + \calI(t,x,\tau,y;\vv,\lambda)\;.
\end{equation}
\end{theorem}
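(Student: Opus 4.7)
The natural strategy is Picard iteration. Set $u_0(t,x):=J_0(t,x)$ and
\begin{align*}
u_{n+1}(t,x) := J_0(t,x) + \iint_{[0,t]\times\R} \lMr{\delta}{G}{a}(t-s,x-y)\, \rho(u_n(s,y))\, W(\ud s, \ud y).
\end{align*}
Before iterating, I would verify that $J_0(t,x)$ is finite and jointly measurable for $\mu\in\calM_a(\R)$, using the polynomial tail estimate $\lMr{\delta}{G}{a}(t,x)\lesssim t(t^{1/a}+|x|)^{-(1+a)}$ of stable densities --- this is precisely what makes the weight $(1+|x-y|)^{-(1+a)}$ in the definition of $\calM_a(\R)$ the right choice.

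For $p=2$, the Walsh isometry combined with the growth condition \eqref{E:LinGrow} yields
\begin{align*}
\Norm{u_{n+1}(t,x)}_2^2 \le J_0^2(t,x) + \left(\left[\Vip^2 + \Norm{u_n}_2^2\right] \star \Lip_\rho^2\, \lMr{\delta}{G}{a}^2\right)(t,x),
\end{align*}
and iterating produces the partial sums of the series defining $\overline{\calK}$, convolved against $\Vip^2 + J_0^2$. Proposition \ref{P:UpperBdd-K} then gives summability in the limit and yields \eqref{E:SecMom-Up} for $p=2$. For $p>2$, I would replace the isometry by Burkholder--Davis--Gundy (introducing the factor $z_p^2$) and split $\Norm{\rho(u_n)}_p^2$ using $(a+b)^2\le a_{p,\Vip}^2(a^2+b^2)$; the three cases in the definition of $a_{p,\Vip}$ arise from Young's inequality when $\Vip\ne 0$ (giving $2^{(p-1)/p}$), from splitting only two terms when $\Vip=0$ (giving $\sqrt{2}$), and trivially when $p=2$. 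This produces the kernel $\widehat{\calK}_p$. The Picard partial sums are then Cauchy in $L^p(\Omega)$ on compact space-time sets, so $(u_n)$ converges to a limit $u$ that satisfies \eqref{E:WalshSI}. The two-point bound \eqref{E:TP-Up} follows from the same BDG step applied to the stochastic integrals at $(t,x)$ and $(\tau,y)$.

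For the lower bound in (4), once existence is established I return to the identity \eqref{E:WalshSI} directly; since $J_0$ is deterministic and the stochastic integral is centered with second moment given by the Walsh isometry,
\begin{align*}
\Norm{u(t,x)}_2^2 = J_0^2(t,x) + \iint_{[0,t]\times\R} \lMr{\delta}{G}{a}^2(t-s,x-y)\, \Norm{\rho(u(s,y))}_2^2\, \ud s\, \ud y.
\end{align*}
Condition \eqref{E:lingrow} bounds the integrand from below by $\lip_\rho^2(\vip^2 + \Norm{u(s,y)}_2^2)$, and iteration gives \eqref{E:SecMom-Lower}; polarisation of the same identity gives \eqref{E:TP-Lower}. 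Statement (5) is identical with all inequalities replaced by equalities: under \eqref{E:qlinear} the recursion closes exactly with $\Norm{\rho(u)}_2^2 = \lambda^2(\vv^2 + \Norm{u}_2^2)$, and summing the convergent series of $\calL_n$'s produces \eqref{E:SecMom-Lin} and \eqref{E:TP-Lin}. Uniqueness in the sense of versions follows by applying a Gronwall-type argument to the Picard contraction estimate for the difference of two solutions on finite time intervals, and $L^p$-continuity is obtained by applying the same BDG estimates to the increment $u(t,x)-u(t',x')$, together with continuity of $J_0$ and $L^2$-continuity of the Green function.

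I expect two technical obstacles. First, handling the rough and possibly unbounded initial measure $\mu\in\calM_a(\R)$ demands uniform-in-$x$ control of $J_0$ and of convolutions of the form $J_0^2 \star \calK$, which requires combining sharp tail asymptotics of $\lMr{\delta}{G}{a}$ with the precise weight in the definition of $\calM_a(\R)$. Second, getting the $p>2$ bounds with the exact combinatorial constant $a_{p,\Vip}$ requires careful bookkeeping when splitting $\Norm{J_0 + \text{stochastic integral}}_p^2$ so that the resulting kernel matches $\widehat{\calK}_p$; this case analysis is the main source of technical nuisance, but it is a direct generalization of the argument already developed in \cite{ChenDalang13Heat} for $a=2$.
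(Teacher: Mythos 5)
Your proposal is correct and follows essentially the same Picard-iteration strategy as the paper, which itself refers back to the proof of \cite[Theorem 2.4]{ChenDalang13Heat} and replaces the heat-kernel estimates there by Propositions \ref{P:UpperBdd-K}, \ref{P:G}, \ref{P:G-Margin} and Lemma \ref{L:InDt}. Your outline correctly identifies the role of the Walsh isometry for $p=2$, Burkholder--Davis--Gundy plus the convexity splitting for $p>2$ (the origin of $a_{p,\Vip}$), the tail bound on $\lMr{\delta}{G}{a}$ as the reason $\calM_a(\R)$ is the right class of initial data, the derivation of the lower and exact moment formulas by returning to \eqref{E:WalshSI} after existence is established, and the Gronwall argument for uniqueness.
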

The proof of this theorem is given in Section \ref{SS:ExUni}.

\subsection{Estimates on the kernel function $\calK(t,x)$}\label{sec3.2}
Recall that if the partial differential operator is the heat operator
$\frac{\partial }{\partial  t}-\frac{\nu}{2} \Delta$, then
\begin{align}\label{E:K-Heat}
\calK^{\mbox{\scriptsize heat}}(t,x;\lambda) = G_{\frac{\nu}{2}}(t,x)
\left(\frac{\lambda^2}{\sqrt{4\pi\nu t}}+\frac{\lambda^4}{2\nu}
\: e^{\frac{\lambda^4 t}{4\nu}}\Phi\left(\lambda^2
\sqrt{\frac{t}{2\nu}}\right)\right),
\end{align}
where $\nu>0$ and $\Phi(x)$ is the distribution function of the standard norm
random variable; see \cite[Proposition 2.2]{ChenDalang13Heat}. When the partial
differential operator is the wave operator
$\frac{\partial ^2 }{\partial t^2}-\kappa^2 \Delta$,
\begin{align}
\label{E:K-Wave}
\calK^{\mbox{\scriptsize wave}}(t,x;\lambda) = \frac{\lambda^2}{4}
I_0\left(\sqrt{\frac{\lambda^2((\kappa t)^2-x^2)}{2\kappa}}\right) \Indt{|x|\le
\kappa t},
\end{align}
where $\kappa>0$ and $I_0(x)$ is the modified Bessel function of the first kind
of order $0$; see \cite[Proposition 3.1]{ChenDalang13Wave}.

Except in the above two cases, we do not have an
explicit formula for the kernel function $\calK(t,x)$ in \eqref{E:K}.
In order to make use of the moment formulas in \eqref{E:SecMom-Up} and
\eqref{E:SecMom-Lower}, we derive upper and lower bounds on this
kernel function
in the following two propositions.
We will need the two-parameter {\it Mittag-Leffler
function} \cite[Section 1.2]{Podlubny99FDE}:
\begin{align}\label{E:Mittag-Leffler}
E_{\alpha,\beta}(z) := \sum_{k=0}^{\infty} \frac{z^k}{\Gamma(\alpha k+\beta)},
\qquad \alpha>0,\;\beta> 0\;,
\end{align}
where $\Gamma(x) = \int_0 ^\infty e^{-t} t^{x-1} \ud t $ is Euler's Gamma function \cite{NIST2010}.
Let $a^*$ be the dual of $a$: $1/a+1/a^*=1$.
By Lemma \ref{L:Green} below (Property (ii)),  the following constant
\begin{align}\label{E:Cst-dLa}
\Lambda=\lMr{\delta}{\Lambda}{a} := \sup_{x\in\R} \lMr{\delta}{G}{a}(1,x)\;
\end{align}
is finite. In particular,
\[
\lMr{0}{\Lambda}{a} = \lMr{0}{G}{a}(1,0) =
\frac{1}{2\pi} \int_\R\ud \xi\: \exp\left(-|\xi|^a \right) =
\frac{1}{a \pi} \int_0^\infty\ud t\: e^{-t} t^{1/a -1}
=
\frac{\Gamma\left(1+1/a\right)}{\pi}\;.
\]
In the following, we often omit the dependence of this constant on $\delta$ and
$a$ and simply write $\Lambda$ instead of $\lMr{\delta}{\Lambda}{a}$.
Define
\begin{align}\label{E:D-gamma}
\begin{aligned}
\gamma&:=\lambda^2\Lambda\:\Gamma(1/a^*),&
\overline{\gamma}&:=\Lip_\rho^2\Lambda\:\Gamma(1/a^*),\\
\underline{\gamma}&:=\lip_\rho^2\Lambda\:\Gamma(1/a^*), &
\widehat{\gamma}_p&:=a_{p,\Vip}^2
z_p^2\Lip_\rho^2\Lambda\:\Gamma(1/a^*),\quad\text{for $p\ge 2$.}
\end{aligned}
\end{align}
Clearly, $\widehat{\gamma}_2 = \overline{\gamma}$.

\begin{proposition}[Upper bound on $\calK(t,x)$]
\label{P:UpperBdd-K}
Suppose that $a\in\:]1,2]$ and $|\delta|\le 2-a$.
The kernel function $\calK(t,x)$ defined in \eqref{E:K} satisfies,
for all $t\ge 0$ and $x\in\R$,
\begin{align}\label{E:UpBd-K-Mit}
\calK(t,x)& \le \lMr{\delta}{G}{a}(t,x) \frac{\gamma}{t^{1/a}}E_{1/a^*,1/a^*}
\left(
\gamma t^{1/a^*}
\right)\\
&\le
\frac{C }{t^{1/a}} \lMr{\delta}{G}{a}(t,x)
\left(
1+t^{1/a} \exp\left(\gamma^{a^*} t\right)
\right),
\label{E:UpBd-K}
\end{align}
where the constant $C=C(a,\delta,\lambda)$ can be chosen as
\begin{align}\label{E:Cst-UpK}
C(a,\delta,\lambda):= \gamma \: \sup_{t\ge 0}
\frac{E_{1/a^*,1/a^*}\left(\gamma\:
t^{1/a^*}\right)}{1+t^{1/a}\exp\left(\gamma^{a^*}
t\right)}<+\infty\;.
\end{align}
\end{proposition}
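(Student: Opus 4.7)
The plan is to reduce the iterated space-time convolutions $\calL_n$ to scalar time integrals through two structural properties of $\lMr{\delta}{G}{a}$. The first is the scaling relation $\lMr{\delta}{G}{a}(t,x) = t^{-1/a}\lMr{\delta}{G}{a}(1,x\,t^{-1/a})$, which combined with the finiteness of $\Lambda$ gives $\lMr{\delta}{G}{a}(t,x) \le \Lambda\, t^{-1/a}$ and hence the pointwise domination
\[
\lMr{\delta}{G}{a}(t,x)^2 \le \Lambda\, t^{-1/a}\, \lMr{\delta}{G}{a}(t,x).
\]
The second is the Chapman--Kolmogorov identity $\int_\R \lMr{\delta}{G}{a}(t-s,x-y)\lMr{\delta}{G}{a}(s,y)\,\ud y = \lMr{\delta}{G}{a}(t,x)$. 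Together these collapse any spatial integral carrying a single $G^2$ factor into a power of $t$ times $\lMr{\delta}{G}{a}(t,x)$.

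With these tools I would prove by induction on $n$ that
\[
\calL_n(t,x;\lambda) \le \frac{\gamma^{n+1}}{\Gamma((n+1)/a^*)}\, t^{(n+1)/a^* - 1}\, \lMr{\delta}{G}{a}(t,x).
\]
The case $n=0$ is immediate from the pointwise bound above, since $1/a^*-1 = -1/a$. For the inductive step I write $\calL_{n+1} = \calL_n \star \calL_0$ and insert the inductive estimate together with the $n=0$ estimate; the spatial integration collapses via Chapman--Kolmogorov, leaving the Beta-type time integral $\int_0^t (t-s)^{(n+1)/a^*-1} s^{-1/a}\,\ud s = t^{(n+2)/a^*-1}\,\Gamma((n+1)/a^*)\Gamma(1/a^*)/\Gamma((n+2)/a^*)$. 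The Gamma factors telescope and one extra $\gamma = \lambda^2\Lambda\,\Gamma(1/a^*)$ appears, giving exactly the desired bound at level $n+1$.

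Summing the resulting series and factoring $\gamma\, t^{-1/a}$ out of every summand yields
\[
\calK(t,x) \le \lMr{\delta}{G}{a}(t,x)\,\gamma\, t^{-1/a} \sum_{n\ge 0} \frac{(\gamma\, t^{1/a^*})^n}{\Gamma((n+1)/a^*)} = \lMr{\delta}{G}{a}(t,x)\,\frac{\gamma}{t^{1/a}}\,E_{1/a^*,1/a^*}(\gamma\, t^{1/a^*}),
\]
which is \eqref{E:UpBd-K-Mit} and, as a byproduct, establishes convergence of the series defining $\calK$. The second bound \eqref{E:UpBd-K} then reduces to checking finiteness of the constant $C$ in \eqref{E:Cst-UpK}. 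For this I would check the two ends of $[0,\infty)$: at $t=0$ both numerator and denominator are continuous and strictly positive, and for $t\to\infty$ I invoke the classical asymptotic $E_{1/a^*,1/a^*}(z) \sim a^*\,z^{a^*-1}e^{z^{a^*}}$ valid for $1/a^*\in(0,1)$; substituting $z=\gamma\, t^{1/a^*}$ and using the identity $a^*-1=a^*/a$ shows the numerator grows like a constant times $t^{1/a}e^{\gamma^{a^*} t}$, matched by the denominator up to a multiplicative constant.

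The main obstacle I anticipate is administrative rather than conceptual: tracking the exponents of $t$ through the induction under the recurring identity $1/a + 1/a^* = 1$, and invoking the precise form of the Mittag--Leffler asymptotic for parameters $\alpha=\beta=1/a^*$ (e.g.\ from Podlubny's monograph or the NIST handbook already cited in the paper).
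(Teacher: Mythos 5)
Your proposal is correct and follows essentially the same route as the paper: the same inductive bound $\calL_n \le B_{n+1}\,\lMr{\delta}{G}{a}$ (the paper isolates it as Proposition \ref{P:K}), the same use of the scaling bound $\lMr{\delta}{G}{a}^2 \le \Lambda t^{-1/a}\lMr{\delta}{G}{a}$, the semigroup property, and Euler's Beta integral, followed by summing into a Mittag--Leffler function and invoking its exponential asymptotics (Lemma \ref{L:Eab}) to verify finiteness of $C$. Your bookkeeping of the exponents and of the asymptotic $E_{1/a^*,1/a^*}(z)\sim a^*z^{a^*-1}e^{z^{a^*}}$ is accurate, so there is no gap.
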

This proposition is proved in Section \ref{SS:ExUni}.
For a lower bound on $\calK(t,x)$, we need another family of kernel functions:
\begin{align}\label{E:ga}
g_a(t,x):=  \frac{1}{\pi}\: \frac{t}{\left(t^{2/a}+x^2
\right)^{\frac{a}{2}+\frac{1}{2}}}\:,\qquad \text{with $a>0$}\;.
\end{align}
These functions have the same scaling property as
$\lMr{\delta}{G}{a}(t,x)$:
\[
g_a(t,x) = \frac{1}{t^{1/a}} g_a\left(1,\frac{x}{t^{1/a}}\right).
\]
Note that $g_1(t,x)$ is nothing but the Poisson kernel (see, e.g.,
\cite[p. 268]{Yosida95FA}), which satisfies the
semigroup property
\[
\left(g_1(t-s,\cdot)*g_1(s,\cdot)\right)(x) = g_1(t,x),\qquad 0\le s\le t,\:
x\in\R.
\]
For $a\in \;]1,2[$ and $|\delta|< 2-a$, define
\begin{align}\label{E:Cad}
\widetilde{C}_{a,\delta} := \inf_{(t,x)\in\R_+^*\times\R}
\frac{\lMr{\delta}{G}{a}(t,x)}{\pi g_a(t,x)}>0\:,
\end{align}
which is strictly positive by Lemma \ref{L:Cad} below.  Then let
\begin{align}\label{E:Upsilon}
\Upsilon(\lambda,a,\delta):=
\frac{\lambda^4\: \widetilde{C}^4_{a,\delta} \, C_{a+1/2}^2}{2}\:
\Gamma\left(1-\frac{1}{a}\right)^2,
\end{align}
where
\begin{align}\label{E:Cnu}
C_\nu:= \frac{\Gamma(\nu)\Gamma(1/2)}{2\:\Gamma(\nu+1/2)}\;,\quad
\nu\ge 1/2\:.
\end{align}

\begin{proposition}[Lower bound on $\calK(t,x)$]
\label{P:LowBdd-K}
Fix $a\in \;]1,2[$ and $|\delta|< 2-a$ (note the strict inequality).
Set
\[
b=2-2/a \in \;]0,1]\,.
\]
Then
\begin{align*}
\calK(t,x)&\ge C\; t^{b-1} \: g_1\left(t^{1/a},x\right)\:
E_{b,b}\left(\Upsilon(\lambda,a,\delta) \: t^b \right),\quad
\text{for all $t>0$ and $x\in\R$},
\end{align*}
where the constant $C=C(\lambda,a,\delta)$ can be chosen to be
\[
C= 2^{-1/2}\lambda^4\:
\widetilde{C}^4_{a,\delta} \:C_{a+1/2}^2\: \Gamma(1-1/a)^2.
\]
In particular, for the same constant $C$, for all $t>0$ and $x\in\R$,
\[
\left(1\star \calK\right)(t,x) \ge
C \:t^b E_{b,b+1}\left(\Upsilon(\lambda,a,\delta) \: t^b \right).
\]
\end{proposition}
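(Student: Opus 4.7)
The plan is to exploit the series representation $\calK=\sum_{n\ge 0}\calL_n$, in which every summand is non-negative, and to establish inductively, for $n\ge 1$, the pointwise lower bound
\[
\calL_n(t,x)\;\ge\;C\,\Upsilon^{n-1}\,\frac{t^{nb-1}}{\Gamma(nb)}\,g_1\!\left(t^{1/a},x\right),
\]
with the $C$ and $\Upsilon$ from the statement. Summing over $n\ge 1$ and recognising the resulting series as $C\,t^{b-1}\,g_1(t^{1/a},x)\sum_{k\ge 0}(\Upsilon t^b)^k/\Gamma(b(k+1))=C\,t^{b-1}\,g_1(t^{1/a},x)\,E_{b,b}(\Upsilon t^b)$ gives the first inequality, since $\calK\ge\sum_{n\ge 1}\calL_n$.

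The starting point is the pointwise inequality $\lMr{\delta}{G}{a}(t,x)\ge\pi\,\widetilde{C}_{a,\delta}\,g_a(t,x)$ coming from the definition \eqref{E:Cad} (this is where the strict inequalities $1<a<2$, $|\delta|<2-a$ enter, via Lemma \ref{L:Cad}). Squaring yields $\calL_0(t,x)\ge\lambda^2\pi^2\widetilde{C}_{a,\delta}^2\,g_a^2(t,x)$. Two explicit building blocks enter throughout: first, the total $x$-mass $\int_\R g_a^2(s,z)\,\ud z=(2C_{a+1/2}/\pi^2)\,s^{-1/a}$, obtained via the substitution $z=s^{1/a}u$ and the beta integral $\int_\R(1+u^2)^{-(a+1)}\,\ud u=2C_{a+1/2}$, which accounts for the $C_{a+1/2}^2$ factor in $\Upsilon$; second, the time beta identity $\int_0^t s^{nb-1}(t-s)^{b-1}\,\ud s=\Gamma(nb)\Gamma(b)\,t^{(n+1)b-1}/\Gamma((n+1)b)$ that propagates the Mittag-Leffler structure, with the base-step instance $\int_0^t s^{-1/a}(t-s)^{-1/a}\,\ud s=\Gamma(1-1/a)^2\,t^{b-1}/\Gamma(b)$ producing the factor $\Gamma(1-1/a)^2$.

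The heart of the induction is a spatial-convolution lemma of the form
\[
\int_\R g_a^2(u,x-y)\,g_1\!\left(\alpha,y\right)\ud y\;\ge\;c'\,u^{-1/a}\,g_1\!\left(\alpha+u^{1/a},x\right),
\]
together with a matching version for two copies of $g_a^2$ to handle $n=1$. Such an estimate can be derived by inserting the Gaussian-mixture representation $g_a^2(u,z)=\tfrac{u^2}{\pi^2\Gamma(a+1)}\int_0^\infty v^a e^{-v(u^{2/a}+z^2)}\,\ud v$, exchanging orders of integration, and exploiting the Poisson semigroup $g_1(\alpha,\cdot)*g_1(\beta,\cdot)=g_1(\alpha+\beta,\cdot)$; alternatively, starting from the pointwise bound $g_a^2(u,z)\ge c''\,u^{-1/a}\,g_1(u^{1/a},z)\,\Indt{|z|\le u^{1/a}}$ (which follows from direct comparison on the scale-$u^{1/a}$ region) and integrating against $g_1(\alpha,\cdot)$. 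In the inductive step the resulting $g_1(s^{1/a}+(t-s)^{1/a},x)$ is replaced by a constant multiple of $g_1(t^{1/a},x)$ using the concavity bound $s^{1/a}+(t-s)^{1/a}\le 2^{1-1/a}\,t^{1/a}$ (valid because $a>1$) together with the monotonicity $g_1(T,x)\ge (t^{1/a}/T)\,g_1(t^{1/a},x)$ for $T\ge t^{1/a}$; the universal factor $2^{-1/2}$ in $C=2^{-1/2}\lambda^4\widetilde{C}_{a,\delta}^4C_{a+1/2}^2\Gamma(1-1/a)^2$ traces back to these geometric inequalities. The inductive step is then completed by the beta identity above. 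The integrated statement follows at once: $(1\star\calK)(t,x)=\int_0^t\!\int_\R\calK(\tau,y)\,\ud y\,\ud\tau$, combined with $\int_\R g_1(\tau^{1/a},y)\,\ud y=1$ and $\int_0^t\tau^{nb-1}/\Gamma(nb)\,\ud\tau=t^{nb}/\Gamma(nb+1)$, turns the sum into $C\,t^b\sum_{k\ge 0}(\Upsilon t^b)^k/\Gamma(bk+b+1)=C\,t^b\,E_{b,b+1}(\Upsilon t^b)$.

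The main obstacle is the spatial-convolution lemma. Because $g_a^2$ decays at infinity like $|x|^{-2(a+1)}$ whereas $g_1(t^{1/a},\cdot)$ decays only like $|x|^{-2}$, a pointwise lower bound in terms of $g_1$ is not automatic; it requires a delicate scale-matching that exchanges the heavy short-range mass of $g_a^2$ against the slower-decaying tails of $g_1$, and delivers the sharp constants $C_{a+1/2}$ and $\Gamma(1-1/a)$. The Gaussian-mixture representation together with the Poisson semigroup are the natural tools, but extracting the explicit prefactors needed for the stated $C$ and $\Upsilon$ demands careful bookkeeping rather than just an asymptotic argument.
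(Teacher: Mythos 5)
Your inductive hypothesis
\[
\calL_n(t,x)\;\ge\;C\,\Upsilon^{n-1}\,\frac{t^{nb-1}}{\Gamma(nb)}\,g_1\!\left(t^{1/a},x\right),\qquad n\ge 1,
\]
is actually \emph{false} for $n\ge 2$, so the whole inductive scheme collapses. To see this, compute the total $x$-mass of $\calL_n$. Since by Plancherel $\int_\R\lMr{\delta}{G}{a}^2(t,x)\,\ud x\sim t^{-1/a}$, the quantity $M_n(t):=\int_\R\calL_n(t,x)\,\ud x$ satisfies $M_n(t)=\int_0^t M_{n-1}(s)M_0(t-s)\,\ud s$, whence $M_n(t)\sim t^{\,n-(n+1)/a}$. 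On the other hand, integrating your claimed lower bound over $x$ (using $\int_\R g_1(t^{1/a},x)\,\ud x=1$) gives $M_n(t)\gtrsim t^{\,nb-1}=t^{\,2n-2n/a-1}$. But
\[
(2n-2n/a-1)-(n-(n+1)/a)=(n-1)\left(1-\tfrac1a\right)=\frac{n-1}{a^*}>0 \quad\text{for }n\ge 2,
\]
so your claimed bound grows strictly faster than the true total mass when $t\to\infty$ --- a contradiction. The hypothesis is only consistent when $n=1$.

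The same exponent defect is what prevents the inductive step from closing. Convolving $\calL_n$ with a single $\calL_0=\lambda^2\lMr{\delta}{G}{a}^2$, the $y$-integral produces a factor $(t-s)^{-1/a}$ (since $\int_\R g_a^2(u,z)\,\ud z\sim u^{-1/a}$), so the time beta integral reads $\int_0^t s^{nb-1}(t-s)^{-1/a}\,\ud s\sim t^{\,nb-1/a}$, whereas the target is $t^{\,(n+1)b-1}$; these differ by $t^{1/a^*}$. The paper avoids this precisely by inducting on \emph{pairs} of $g_a^2$ factors: writing $\calK\ge\sum_{n\ge 1}(\lambda^2\widetilde{C}_{a,\delta}^2\pi^2 g_a^2)^{\star 2n}$ and using $(g_a^2)^{\star 2(n+1)}=(g_a^2)^{\star 2n}\star(g_a^2)^{\star 2}$, so that \emph{both} convolution factors carry a $g_1$ spatial profile and, crucially, the time factor from $(g_a^2)^{\star 2}$ is $(t-s)^{b-1}$ rather than $(t-s)^{-1/a}$, giving $\int_0^t s^{nb-1}(t-s)^{b-1}\,\ud s\sim t^{(n+1)b-1}$. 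Your induction would need to be restructured in this pairwise fashion to close. Secondarily, your proposed route to the spatial-convolution lemma also has a snag: after inserting the (correct) gamma-mixture representation of $g_a^2$, the inner integral $\int_\R e^{-v(x-y)^2}g_1(\alpha,y)\,\ud y$ is a Voigt-type profile --- a Gaussian-Cauchy convolution --- which the Poisson semigroup does not linearize, and the alternative pointwise cutoff $g_a^2\ge c''u^{-1/a}g_1\Indt{|z|\le u^{1/a}}$ breaks the semigroup identity because the convolution is then restricted to a bounded set. The paper instead establishes the needed base estimate $(g_a^2)^{\star 2}(t,x)\ge K_1t^{b-1}g_1(t^{1/a},x)$ via Plancherel and a lower bound on $\calF[(b^2+\cdot^2)^{-\nu-1/2}]$ in terms of modified Bessel functions (Lemma \ref{L:Fourier}), and then uses only the exact Poisson semigroup for $g_1\star g_1$ in the induction.
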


This proposition is proved in Section \ref{SS:LowBd-K}.

\subsection{Growth indices and weak intermittency}



\begin{theorem}[Weak intermittency]\label{T:Intermittency}
Suppose that $a\in \:]1,2]$ and $|\delta|\le 2-a$.\\
(1) If $\rho(u)$ satisfies \eqref{E:LinGrow} and
$\mu\in\calM_a(\R)$, then for all even integers $p\ge 2$,
\begin{align}\label{E:InterU}
 \sup_{x\in\R}\overline{m}_p(x) \le \frac{1}{2}\left(16\Lip_\rho^2
\Lambda\Gamma(1-1/a)\right)^{a/(a-1)} \: p^{2+1/(a-1)}.
\end{align}
(2) Suppose $\rho$ satisfies \eqref{E:lingrow}, $|\delta|<2-a$ (strict inequality) and $\mu\in \calM_{a,+}\left(\R\right)$.
If either $\mu\ne 0$ or $\vip\ne 0$, then for all $p\ge 2$, setting $b=2-2/a$,
\[
\inf_{x\in\R}\: \underline{m}_p(x)\ge
\frac{p}{2}
\Upsilon\left(\lip_\rho,a,\delta\right)^{1/b}>0.
\]
\end{theorem}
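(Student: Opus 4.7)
The proof splits along the two assertions. For the upper bound, the plan is to combine the $L^p$-moment estimate \eqref{E:SecMom-Up} with the upper bound on $\widehat{\calK}_p$ from Proposition \ref{P:UpperBdd-K}. For the lower bound, I first reduce from general $p$ to $p=2$ by Lyapunov's inequality, then use \eqref{E:SecMom-Lower} together with the lower bound from Proposition \ref{P:LowBdd-K} and the standard asymptotics of the Mittag-Leffler function.

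\textbf{Part (1).} Substituting Proposition \ref{P:UpperBdd-K} into \eqref{E:SecMom-Up}, then integrating out the spatial convolution (using $\int_\R \lMr{\delta}{G}{a}(s,y)\,\ud y = 1$) and using that $J_0$ is uniformly bounded on $\R_+^*\times\R$ under $\mu\in\calM_a(\R)$, one obtains
\[
\sup_{x\in\R}\|u(t,x)\|_p^2 \le C_1 + C_2 \int_0^t s^{-1/a}\bigl(1 + s^{1/a}\exp(\widehat{\gamma}_p^{a^*} s)\bigr)\,\ud s \le C_3\, \exp(\widehat{\gamma}_p^{a^*} t)
\]
for $t$ large. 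Taking logarithms gives $\sup_{x\in\R}\overline{m}_p(x) \le (p/2)\,\widehat{\gamma}_p^{a^*}$. Using $a_{p,\Vip}\le 2$, $z_p\le 2\sqrt p$, and $a^* = a/(a-1)$, one has $\widehat{\gamma}_p \le 16\,p\,\Lip_\rho^2\,\Lambda\,\Gamma(1-1/a)$; since $p\cdot p^{a/(a-1)} = p^{2+1/(a-1)}$, this yields \eqref{E:InterU}.

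\textbf{Part (2).} By Lyapunov's inequality, $\E|u(t,x)|^p \ge (\E|u(t,x)|^2)^{p/2}$, so $\underline{m}_p(x) \ge (p/2)\,\underline{m}_2(x)$, and it suffices to show $\inf_{x\in\R}\underline{m}_2(x) \ge \Upsilon(\lip_\rho,a,\delta)^{1/b}$. Starting from \eqref{E:SecMom-Lower}, split into two cases. If $\vip > 0$, drop the nonnegative $J_0^2$ terms to obtain
\[
\|u(t,x)\|_2^2 \ge \vip^2\,(1\star\underline{\calK})(t,x) \ge C\,\vip^2\, t^b\, E_{b,b+1}\bigl(\Upsilon(\lip_\rho,a,\delta)\, t^b\bigr),
\]
uniformly in $x$, by the last statement of Proposition \ref{P:LowBdd-K}. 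The classical Mittag-Leffler asymptotic $E_{b,\beta}(y) \sim (1/b)\,y^{(1-\beta)/b}\,\exp(y^{1/b})$ as $y\to\infty$ (valid for $b\in\;]0,1]$) then yields the growth rate $\Upsilon^{1/b}$. If $\vip = 0$ and $\mu \ne 0$, then $|\delta| < 2-a$ strict implies $\lMr{\delta}{G}{a} > 0$ on $\R_+^*\times\R$, so $J_0$ is everywhere positive, and by continuity there exist $(s_0,y_0)$ and $c_0, r > 0$ with $J_0 \ge c_0$ on $[s_0-r,s_0+r]\times[y_0-r,y_0+r]$. Then
\[
\|u(t,x)\|_2^2 \ge c_0^2\int_{s_0-r}^{s_0+r}\! \ud s\int_{y_0-r}^{y_0+r}\! \ud y\; \underline{\calK}(t-s,x-y),
\]
and substituting the pointwise lower bound from Proposition \ref{P:LowBdd-K} shows that for each fixed $x$, this grows like $(\text{polynomial in }t)\cdot\exp(\Upsilon^{1/b}t)$ as $t\to\infty$, so $\underline{m}_2(x) \ge \Upsilon^{1/b}$ for every $x$.

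\textbf{Main obstacle.} The delicate case is $\vip = 0$ in Part (2): without a uniform positive source term, one must verify that the exponential growth from $E_{b,b}(\Upsilon(t-s)^b)$ dominates the polynomial-in-$x$ decay of the spatial profile $g_1((t-s)^{1/a},x-y)$ contained in the lower bound on $\underline{\calK}$, so that $\inf_{x\in\R}\underline{m}_2(x)$ is indeed at least $\Upsilon^{1/b}$. The remaining ingredients (Mittag-Leffler asymptotics and the substitutions $z_p\le 2\sqrt p$, $a_{p,\Vip}\le 2$) are routine.
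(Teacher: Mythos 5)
Your Part~(2) argument is essentially correct and in fact cleverly replaces the paper's global lower bound on $J_0$ (Lemma~\ref{L:J0LowB}) by a purely local positivity argument: since $\lMr{\delta}{G}{a}(t,\cdot)>0$ on all of $\R$ for $|\delta|<2-a$, and $J_0$ is continuous, $J_0\ge c_0>0$ on a small box, which suffices because the exponential factor $E_{b,b}(\Upsilon(t-s)^b)$ of the kernel lower bound dominates the polynomial factor $g_1$ for every \emph{fixed} $x$. The $\vip\ne 0$ case and the reduction from general $p$ to $p=2$ via Lyapunov's inequality match the paper.

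Part~(1), however, has a genuine gap. The assertion ``$J_0$ is uniformly bounded on $\R_+^*\times\R$ under $\mu\in\calM_a(\R)$'' is false: for $\mu=\delta_0$ (which belongs to $\calM_a(\R)$), $J_0(t,x)=\lMr{\delta}{G}{a}(t,x)$ and $\sup_x J_0(t,x)=t^{-1/a}\Lambda\to\infty$ as $t\to 0$. The natural fall-back, substituting the valid pointwise bound $J_0(s,y)\lesssim_t s^{-1/a}$ into \emph{both} factors of $J_0^2$, fails as well: after integrating out $y$ in $(J_0^2\star\widehat{\calK}_p)(t,x)$ one faces
\[
   \int_0^t s^{-2/a}\bigl[(t-s)^{-1/a}+e^{\widehat{\gamma}_p^{a^*}(t-s)}\bigr]\,\ud s,
\]
which diverges at $s=0$ because $2/a\ge 1$ for $a\in\,]1,2]$. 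So the display you wrote (with the integrable weight $s^{-1/a}$) does not actually follow from your hypotheses. The paper's proof of this step hinges on a sharper device: write $J_0^2=J_0\cdot J_0$, replace only \emph{one} factor by the bound $\lesssim_t s^{-1/a}$, and for the other factor use the semigroup property
\[
   \int_\R J_0(s,y)\,\lMr{\delta}{G}{a}(t-s,x-y)\,\ud y = J_0(t,x)
\]
to pull it outside the $\ud s$-integral; this produces the integrable weight $s^{-1/a}$ (not $s^{-2/a}$) and gives \eqref{E:Jsq0K}, hence \eqref{E:J20K}. You need some version of this one-factor-semigroup trick to close the argument; the final inequality $\overline m_p(x)\le(p/2)\widehat\gamma_p^{\,a^*}$ and the subsequent substitutions $a_{p,\Vip}\le 2$, $z_p\le 2\sqrt p$ are correct once that intermediate bound is in place.
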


Note that if $a=2$,  then
for some constant $C$, we have that
$\overline{m}_p\le C p^3$, which recovers the previous analysis (see \cite{BertiniCancrini94Intermittence}, \cite[Example 2.7]{ChenDalang13Heat}, etc).


\begin{remark}\label{R:ExpInc}
Fix $p\ge 2$. Clearly, Theorem \ref{T:Intermittency} implies that for all $x\in\R$,
\begin{align*}
\mathop{\lim\inf}_{t\rightarrow\infty}
\frac{1}{t}\sup_{y\in\R} \log\E\left(|u(t,y)|^p\right)
&\ge
\mathop{\lim\inf}_{t\rightarrow\infty}
\frac{1}{t}\log\E\left(|u(t,x)|^p\right)=
\underline{m}_p(x)\ge \frac{p}{2}
\Upsilon\left(\lip_\rho,a,\delta\right)^{1/b}>0\:.
\end{align*}
Hence, the function $t\mapsto \sup_{y\in\R}\E\left(|u(t,y)|^p\right)$ has at
least exponential growth.
This answers the second open problem stated by Conus and Khoshnevisan in
\cite{ConusKhosh10Farthest}.
Moreover, Theorem \ref{T:Intermittency} implies that for all fixed $x\in\R$, the
function $t\mapsto \E\left(|u(t,x)|^p\right)$ also has at least exponential growth.
\end{remark}


Recall the definitions of the constants $\widehat{\gamma}_p$ and $\Upsilon(\lip_\rho,a,\delta)$ in \eqref{E:D-gamma} and \eqref{E:Upsilon}, respectively.

\begin{theorem}[Growth indices]\label{T:Growth}
(1) Suppose that $a\in \;]1,2]$, $|\delta|\le 2-a$ and $\rho$ satisfies \eqref{E:LinGrow}.
If there are $C < \infty$, $\alpha>0$ and $\beta > 0$ such that for all $(t,x) \in [1,\infty[\, \times \R$,
\begin{equation}\label{rd3.20}
   \vert J_0(t,x)\vert \leq C (1+ t^\alpha) (1+\vert x \vert)^{-\beta}.
\end{equation}
Then
\begin{equation}\label{rd3.20a}
\overline{e}(p)\le \frac{\widehat{\gamma}_p^{a/(a-1)}}{\beta}<+\infty.
\end{equation}
In particular, if, for some $\eta >0$,  $\int_\R|\mu|(\ud y)(1+|y|^{\eta})<\infty$, then \eqref{rd3.20} and \eqref{rd3.20a} are satisfied with
$\beta=\min(\eta, 1+a)$.
\\
\noindent
(2) Suppose that $a\in \;]1,2[$ (note that $a\ne 2$), $|\delta|< 2-a$ (strict inequality) and $\rho$ satisfies \eqref{E:lingrow}. Set $b = 2 - 2/a$.
For all $\mu\in \calM_{a,+}\left(\R\right)$, $\mu\ne
0$ and all $p\ge 2$, if $\vip =0$, then
\[
\underline{e}(p)\ge
\frac{\Upsilon\left(\lip_\rho,a,\delta\right)^{1/b}}{2}>0.
\]
For these $\mu$, if $\vip = 0$ and there is $c>0$ such that
\begin{equation}\label{rd3.20b}
   J_0(t,x) \geq c\qquad  \mbox{for all } (t,x) \in \R_+ \times \R,
\end{equation}
or if $\vip\ne 0$, then
$\underline{e}(p) = \overline{e}(p) =+\infty$.
In particular, $\underline{\lambda}(p) =
\overline{\lambda}(p)=+\infty$ for all $p\ge 2$, and a sufficient condition for \eqref{rd3.20b} is that $\mu(dx) = f(x) dx$ with $f(x) \geq c$, for all $x\in \R$.\\
\end{theorem}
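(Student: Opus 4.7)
The argument splits into two essentially independent parts. For part (1) I will use the moment upper bound \eqref{E:SecMom-Up} together with the pointwise estimate on $\widehat{\calK}_p$ from Proposition \ref{P:UpperBdd-K}; for part (2) I will use the lower bound \eqref{E:SecMom-Lower} together with Proposition \ref{P:LowBdd-K}.

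For part (1), I will first establish the auxiliary statement that $\int|\mu|(\ud y)(1+|y|^\eta)<\infty$ implies \eqref{rd3.20} with $\beta=\min(\eta,1+a)$. Writing $J_0 = \lMr{\delta}{G}{a}(t,\cdot)*\mu$ and combining the two-sided control $\lMr{\delta}{G}{a}(t,z) \le C\,t/(t^{2/a}+z^2)^{(a+1)/2}$ (from the uniform bound involving $\Lambda$ and the stable tail asymptotics) with a splitting of the integral against $|\mu|$ at $|y|=|x|/2$ gives the polynomial decay: on $\{|y|\le |x|/2\}$ the Green factor is of order $t/|x|^{1+a}$ since $|x-y|\ge |x|/2$, while on $\{|y|>|x|/2\}$ Markov's inequality gives $|\mu|(\{|y|>|x|/2\})\le C|x|^{-\eta}$. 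With $\beta$ in hand, I apply \eqref{E:SecMom-Up} and estimate $(J_0^2\star\widehat{\calK}_p)(t,x)$ by the same type of spatial splitting, using the tail of $\lMr{\delta}{G}{a}(t-s,x-y)$ on $\{|y|<|x|/2\}$ and the decay of $J_0^2(s,y)$ on $\{|y|\ge |x|/2\}$. Combining these with the exponential factor $\exp(\widehat{\gamma}_p^{a^*}(t-s))$ from Proposition \ref{P:UpperBdd-K} and the normalization $\int\lMr{\delta}{G}{a}(s,z)\,\ud z =1$ should yield a bound of the form
\[
\Norm{u(t,x)}_p^2 \le Q(t)\,(1+|x|)^{-2\beta}\,e^{\widehat{\gamma}_p^{a^*}t}
\]
for some polynomial $Q$, so that on $\{|x|\ge\exp(\alpha t)\}$ the corresponding exponent becomes negative once $\alpha$ exceeds the announced threshold $\widehat{\gamma}_p^{a^*}/\beta$.

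For part (2), I will treat three cases. When either $\vip\ne 0$ or the hypothesis $J_0\ge c>0$ holds (the latter being a consequence of $f\ge c$ in the absolutely continuous setting), the lower bound \eqref{E:SecMom-Lower} contains a term of the form $(\mathrm{const})\cdot(1\star\underline{\calK})(t,x)$ to which I apply the second inequality of Proposition \ref{P:LowBdd-K}: this gives $(1\star\underline{\calK})(t,x)\ge C\,t^b\,E_{b,b+1}(\Upsilon(\lip_\rho,a,\delta)\,t^b)$, a lower bound independent of $x$ and of order $\exp(\Upsilon^{1/b}t)$ by the Mittag--Leffler asymptotics. Hence $\inf_x\underline{m}_2(x)\ge\Upsilon^{1/b}$, and the inequality $\E|u|^p\ge(\E|u|^2)^{p/2}$ upgrades this to general $p\ge 2$, so $\underline{e}(p)=\overline{e}(p)=+\infty$. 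In the remaining case ($\vip=0$ with only $\mu\ne 0$), I fix $R>0$ with $\mu([-R,R])>0$; continuity and strict positivity of $\lMr{\delta}{G}{a}(s,\cdot)$ then provide $J_0(s,y)\ge c>0$ on a rectangle $[s_0,s_1]\times[-2R,2R]$. Restricting the convolution $(J_0^2\star\underline{\calK})(t,x)$ to this rectangle and substituting the pointwise lower bound from Proposition \ref{P:LowBdd-K}, I use $g_1((t-s)^{1/a},x-y)\ge C(t-s)^{1/a}/(1+x^2)$ valid for $|y|\le 2R$ and $|x|\le\exp(\alpha t)$ when $t$ is large, together with $E_{b,b}(\Upsilon(t-s)^b)\sim c\,e^{\Upsilon^{1/b}(t-s)}$, to deduce
\[
\Norm{u(t,x)}_2^2\ge c_t\,e^{(\Upsilon^{1/b}-2\alpha)t}\qquad\text{for }|x|\le\exp(\alpha t),
\]
which is exponentially large in $t$ as soon as $\alpha<\Upsilon^{1/b}/2$. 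Evaluating at the boundary $x=\exp(\alpha t)$ and applying once more $\E|u|^p\ge(\E|u|^2)^{p/2}$ forces $\sup_{|x|\ge\exp(\alpha t)}(1/t)\log\E|u(t,x)|^p>0$, whence $\underline{e}(p)\ge\Upsilon^{1/b}/2$. The conclusion $\underline{\lambda}(p)=\overline{\lambda}(p)=+\infty$ is automatic once $\underline{e}(p)>0$, since $\{|x|\ge\alpha t\}\supset\{|x|\ge\exp(\alpha' t)\}$ for every $\alpha'>0$ provided $t$ is large.

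I expect the main technical obstacle to lie in part (1): controlling $(J_0^2\star\widehat{\calK}_p)(t,x)$ on the region $|x|\ge\exp(\alpha t)$ requires carefully balancing the heavy-tailed spatial decay of the stable Green function against the exponential temporal growth of $\widehat{\calK}_p$, and tracking the precise dependence on $\beta$ so as to reach the announced threshold. Part (2) is cleaner, with the only delicate point being the verification that the localized lower bound on $J_0$ propagates through the Poisson-type factor $g_1$ and the Mittag--Leffler growth without losing the exponential rate $\Upsilon^{1/b}$.
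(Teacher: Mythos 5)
Your proposal is correct in outline and matches the paper's broad strategy (upper moment bound plus Proposition \ref{P:UpperBdd-K} for part (1); lower moment bound plus Proposition \ref{P:LowBdd-K} for part (2)), but in both parts you pick a technical route that is genuinely different from the one in the paper, and part (1) as you describe it has a small imprecision worth flagging.

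For the core estimate of part (1), the paper does \emph{not} estimate $(J_0^2\star\widehat{\calK}_p)(t,x)$ by a spatial dichotomy at $|y|=|x|/2$. Instead it invokes Lemma \ref{L:InDt}, specifically \eqref{E:Jsq0K}: one factor of $J_0(s,y)$ in $J_0^2$ is bounded by the uniform estimate $A_aK_{a,0}(t\vee 1)^{1+1/a}s^{-1/a}$, the other is kept as $\int\mu(\ud z)\lMr{\delta}{G}{a}(s,y-z)$, and the semigroup property collapses the whole space-time convolution down to a constant times $e^{\widehat{\gamma}_p^{a^*}t}\,J_0(t,x)$. This gives a single power of $(1+|x|)^{-\beta}$. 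Your direct splitting is a workable substitute, but your announced bound $Q(t)(1+|x|)^{-2\beta}e^{\widehat{\gamma}_p^{a^*}t}$ is too optimistic as stated: the contribution from $\{|y|<|x|/2\}$ is governed by the algebraic tail of the stable kernel and decays like $|x|^{-(1+a)}$ (after absorbing the $\int J_0^2(s,\cdot)\ud y$ contribution), so the true outcome of your decomposition is $(1+|x|)^{-\min(2\beta,\,1+a)}$. Since \eqref{rd3.20} only holds with some $\beta\le 1+a$ in the cases of interest, $\min(2\beta,1+a)\ge\beta$, so the theorem's conclusion still follows (and would actually be sharper than \eqref{rd3.20a}); but you should be careful not to lose the single-power case $\beta>(1+a)/2$. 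For the auxiliary implication $\int|\mu|(\ud y)(1+|y|^\eta)<\infty\Rightarrow\eqref{rd3.20}$, your Markov-inequality tail split at $|y|=|x|/2$ is a valid alternative to the paper's Lemma \ref{L:minY}; both yield $\beta=\min(\eta,1+a)$. One further point (a gap you inherit from the theorem statement and proof in the paper): in \eqref{E:SecMom-Up} the term $\Vip^2\star\widehat{\calK}_p=\Vip^2\,\calH_p(t)$ has no spatial decay at all, so part (1) implicitly requires $\Vip=0$; neither you nor the paper addresses this explicitly.

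For part (2), the cases $\vip\neq 0$ or $J_0\ge c$ you handle exactly as the paper does, via the $x$-independent bound $(1\star\underline{\calK})(t,x)\ge Ct^bE_{b,b+1}(\Upsilon t^b)$. For $\vip=0,\ \mu\neq 0$, the paper's route is to use Lemma \ref{L:J0LowB} to get the \emph{global} lower bound $J_0(t,x)\ge\widehat C\,1_{[\epsilon,\infty[}(t)\,g_a(t,\sqrt 2 x)$, and then Lemma \ref{L:ga2g1} to control the convolution $g_a^2(\cdot,\sqrt 2\,\cdot)*g_1(\cdot^{1/a},\cdot)$. You instead localize $J_0\ge c$ on a rectangle $[s_0,s_1]\times[-2R,2R]$ and restrict the convolution there. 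Both approaches propagate the rate $\Upsilon^{1/b}$ through the Poisson factor $g_1$; your version is a little more elementary (no need for Lemmas \ref{L:gaLowB}–\ref{L:ga2g1}) at the modest cost of tracking the $\epsilon$-dependent constants. Your final inference $\underline\lambda(p)=\overline\lambda(p)=+\infty$ from $\underline e(p)>0$ is exactly the paper's.
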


The above two theorems are proved in Section \ref{S:IntGrow}.

\begin{remark} In the case of the classical parabolic Anderson model, in which $a=2$, $\delta =0$ and $\rho(u) = \lambda u$, it was shown in \cite{ChenDalang13Heat} that $\underline{\lambda}(2)
=\overline{\lambda}(2) = \lambda^2/2$ when the initial data has compact support (for instance). Here, it is natural to ask whether $\underline{e}(p) = \overline{e}(p)$ when $\rho(u) = \lambda u$, for
instance for initial data with compact support. This remains an open question.
\end{remark}

\section{Proof of Theorem \ref{T:ExUni}}\label{SS:ExUni}

We need some technical results. The proof of Theorem \ref{T:ExUni} will be presented at the end of this section.

The Green functions defined in \eqref{E:Green} are densities of stable random variables.
Some key properties are stated in the next lemma.  Recall that
a probability density function $f:\R\mapsto\R_+$ is called {\it bell-shaped} if  $f$ is infinitely differentiable and its $k$-th derivative
$f^{(k)}$ has exactly $k$ zeros in its support for all $k$.

\begin{lemma}\label{L:Green}
 For $a\in \;]0,2]$, the following properties hold:
\begin{enumerate}[(i)]
 \item For fixed $t>0$, the function $\lMr{\delta}{G}{a}(t,\cdot)$ is a
bell-shaped density function. In particular, $\int_\R \lMr{\delta}{G}{a}(t,x)
\ud x =1$.
 \item The unique mode is located on the positive semi-axis $x>0$ if  $\delta>0$
and on the negative semi-axis $x<0$ if $\delta<0$ and at $x=0$ if $\delta
=0$.
 \item $\lMr{\delta}{G}{a}(t,x)$ satisfies the semigroup property, i.e., for
$0<s<t$,
\[
\lMr{\delta}{G}{a}(t+s,x) = \int_\R \ud \xi\: \lMr{\delta}{G}{a}(t,\xi)
\lMr{\delta}{G}{a}(s,x-\xi)\:.
\]
\item The scaling property: For all $n\ge 0$,
\begin{align}\label{E:Ga-Scale}
\frac{\partial^n }{\partial x^n} \lMr{\delta}{G}{a}(t,x) = t^{-\frac{n+1}{a}}
\left. \frac{\partial^n }{\partial \xi^n} \lMr{\delta}{G}{a}(1,\xi) \right|_{\xi=
t^{-1/a} x}.
\end{align}
\item When $x\rightarrow\pm \infty$,
\begin{align*}
\lMr{\delta}{G}{a}(1,x) = \frac{1}{\pi}\sum_{j=1}^N |x|^{-a j -1}
\frac{(-1)^{j+1}}{j!}\Gamma(aj+1)\sin\left(j(a\pm\delta)\pi/2\right)
+ O\left(|x|^{-a(N+1)-1}\right).
\end{align*}
\item If $a\in \;]1,2]$, then there exists some finite constants
$K_{a,n}$ such that
\begin{align}
 \left|\lMr{\delta}{G}{a}^{(n)}(1,x)\right|
\label{E:G-bd}
&\le \frac{K_{a,n}}{1+|x|^{1+n+a}},\quad\text{for $n\ge 0$;}
\end{align}
Moreover, for all $T\ge t>0$, $n\ge 0$ and $x\in\R$,
\begin{align}\label{E:Gtx-bd}
 \left|\frac{\partial^n}{\partial x^n} \lMr{\delta}{G}{a}(t,x)\right|\le
t^{-\frac{n+1}{a}} \frac{K_{a,n}}{1+|t^{-1/a}x|^{1+n+a}}
\le  K_{a,n}\: t^{-\frac{n+1}{a}} \frac{(T\vee 1)^{1+\frac{n+1}{a}}}{1+|x|^{1+n+a}}\:.
\end{align}
\item $\lim_{t\rightarrow 0} \lMr{\delta}{G}{a}(t,x) = \delta_0(x)$, where
$\delta_0(x)$ is the Dirac delta function with unit mass at zero.
\end{enumerate}
\end{lemma}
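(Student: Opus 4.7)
My plan is to treat the seven properties essentially in order, since several of them are classical facts about one-dimensional stable densities that I would invoke from the literature, while others follow by direct manipulation of the Fourier representation \eqref{E:Green}.

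First I would record that writing $\lMr{\delta}{\psi}{a}(t^{1/a}\xi)=t\,\lMr{\delta}{\psi}{a}(\xi)$ in the Fourier symbol, followed by the change of variables $\xi\mapsto t^{-1/a}\xi$ in \eqref{E:Green}, immediately gives the scaling identity of (iv) for $n=0$, and differentiating under the integral sign $n$ times gives the general case. The semigroup property (iii) is simply the factorization $\exp((t+s)\psi)=\exp(t\psi)\exp(s\psi)$ combined with the convolution theorem for Fourier transforms. The Dirac delta limit (vii) can then be obtained from (iv) together with $\int_\R G(1,x)\,dx = 1$, which is the standard approximate-identity argument once one knows the density property from (i).

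For (i), (ii) and (v), I would appeal to the theory of strictly stable densities. The density property and unimodality follow from Yamazato's theorem that every stable distribution is unimodal, while the stronger bell-shape claim is due to Gawronski; for the location of the mode I would cite Zolotarev's monograph. The asymptotic expansion (v) is the classical series expansion of stable densities obtained by contour-deforming the inverse-Fourier integral in \eqref{E:Green} for $|x|$ large and picking up the contribution of the essential singularity — I would refer to Zolotarev's treatment rather than redoing it here.

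The one step where some actual work is needed is the uniform bound (vi) and the resulting space-time estimate \eqref{E:Gtx-bd}. The plan is to use (v) to obtain a bound $|G^{(n)}(1,x)|\le C(1+|x|)^{-1-n-a}$ valid for $|x|\ge R$ (some large $R$), and on $|x|\le R$ to use continuity and boundedness of derivatives of $G(1,\cdot)$ (which are themselves inverse Fourier transforms of rapidly decaying integrable functions, since the symbol $|\xi|^n\exp(-|\xi|^a)$ is integrable for $a>1$). Patching the two bounds yields \eqref{E:G-bd}. Finally, \eqref{E:Gtx-bd} follows from \eqref{E:G-bd} by applying the scaling identity from (iv): one has
\[
\left|\tfrac{\partial^n}{\partial x^n}\lMr{\delta}{G}{a}(t,x)\right|
= t^{-\tfrac{n+1}{a}}\left|\lMr{\delta}{G}{a}^{(n)}(1,t^{-1/a}x)\right|
\le t^{-\tfrac{n+1}{a}}\frac{K_{a,n}}{1+|t^{-1/a}x|^{1+n+a}},
\]
and the second inequality in \eqref{E:Gtx-bd} comes from the elementary comparison
$\frac{1}{1+|t^{-1/a}x|^{1+n+a}}\le \frac{(T\vee 1)^{1+(n+1)/a}}{1+|x|^{1+n+a}}$ for $0<t\le T$.

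The main obstacle is the bell-shape assertion in (i); this is not elementary and I would rely on the Gawronski/Yamazato line of results rather than prove it from scratch. Everything else is either a Fourier-side identity, a citation to Zolotarev's asymptotics, or a routine patching/scaling argument to produce the quantitative bounds needed in the sequel.
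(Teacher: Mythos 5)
Your overall approach matches the paper's: items (iii), (iv), (vii) are short Fourier-side computations, items (i), (ii), (v) and the bell-shape are cited from the stable-density literature (Gawronski, Zolotarev, Lukacs), and item (vi) is the only place requiring a real argument, done by scaling plus an elementary pointwise comparison. Your treatment of (vii) via the approximate-identity argument differs cosmetically from the paper, which just notes that the characteristic function $\exp(t\,\lMr{\delta}{\psi}{a}(\xi))\to 1$ as $t\to 0_+$; both are standard.

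There is one genuine gap in your derivation of \eqref{E:G-bd}. You propose to extract the tail bound $|\lMr{\delta}{G}{a}^{(n)}(1,x)|\lesssim |x|^{-1-n-a}$ from the asymptotic expansion (v), but (v) as stated gives the expansion only for $\lMr{\delta}{G}{a}(1,x)$ itself, i.e.\ $n=0$. For $n\ge 1$ you would need a parallel asymptotic expansion of the $n$-th derivative, which does not follow by naively differentiating (v) term-by-term (differentiating an asymptotic series requires separate justification). In fact the leading-order decay rate changes from $|x|^{-1-a}$ to $|x|^{-1-n-a}$, so this is not a formality. The paper avoids the issue by citing \cite[Corollary 1]{DebbiDozzi05On}, which establishes the derivative bound directly (essentially by re-running the contour deformation for each $\partial_x^n$, whose symbol acquires an extra factor $(i\xi)^n$). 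You should either cite that result or carry out the contour argument at the level of $\partial_x^n \lMr{\delta}{G}{a}(1,\cdot)$; the patching with the bounded region $|x|\le R$ and the scaling step to \eqref{E:Gtx-bd} are then fine as you wrote them, and your elementary comparison in the last display is exactly the monotonicity-of-$s\mapsto s/(s+z)$ argument the paper uses.
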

\begin{proof}
Most of these properties appear in several books \cite{Zolotarev86,
UchaikinZolotarev99, Lukacs70}. We refer the
interested readers to \cite[Lemma 1]{Debbi06Explicit} for Properties (i) (except
the bell-shaped density), (iii) and (iv). Formula (v) can be find in \cite[(5.9.3), Sec. 5.9]{Lukacs70}.
The proof that the density is bell-shaped
is due to Gawronski \cite{Gawronski84Bell}.
Property (ii) can be found in the summary part of \cite[Section
2.7, p. 143--147]{Zolotarev86}.

Now we prove (vi).
Property \eqref{E:G-bd} follows from \cite[Corollary 1]{DebbiDozzi05On}.
By the scaling property \eqref{E:Ga-Scale} and \eqref{E:G-bd},
\begin{align*}
\left|\frac{\partial^n }{\partial x^n} \lMr{\delta}{G}{a}(t,x)\right| &\le t^{-\frac{n+1}{a}} \frac{K_{a,n}}{1+|t^{-1/a}x|^{1+n+a}} = t^{-\frac{n+1}{a}}\frac{K_{a,n} \:
t^{1+\frac{n+1}{a}}}{t^{1+\frac{n+1}{a}}+|x|^{1+n+a}}.
\end{align*}
Then using the fact that the function $t\mapsto \frac{t}{t+z}$ is monotone increasing on $\R_+$, the above quantity is less than
\begin{align*}
t^{-\frac{n+1}{a}}\frac{K_{a,n} \: (T\vee 1)^{1+\frac{n+1}{a}}}{(T\vee 1)^{1+\frac{n+1}{a}}+|x|^{1+n+a}}
\le  t^{-\frac{n+1}{a}}\frac{K_{a,n} \: (T\vee 1)^{1+\frac{n+1}{a}}}{1+|x|^{1+n+a}}\:.
\end{align*}
This proves \eqref{E:Gtx-bd}.

Property (vii) follows easily by taking Fourier transforms $\calF(\lMr{\delta}{G}{a}(t,\cdot))(\xi) =
\exp\left(\lMr{\delta}{\psi}{a}(\xi)t \right) \rightarrow 1$ as $t\rightarrow
0_+$.
This completes the proof of Lemma \ref{L:Green}.
\end{proof}
%


Let $\calL_n(t,x;\lambda)$ and $\calK(t,x;\lambda)$, and
$\Lambda=\lMr{\delta}{\Lambda}{a}$ be defined in \eqref{E:Ln},
\eqref{E:K}, and \eqref{E:Cst-dLa}, respectively.
Recall that $1/a+1/a^*=1$.

\begin{lemma}[Theorem 1.3, p. 32 in \cite{Podlubny99FDE}]\label{L:Eab}
If $0<\alpha<2$, $\beta$ is an arbitrary complex number and $\mu$ is an
arbitrary real number such that
\[
\pi\alpha/2<\mu<\pi \wedge (\pi\alpha)\;,
\]
then for an arbitrary integer $p\ge 1$ the following expression holds:
\[
E_{\alpha,\beta}(z) = \frac{1}{\alpha} z^{(1-\beta)/\alpha}
\exp\left(z^{1/\alpha}\right)
-\sum_{k=1}^p \frac{z^{-k}}{\Gamma(\beta-\alpha k)} + O\left(|z|^{-1-p}\right)
,\quad |z|\rightarrow\infty,\quad |\arg(z)|\le \mu\:.
\]
\end{lemma}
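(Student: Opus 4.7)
The plan is to use the classical Hankel-contour argument. The starting point is the integral representation
\[
\frac{1}{\Gamma(s)} = \frac{1}{2\pi i} \int_{Ha} e^\zeta \zeta^{-s}\, d\zeta, \qquad s\in\mathbb{C},
\]
for the reciprocal Gamma function, where $Ha$ denotes a Hankel contour coming from $-\infty$ below the real axis, looping counterclockwise about the origin, and returning to $-\infty$ above. Substituting into the defining series \eqref{E:Mittag-Leffler} and interchanging sum and integral (permissible once $Ha$ is deformed so that $|\zeta|\ge r>|z|^{1/\alpha}$ everywhere, making $|z/\zeta^\alpha|<1$ uniformly), I would sum the resulting geometric series and obtain
\[
E_{\alpha,\beta}(z) = \frac{1}{2\pi i} \int_{Ha_r} \frac{e^\zeta\,\zeta^{\alpha-\beta}}{\zeta^\alpha-z}\, d\zeta.
\]

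Second, I would collapse $Ha_r$ onto a smaller Hankel contour $Ha_\varepsilon$ with $\varepsilon<|z|^{1/\alpha}$, picking up residues of the integrand at the roots of $\zeta^\alpha=z$. The hypothesis $\pi\alpha/2<\mu<\pi\wedge(\pi\alpha)$ is exactly the condition that guarantees that, for $z$ in the sector $|\arg z|\le\mu$, the only root lying in the annular region swept out by the deformation is the principal root $\zeta_0=z^{1/\alpha}$; all other roots $z^{1/\alpha}e^{2\pi ij/\alpha}$, $j\ne 0$, remain outside. A direct computation using $(\zeta^\alpha-z)'=\alpha\zeta^{\alpha-1}$ shows that the residue at $\zeta_0$ equals $\frac{1}{\alpha}z^{(1-\beta)/\alpha}e^{z^{1/\alpha}}$, so
\[
E_{\alpha,\beta}(z) = \frac{1}{\alpha}\,z^{(1-\beta)/\alpha}\exp\!\left(z^{1/\alpha}\right) + \frac{1}{2\pi i}\int_{Ha_\varepsilon}\frac{e^\zeta\,\zeta^{\alpha-\beta}}{\zeta^\alpha-z}\,d\zeta.
\]

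Third, on $Ha_\varepsilon$ one has $|\zeta^\alpha/z|<1$, so I would expand
\[
\frac{1}{\zeta^\alpha-z} = -\sum_{k=0}^{p-1}\frac{\zeta^{\alpha k}}{z^{k+1}} - \frac{\zeta^{\alpha p}}{z^p(\zeta^\alpha-z)},
\]
multiply through by $e^\zeta\zeta^{\alpha-\beta}$, and integrate term by term. The Hankel formula applied with $s=\beta-\alpha k$ identifies the $k$-th summand as $-z^{-k}/\Gamma(\beta-\alpha k)$, producing the algebraic expansion claimed in the lemma. The remainder integral is bounded by $O(|z|^{-p-1})$ using $|\zeta^\alpha-z|\gtrsim|z|$ on $Ha_\varepsilon$ together with the integrability of $e^{\operatorname{Re}\zeta}|\zeta|^{\alpha p+\alpha-\beta}$ along the contour, and this bound is uniform over $|\arg z|\le\mu$ since $Ha_\varepsilon$ can be chosen consistently across the sector.

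The principal obstacle is the geometric bookkeeping in the second step: which roots of $\zeta^\alpha=z$ get enclosed during the contour deformation depends sensitively on $\mu$, $\alpha$, and the branch of $z^{1/\alpha}$. The lower bound $\mu>\pi\alpha/2$ ensures that $\zeta_0$ actually lies in the deformation region, while the upper bound $\mu<\pi\alpha$ keeps the neighboring roots $\zeta_{\pm 1}=z^{1/\alpha}e^{\pm 2\pi i/\alpha}$ out; these two constraints are compatible precisely when $0<\alpha<2$, which is the hypothesis of the lemma.
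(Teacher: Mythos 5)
The paper does not prove this lemma: it is quoted verbatim as Theorem 1.3 of Podlubny \cite{Podlubny99FDE}, and your Hankel-contour sketch reproduces that book's proof, so you are following essentially the same route as the cited source. Two small inaccuracies in the bookkeeping are worth noting, though neither breaks the argument. First, the claim that $|\zeta^\alpha/z|<1$ on all of $Ha_\varepsilon$ fails on the two unbounded rays of the contour (where $|\zeta|\to\infty$); this is harmless because the finite expansion you write is an exact algebraic identity valid whenever $\zeta^\alpha\neq z$, and the required estimate $|\zeta^\alpha - z|\gtrsim |z|$ on the rays instead comes from angular separation: choosing the ray angle $\theta$ of the Hankel contour so that $\alpha\theta>\mu$ (possible since $\mu<\pi\alpha$, together with $\theta>\pi/2$ for convergence) keeps $\arg(\zeta^\alpha)=\pm\alpha\theta$ bounded away from $\arg z\in[-\mu,\mu]$. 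Second, your reading of the two constraints on $\mu$ is off: the condition that places $\zeta_0=z^{1/\alpha}$ inside the deformation region is again the \emph{upper} bound $\mu<\pi\alpha$ (which gives $|\arg\zeta_0|\le\mu/\alpha<\pi$ and lets $\theta$ be chosen above $\mu/\alpha$), while the lower bound $\mu>\pi\alpha/2$ merely positions the sector boundary so that the companion expansion without the exponential term (Podlubny's Theorem 1.4, valid for $\mu\le|\arg z|\le\pi$) covers the rest of the plane. Also, the other preimages $z^{1/\alpha}e^{\pm 2\pi i/\alpha}$ you worry about do not lie on the slit plane $\mathbb{C}\setminus(-\infty,0]$ on which the integrand $e^\zeta\zeta^{\alpha-\beta}/(\zeta^\alpha-z)$ is defined, so there is nothing there to exclude: for $|\arg z|<\pi\alpha$ the integrand has exactly one pole, at $\zeta_0$.
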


\begin{proposition}\label{P:K}
 For $1<a\le 2$, $|\delta|\le 2-a$ and $\lambda>0$, we have the following
properties:
\begin{enumerate}[(i)]
 \item $\calL_n(t,x;\lambda)$ is non-negative and for all $n\ge 0$ and $(t,x)\in \R_+^*\times\R$,
\begin{align}\label{E:K-i-A}
 \calL_n(t,x;\lambda) \le \:B_{n+1}(t;\lambda) \lMr{\delta}{G}{a}(t,x)
\;,
\end{align}
where
\[
B_n\left(t;\lambda\right):=
\lambda^{2n} \Lambda^{n}
\frac{\Gamma\left(1/a^*\right)^{n}}
{\Gamma\left(n/a^*\right) }\: t^{n/a^*-1}\qquad
(n\ge 0,\;\lambda\in\R).
\]
\item For all $t>0$ and $\lambda>0$, the series $\sum_{n=1}^\infty
\calL_n(t,x;\lambda) $ converges uniformly over
$x\in\R$ and hence $\calK(t,x;\lambda)$ in \eqref{E:K} is well defined.
\item $B_n\left(t\;;\lambda\right)\ge 0$ and for all
$m\in\bbN^*$, $\sum_{n=0}^\infty
B_n\left(t\;;\lambda\right)^{1/m}<+\infty$.
\end{enumerate}
\end{proposition}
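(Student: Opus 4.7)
The plan is to prove (i) by induction on $n$, and then derive (ii) and (iii) as straightforward consequences. The engine of the induction is the semigroup property of the Green function (Lemma \ref{L:Green}(iii)) combined with the uniform bound $\lMr{\delta}{G}{a}(t,x)\le \Lambda\, t^{-1/a}$, which follows from the scaling property \eqref{E:Ga-Scale} (with $n=0$) together with the definition \eqref{E:Cst-dLa} of $\Lambda$. The base case $n=0$ is then immediate: $\calL_0(t,x;\lambda) = \lambda^2\, \lMr{\delta}{G}{a}(t,x)^2 \le \lambda^2 \Lambda\, t^{-1/a}\, \lMr{\delta}{G}{a}(t,x) = B_1(t;\lambda)\, \lMr{\delta}{G}{a}(t,x)$, using $1/a^* - 1 = -1/a$.

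For the induction step from $n-1$ to $n$, I would apply the recursion $\calL_n = \calL_0 \star \calL_{n-1}$ from \eqref{E:Ln}, insert the bound on $\calL_0$ and the induction hypothesis for $\calL_{n-1}$, to obtain
\begin{align*}
\calL_n(t,x;\lambda) \le \lambda^2 \Lambda \int_0^t ds\,(t-s)^{-1/a}\, B_n(s;\lambda) \int_\R dy\, \lMr{\delta}{G}{a}(t-s,x-y)\, \lMr{\delta}{G}{a}(s,y).
\end{align*}
The inner integral collapses to $\lMr{\delta}{G}{a}(t,x)$ by the semigroup property. The remaining integral in $s$ is a Beta integral with parameters $\alpha = 1 - 1/a = 1/a^*$ and $\beta = n/a^*$, and it evaluates to $t^{(n+1)/a^* - 1}\, \Gamma(1/a^*)\Gamma(n/a^*)/\Gamma((n+1)/a^*)$. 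The factor $\Gamma(n/a^*)$ cancels the one in the denominator of $B_n$, producing exactly $B_{n+1}(t;\lambda)\, \lMr{\delta}{G}{a}(t,x)$, closing the induction.

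For (ii), combining (i) with the uniform bound on $\lMr{\delta}{G}{a}$ dominates $\sum_n \calL_n(t,x;\lambda)$ uniformly in $x$ by $\Lambda\, t^{-1/a}\sum_n B_{n+1}(t;\lambda)$. Writing $\gamma = \lambda^2 \Lambda \Gamma(1/a^*)$ as in \eqref{E:D-gamma}, the numerical series is recognized as $\gamma\, t^{-1/a}\, E_{1/a^*,1/a^*}(\gamma\, t^{1/a^*})$, which converges for every $t>0$ since $E_{1/a^*,1/a^*}$ is an entire function (this observation will in fact deliver \eqref{E:UpBd-K-Mit} of Proposition \ref{P:UpperBdd-K} for free). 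For (iii), nonnegativity of $B_n(t;\lambda)$ is manifest from the formula, and the summability of $B_n(t;\lambda)^{1/m}$ follows from Stirling's approximation: $\Gamma(n/a^*)^{1/m}$ grows faster than any exponential in $n$ (recall $a^*\ge 2$), so the ratio test disposes of the series for any fixed $t$, $\lambda$, $m$.

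The only real obstacle is the bookkeeping in the induction step: one must match the exponent $-1/a$ in $(t-s)^{-1/a}$ with $\alpha - 1$ of the Beta integrand, invoke $1/a + 1/a^* = 1$ at the right moment, and verify that the Gamma factors telescope cleanly against those in $B_n$ to yield the exact normalization in $B_{n+1}$. Once these constants are tracked correctly, parts (ii) and (iii) are essentially immediate from Mittag-Leffler-type asymptotics.
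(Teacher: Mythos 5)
Your proposal is correct and follows essentially the same route as the paper: the base case from the scaling bound $\lMr{\delta}{G}{a}(t,x)\le \Lambda t^{-1/a}$, the induction step via the semigroup property and the Euler Beta integral (with the Gamma factors telescoping into $B_{n+1}$), and the ratio test (via Stirling) for the summability of $B_n^{1/m}$. The only cosmetic difference is that you derive (ii) directly from recognizing the majorizing series as a Mittag-Leffler function, whereas the paper deduces (ii) from (iii); both are immediate once (i) and the Gamma-ratio asymptotics are in hand.
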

\begin{proof}

(i) Non-negativity is clear. The scaling property \eqref{E:Ga-Scale} and the definition of $\Lambda$ in
\eqref{E:Cst-dLa} imply that
\begin{align}\label{E5_:Ga0bd-A}
\lMr{\delta}{G}{a}(t,x)\le t^{-1/a} \Lambda\;,
\end{align}
which establishes the case $n=0$ in \eqref{E:K-i-A}.
Suppose that the relation \eqref{E:K-i-A} holds up to $n-1$. Then by
\eqref{E5_:Ga0bd-A}, we have
\begin{align*}
\calL_n(t,x;\lambda) =& \int_0^t \ud s \int_\R \ud y\:
\calL_{n-1}\left(t-s,x-y\right)
\lambda^2
\lMr{\delta}{G}{a}^2\left(s,y\right)\\
\le&
\lambda^{2(n+1)} \Lambda^{n+1}
\frac{\Gamma\left(1/a^*\right)^n}{\Gamma\left(n(1/a^*)\right)}
\int_0^t\ud s\: (t-s)^{n(1/a^*)-1}  s^{-1/a}\\
&\qquad\times \int_\R\ud y\:
\lMr{\delta}{G}{a}\left(t-s,x-y\right)
\lMr{\delta}{G}{a}\left(s,y\right).
\end{align*}
The conclusion now follows from the semigroup property of
$\lMr{\delta}{G}{a}(t,x)$ and Euler's Beta integral (see \cite[5.12.1, on p.
142]{NIST2010})
\begin{align} \label{E:BI}
\int_0^t\ud s \: s^{a-1}(t-s)^{b-1}=
\frac{\Gamma(a)\Gamma(b)}{\Gamma(a+b)}\, t^{a+b-1},\qquad \text{with $\Re(a)>0$ and
$\Re(b)>0$.}
\end{align}

(ii) This is a consequence of (iii). As for (iii), the non-negativity is clear.
By \eqref{E:K-i-A} and \eqref{E5_:Ga0bd-A},
\[
\calL_n(t,x;\lambda)\le B_{n+1}\left(t\;;\lambda\right)
t^{-1/a}\Lambda\;.
\]
Thus, if the series  $\sum_n B_n\left(t\;;\lambda\right)^{1/m}$
converges, then $\calL_n$ does so uniformly over $x\in\R$.
Denote $\beta :=1/a^*$.
We use the ratio test:
\[
\left(\frac{B_n\left(t\;;\lambda\right)}{B_{n-1}\left(t\;;\lambda\right)}
\right)^{1/m} =
\left(\lambda^2 \Lambda
\Gamma\left(\beta\right) t^\beta\right)^{1/m}
\left(
\frac{\Gamma\left((n-1)/a^*\right)}
{\Gamma(n/a^*)}
\right)^{1/m} \;.
\]
By the asymptotic expansion of the Gamma function (\cite[5.11.2, in p.
140]{NIST2010}),
\[
\frac{\Gamma\left((n-1)/a^*\right)}{\Gamma\left(n/a^* \right) }
\approx \left(\frac{e}{\beta}\right)^\beta
\left(1-\frac{1}{n}\right)^{(n-1)\beta} \frac{1}{n^\beta}
\approx \frac{1}{(\beta n)^\beta}\;,
\]
for large $n$. Clearly, $\beta >0$ since $1/a<1$. Hence for all $t>0$,
for
large $n$,
\[
\left(
\frac{B_n\left(t\;;\lambda\right)}
{B_{n-1}\left(t\;;\lambda\right)}
\right)^{1/m}
\approx
\left(\lambda^2\Lambda
\Gamma\left(\beta\right) t^\beta\right)^{1/m} \frac{1}{(\beta n)^{\beta/m}},
\]
and this goes to zero as $n\rightarrow+\infty$.
This completes the proof of Proposition \ref{P:K}.
\end{proof}

\begin{proof}[Proof of Proposition \ref{P:UpperBdd-K}]
The bound \eqref{E:UpBd-K-Mit} follows from the fact that
\begin{align}\label{E:Efrom1}
\sum_{k=1}^{\infty} \frac{z^k}{\Gamma(\alpha k)}
= z E_{\alpha,\alpha}(z)\;,
\end{align}
which can be easily seen from the definition, and the bound in Proposition \ref{P:K} (i):
\begin{align*}
\calK\left(t,x;\lambda\right)
&\le  \lMr{\delta}{G}{a}(t,x) \sum_{n=1}^\infty
B_{n}\left(t;\lambda\right)
=
\frac{1}{t}\lMr{\delta}{G}{a}(t,x) \sum_{n=1}^\infty
\frac{\left(\lambda^2\Lambda
\Gamma(1/a^*)\;t^{1/a^*}\right)^n}{\Gamma(n/a^*)}\\
&=
\lambda^2\Lambda \Gamma(1/a^*) t^{-1/a}
\lMr{\delta}{G}{a}(t,x)E_{1/a^*,1/a^*}\left(\lambda^2 \Lambda
\Gamma(1/a^*)
t^{1/a^*}\right).
\end{align*}
As for \eqref{E:UpBd-K}, we only need to show that the constant $C$ defined
in \eqref{E:Cst-UpK} is finite.
Let
\[
f(t) = \frac{E_{1/a^*,1/a^*}\left(\gamma\:
t^{1/a^*}\right)}{1+t^{1/a}\exp\left(\gamma^{a^*}\;
t\right)}\;.
\]
By Lemma \ref{L:Eab} with the real non-negative
value $z= \gamma\; t^{1/a^*}$ and $p=1$,
\[
\gamma E_{1/a^*,1/a^*}\left(\gamma\; t^{1/a^*}\right)\le
a^*\; \gamma^{a^*}\; t^{1/a}
\exp\left(\gamma^{a^*}\; t\right) +
O\left(\frac{1}{|t|^{2/a^*}}\right)
\:,\qquad t\rightarrow +\infty\:,
\]
where we have used the fact that $1/\Gamma(0)=0$,
we see that
\[
\lim_{t\rightarrow+\infty} f(t) \le a^*
\gamma^{a^*}\;.
\]
Since $E_{\alpha,\alpha}(\cdot)$ is continuous (by uniform convergence of the series in \eqref{E:Mittag-Leffler}),
we conclude that $\sup_{t\ge 0} f(t)<+\infty$.
This completes the proof of Proposition \ref{P:UpperBdd-K}.
\end{proof}

The next proposition is in principle a consequence of certain calculations in \cite{DebbiDozzi05On}. It is however not stated explicitly there,
so we include a proof for the convenience of the reader.

\begin{proposition}\label{P:G}
Fix $1<a\le 2$, $|\delta|\le 2-a$ and $1/a+1/a^*=1$.
There are three universal constants
\begin{align*}
C_1:=   \int_\R\frac{1-\cos(u)}{2 \pi \cos(\pi\delta/2) \vert u\vert^a}\,  \ud u\;,\quad
C_3:=\frac{a^* \Gamma(1+1/a)}{\pi\cos(2^{1/a}\pi\delta/2)^{1/a}}\;,
\quad C_2:= \left(2^{1/a^*}-1\right) C_3\;,
\end{align*}
such that
\begin{itemize}
 \item[(i)] for all $t>0$ and $x,y\in\R$,
\begin{align}\label{E:G-x}
 \int_0^t\ud r\int_\R \ud z
\left[\lMr{\delta}{G}{a}(t-r,x-z)-\lMr{\delta}{G}{a}(t-r,y-z)\right]^2
\le C_1 |x-y|^{a-1}\;;
\end{align}
 \item[(ii)] for all $s,t\in\R_+^*$ with $s\le t$, and $x\in\R$,
\begin{align}\label{E:G-t1}
 \int_0^s\ud r\int_\R \ud z
\left[\lMr{\delta}{G}{a}(t-r,x-z)-\lMr{\delta}{G}{a}(s-r,x-z)\right]^2
& \le C_2 (t-s)^{1-1/a}
\end{align}
and
\begin{align}\label{E:G-t2}
\int_s^t\ud r\int_\R \ud z \left[\lMr{\delta}{G}{a}(t-r,x-z)\right]^2
& \le C_3 (t-s)^{1-1/a}\;.
\end{align}
\end{itemize}
\end{proposition}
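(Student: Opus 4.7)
\textbf{Plan for Proposition \ref{P:G}.} All three estimates rest on Plancherel's identity in the spatial variable, using that the Fourier transform of $\lMr{\delta}{G}{a}(r,\cdot)$ at $\xi$ equals $\exp(r\,\lMr{\delta}{\psi}{a}(\xi))$. Since $\Re\,\lMr{\delta}{\psi}{a}(\xi) = -|\xi|^a\cos(\delta\pi/2)$ and the hypothesis $|\delta|\le 2-a$ with $a>1$ forces $|\delta|<1$, the constant $\alpha := \cos(\delta\pi/2)$ is strictly positive.

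For (i), Plancherel in $z$ combined with $|e^{i\xi x}-e^{i\xi y}|^2 = 2(1-\cos(\xi(x-y)))$ rewrites the double integral as $\tfrac{1}{\pi}\int_0^t dr\int_\R e^{-2r\alpha|\xi|^a}(1-\cos(\xi(x-y)))\,d\xi$. Fubini and the bound $\int_0^t e^{-2r\alpha|\xi|^a}\,dr\le (2\alpha|\xi|^a)^{-1}$ reduce the expression to $\tfrac{1}{2\pi\alpha}\int_\R |\xi|^{-a}(1-\cos(\xi(x-y)))\,d\xi$, after which the substitution $u=\xi(x-y)$ extracts $|x-y|^{a-1}$ and matches the definition of $C_1$. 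For (iii), Plancherel gives $\int_\R \lMr{\delta}{G}{a}(r,z)^2\,dz = \tfrac{1}{2\pi}\int_\R e^{-2r\alpha|\xi|^a}\,d\xi$; the substitution $v=2r\alpha|\xi|^a$ evaluates this to $c_* r^{-1/a}$ with $c_* = \Gamma(1+1/a)/(\pi(2\alpha)^{1/a})$, and integrating over $r\in[s,t]$ gives exactly $c_* a^*(t-s)^{1-1/a}$, identifying $C_3$.

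For (ii), the Fourier semigroup identity gives (at frequency $\xi$) the factorization $e^{(s-r)\lMr{\delta}{\psi}{a}(\xi)}[e^{(t-s)\lMr{\delta}{\psi}{a}(\xi)}-1]$ for the Fourier transform of the difference $\lMr{\delta}{G}{a}(t-r,\cdot) - \lMr{\delta}{G}{a}(s-r,\cdot)$. Plancherel followed by $\int_0^s e^{-2(s-r)\alpha|\xi|^a}\,dr \le (2\alpha|\xi|^a)^{-1}$ bounds the left-hand side by $\tfrac{1}{4\pi\alpha}\int_\R |\xi|^{-a}\,|e^{(t-s)\lMr{\delta}{\psi}{a}(\xi)}-1|^2\,d\xi$. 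Rescaling $\eta=(t-s)^{1/a}\xi$, under which $(t-s)\,\lMr{\delta}{\psi}{a}(\xi) = \lMr{\delta}{\psi}{a}(\eta)$, extracts the prefactor $(t-s)^{1-1/a}$ and leaves a universal integral in $\eta$ that is convergent: the integrand is $O(|\eta|^a)$ near $0$ (by $|e^z-1|\le|z|$ whenever $\Re z\le 0$) and $O(|\eta|^{-a})$ at infinity (integrable since $a>1$).

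The main obstacle is matching this universal integral with the specific constant $C_2$. One expands $|e^{\lMr{\delta}{\psi}{a}(\eta)}-1|^2 = 1-2e^{-\alpha|\eta|^a}\cos(\beta\,\sgn(\eta)\,|\eta|^a)+e^{-2\alpha|\eta|^a}$ (with $\beta=\sin(\delta\pi/2)$), uses the identity $1-2e^{-A}\cos B+e^{-2A} = 2(1-e^{-A}\cos B) - (1-e^{-2A})$, and applies the Frullani-type formula $\int_0^\infty u^{s-1}(1-e^{-\lambda u})\,du = -\Gamma(s+1)\lambda^{-s}/s$ (valid for $s\in(-1,0)$ and $\Re\lambda>0$, proved by one integration by parts) with $s=1/a-1$ and $\lambda\in\{2\alpha,\,\alpha-i\beta = e^{-i\delta\pi/2}\}$. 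Taking real parts of the complex case produces the angular factor $\cos(\delta\pi/(2a^*))$, and combining terms yields $C_2$. In the symmetric case $\delta=0$, a cleaner route uses Lemma \ref{L:Green}(iii) to obtain $\int_\R \lMr{\delta}{G}{a}(r_1,z)\lMr{\delta}{G}{a}(r_2,z)\,dz = \lMr{\delta}{G}{a}(r_1+r_2,0)$, so the left-hand side of (ii) becomes an explicit linear combination of $t^{1-1/a}$, $s^{1-1/a}$, $(t-s)^{1-1/a}$, and $(t+s)^{1-1/a}$; the required bound then reduces to the subadditivity of $r\mapsto r^{1-1/a}$ on $[0,\infty)$.
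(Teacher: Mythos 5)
For parts (i) and (iii) your plan is essentially the paper's proof: Plancherel, then either direct time integration and the substitution $u=\xi(x-y)$ (part (i)) or the Gamma-integral formula $\int_\R e^{-z|\xi|^a}\,d\xi = 2z^{-1/a}\Gamma(1+1/a)$ and integration over $[s,t]$ (part (iii)). No substantive differences there.

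For part (ii) you take a genuinely different route. The paper expands the squared modulus of the Fourier-side difference directly into three terms $e^{-2A_{r,t}}+e^{-2A_{r,s}}-2e^{-(A_{r,t}+A_{r,s})}\cos(B_{r,t}-B_{r,s})$, integrates each in $\xi$ via the Gamma-integral formula (including a complex-argument bound $|\Re(z^c)|\le|\Re z|^c$ for the cross term), and then controls the $r$-integral of the resulting three power terms through a bespoke elementary inequality (Lemma~\ref{L:TimeIncr}). You instead factor the Fourier symbol as $e^{(s-r)\psi}(e^{(t-s)\psi}-1)$, bound $\int_0^s e^{-2(s-r)\alpha|\xi|^a}\,dr\le (2\alpha|\xi|^a)^{-1}$, and pull out the scaling factor $(t-s)^{1-1/a}$ via $\eta=(t-s)^{1/a}\xi$ (using $(t-s)\psi(\xi)=\psi(\eta)$). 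This is valid and arguably more transparent: it produces the exponent $1-1/a$ purely from scaling, the resulting universal integral $\int_\R |\eta|^{-a}|e^{\psi(\eta)}-1|^2\,d\eta$ is indeed convergent for $a>1$ by the two estimates you give, and your Frullani computation (valid for $\Re\lambda>0$ and $s'=1/a-1\in(-1,0)$, including the complex $\lambda=e^{-i\delta\pi/2}$ by analytic continuation) does evaluate it in closed form.

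One thing to be aware of: the constant that emerges from your route, namely
$$
\frac{a^*\Gamma(1+1/a)}{2\pi\cos(\delta\pi/2)}\Bigl[\,2\cos\bigl(\tfrac{\delta\pi}{2a^*}\bigr)-\bigl(2\cos(\tfrac{\delta\pi}{2})\bigr)^{1/a^*}\Bigr],
$$
is \emph{not} the stated $C_2=(2^{1/a^*}-1)C_3$. The two bounds come from using different one-sided inequalities (your uniform-in-$s$ bound on the Laplace-type time integral vs.\ the paper's exact three-term integral controlled by Lemma~\ref{L:TimeIncr}), so the resulting closed forms do not coincide as functions of $a,\delta$; you flag the matching as an ``obstacle,'' but it actually cannot be made to match. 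This is harmless for the theorem's use downstream, where only finiteness and the power $(t-s)^{1-1/a}$ matter, but if you want to reproduce the constant $C_2$ as stated you need the paper's expansion-plus-Lemma~\ref{L:TimeIncr} argument rather than the factor-and-rescale shortcut. Your $\delta=0$ alternative via the semigroup identity is a nice observation, but it does not extend to $\delta\ne 0$, which is the case the paper needs.
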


\begin{remark}
This proposition is a generalization of
\cite[Proposition \myRef{3.5}{P2:G}]{ChenDalang13Heat} for the heat equation.
In fact, if we take $a=2$ and
$\delta=0$, then $\lMr{\delta}{G}{a}(t,x) = G_2(t,x) = \frac{1}{\sqrt{4\pi
t}}\exp\left(-\frac{x^2}{4t}\right)$.
Let $C'_i$, $i=1,2,3$, be the optimal constants in \cite[Proposition
\myRef{3.5}{P2:G}]{ChenDalang13Heat} with
$\nu=2$. Then we have the following relation:
\[
C_1'=C_1 =\frac{1}{2}\;, \qquad C_2' = C_2=\frac{\sqrt{2}-1}{\sqrt{\pi}},\qquad C_3'=C_3=\frac{1}{\sqrt{\pi}}\;,
\]
where for $C_1$, we use the fact that $\int_\R\frac{1-\cos(u)}{u^2}\ud u=\int_\R\frac{\sin(u)}{u}\ud u=\pi$; see \cite[4.26.12, on p. 122]{NIST2010} for the last integral.
\end{remark}

\begin{proof}[Proof of Proposition \ref{P:G}]
(i) Note that
\[
\calF (\lMr{\delta}{G}{a}(t,\cdot)) (\xi) := \int_\R \ud x\: e^{-i\xi x}
\lMr{\delta}{G}{a}(t,x)
= \exp\left\{t\: \lMr{\delta}{\psi}{a}(\xi) \right\}
=
\exp\left\{- t |\xi|^a e^{- i \delta \pi \sgn(\xi)/2} \right\}.
\]
 By Plancherel's theorem, 
the left hand side of
\eqref{E:G-x} equals
\begin{align*}
\frac{1}{2\pi}\int_0^t\ud r\int_\R\ud \xi &\left|
e^{-i \xi x-(t-r)|\xi|^a e^{- i \delta \pi \sgn(\xi)/2}}
-
e^{-i \xi y-(t-r)|\xi|^a e^{- i \delta \pi \sgn(\xi)/2}}
\right|^2
 \\
& =
\frac{1}{2\pi}\int_0^t \ud r\int_\R \ud \xi
\: e^{-2(t-r)|\xi|^a\cos(\pi\delta/2)}
\left|e^{-i\xi x}-e^{-i\xi y}\right|^2\\
&=\frac{1}{\pi} \int_0^t \ud r\int_\R \ud \xi \:
e^{-2(t-r)|\xi|^a\cos(\pi\delta/2)}
\left(1-\cos(\xi(x-y))\right).
\end{align*}
After integrating over $r$,
the above integral equals
\[
\frac{1}{\pi}\int_\R \ud \xi\: \frac{1-e^{-2
t|\xi|^a\cos(\pi\delta/2)}}{2\cos(\pi\delta/2)|\xi|^a}
\left(1-\cos(\xi(x-y))\right).
\]
Use the change of variables $\xi = u/(x-y)$ to see that this is equal to
\[
  \frac{1}{\pi} |x-y|^{a-1} \int_\R \ud u\, \frac{1-\exp(-2t\vert u\vert^a \cos(\pi\delta/2) /\vert x-y\vert^a)}{2 \cos(\pi\delta/2) \vert u\vert^a} (1-\cos(u))
	  \leq C_1'\, |x-y|^{a-1},
\]
where
\[
   C_1' = \int_\R\frac{1-\cos(u)}{2 \pi \cos(\pi\delta/2) \vert u\vert^a}\,  \ud u .
\]
This proves \eqref{E:G-x}.



(ii) Denote the left hand side of \eqref{E:G-t1} by $I$. Apply Plancherel's
theorem for $I$:
\begin{align*}
 I=&\frac{1}{2\pi}\int_0^s\ud r\int_\R \ud \xi \left|
e^{-i\xi x-(t-r)|\xi|^a e^{-i\delta \pi \: \sgn(\xi)/2}}
-
e^{-i\xi x-(s-r)|\xi|^a e^{-i\delta \pi \: \sgn(\xi)/2}}
\right|^2 \\
=&
\frac{1}{2\pi}
\int_0^s\ud r\int_\R \ud \xi \left|
e^{-(t-r)|\xi|^a e^{-i\delta \pi \: \sgn(\xi)/2}}
-
e^{-(s-r)|\xi|^a e^{-i\delta \pi \: \sgn(\xi)/2}}
\right|^2
\end{align*}
Denote $\beta:=\pi\delta \sgn(\xi)/2$ and
\begin{align*}
 A_{r,t}&:= (t-r) |\xi|^a \cos(\beta),\qquad B_{r,t}:= (t-r) |\xi|^a \sin(\beta)\;.
\end{align*}
Then
\begin{align*}
 &\left|
e^{-(t-r)|\xi|^a e^{-i\delta \pi \: \sgn(\xi)/2}}
-
e^{-(s-r)|\xi|^a e^{-i\delta \pi \: \sgn(\xi)/2}}
\right|^2 \\
&\qquad= \left|e^{-A_{r,t}}\cos(B_{r,t}) + i e^{-A_{r,t}}\sin(B_{r,t}) -e^{-A_{r,s}}\cos(B_{r,s}) -
i e^{-A_{r,s}}\sin(B_{r,s}) \right|^2\\
&\qquad=e^{-2A_{r,t}} +e^{-2A_{r,s}} - 2 e^{-(A_{r,t}+A_{r,s})}\cos\left(B_{r,t}-B_{r,s}\right).
\end{align*}
Now, by the definition of $\Gamma(\cdot)$ function, we have that for all
$z\in \mathbb{C}$ with $\Re(z)>0$,
\begin{align}\label{E5_:GammaInt}
\int_\R \ud x\: e^{-z |x|^a} = \frac{2}{a} z^{-1/a} \int_0^\infty\ud y\: e^{-y}
y^{1/a-1}
=2 z^{-1/a} \Gamma\left(1+1/a\right).
\end{align}
Hence,
\begin{align}\label{E5_:At}
\int_\R \ud \xi\: e^{-2A_{r,t}}  = \int_\R \ud \xi\: e^{-2(t-r)\cos(\beta) |\xi|^a}
=\frac{2^{1/a^*}\Gamma\left(1+1/a\right)}{\cos(\beta)^{1/a}}
\frac{1}{(t-r)^{1/a}}\;.
\end{align}
Note that in the above integral, we have used the fact that
the value of $\cos(\beta)$ does not depend on $\xi$ because
$\cos\left(\beta\right) = \cos(\pi\delta/2)$.
Similarly,
\[
\int_\R\ud \xi\: e^{-2A_{r,s}}
=\frac{2^{1/a^*}\Gamma\left(1+1/a\right)}{\cos(\beta)^{1/a}}
\frac{1}{(s-r)^{1/a}}\;.
\]
For the third term, notice that
\begin{align*}
e^{-(A_{r,t}+A_{r,s})} \cos(B_{r,t}-B_{r,s}) &=
\exp\left(-\left(\frac{t+s}{2}-r\right)2\cos(\beta) |\xi|^a\right)\cdot
\cos\left((t-s)\sin(\beta)|\xi|^a\right) \\
&=
\Re\left[
\exp\left\{
-\left[\left(\frac{t+s}{2}-r\right)2\cos(\beta)
+i
(t-s)\sin(\beta) \right]|\xi|^a
\right\}
\right]
\end{align*}
Apply \eqref{E5_:GammaInt} with
$z=\left(\frac{t+s}{2}-r\right)\cos(\beta)
+i (t-s)\sin(\beta)$:
\begin{multline*}
\int_\R\ud \xi\:
\exp\left\{
-\left[\left(\frac{t+s}{2}-r\right)\cos(\beta)
+i
(t-s)\sin(\beta) \right]|\xi|^a
\right\} \\
=
2\Gamma(1+1/a) \left[
\left(\frac{t+s}{2}-r\right)2\cos(\beta)
+i
(t-s)\sin(\beta)
\right]^{-1/a}.
\end{multline*}
For $z\in \bbC$, suppose that $ z= \rho e^{i\theta}$ with $\theta \in\R$
and $\rho\ge 0$. For $c\le 0$, one has that
$|\Re(z^c)| = |\Re(\rho^c e^{i\theta c})|=\rho^c |\cos(\theta c)|\le \rho^c \le
|\Re(z)|^c$. Hence,
\[
2\int_\R  \ud \xi \: e^{-(A_{r,t}+A_{r,s})} \cos(B_{r,t}-B_{r,s}) \le
2^{1+1/a^*}\frac{\Gamma(1+1/a)}{\cos(\beta)^{1/a}}
\frac{1}{\left((t+s)/2-r\right)^{1/a}}\;.
\]
Integrating over $r$ and then applying Lemma \ref{L:TimeIncr} below, we
get (see the integration in \eqref{E5_:G-Dt2})
\begin{align}\notag
I &\le
\frac{\Gamma(1+1/a)}{2^{1/a} \pi\cos(\beta)^{1/a}}
\int_0^s\ud r
\left(\frac{1}{(t-r)^{1/a}}+\frac{1}{(s-r)^{1/a}}-\frac{2}{
\left[(t+s)/2-r\right]^ {1/a}}\right)  =
C_2 (t-s)^{1/a^*},
\end{align}
where $1/a^*+1/a=1$. As for \eqref{E:G-t2}, from \eqref{E5_:At}, we have
\begin{multline}
\int_s^t\ud r\int_\R \ud z \left[\lMr{\delta}{G}{a}(t-r,x-z)\right]^2
=
\frac{1}{2\pi}\int_s^t \ud r \int_\R \ud \xi\: e^{-(t-r) |\xi|^a \cos(\beta)}
\\ \label{E5_:G-Dt2}
= \frac{\Gamma(1+1/a)}{2^{1/a}\pi\cos(\beta)^{1/a}} \int_s^t\ud r\:
\frac{1}{(t-r)^{1/a}}
=
\frac{a^*\Gamma(1+1/a)}{2^{1/a}\pi\cos(\beta)^{1/a}} (t-s)^{1/a^*}\;.
\end{multline}
This completes the proof of Proposition \ref{P:G}.
\end{proof}

\begin{lemma}\label{L:TimeIncr}
For all $t\ge s\ge 0$, $a\in \;]1,2]$, we have
\[
\int_0^s\ud r\left(
\frac{1}{(t-r)^{1/a}}
+
\frac{1}{(s-r)^{1/a}}
-\frac{2}{((t+s)/2-r)^{1/a}}
\right)
\le a^* (2^{1/a}-1)\: \left(t-s\right)^{1/a^*}\;,
\]
where $a^*$ is the dual of $a$: $ 1/a+1/a^*=1$.
\end{lemma}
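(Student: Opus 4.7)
My plan is to reduce the inequality to an elementary comparison of powers by computing each of the three $r$-integrals explicitly and then exploiting the concavity of $x\mapsto x^{1/a^*}$.

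First I would set $\beta := 1/a^* = 1 - 1/a \in (0,1/2]$ (the condition $a\in\;]1,2]$ enters only through $\beta\le 1/2$, but actually concavity alone, $\beta\in(0,1]$, is what is needed). Each of the three pieces is an antiderivative of a power: with the change of variables $u = t - r$, $u = s - r$, $u = (t+s)/2 - r$ respectively, one finds
\begin{align*}
\int_0^s\frac{\ud r}{(t-r)^{1/a}} &= a^*\bigl[t^{\beta}-(t-s)^{\beta}\bigr],\\
\int_0^s\frac{\ud r}{(s-r)^{1/a}} &= a^*\, s^{\beta},\\
\int_0^s\frac{\ud r}{((t+s)/2-r)^{1/a}} &= a^*\left[\left(\tfrac{t+s}{2}\right)^{\beta}-\left(\tfrac{t-s}{2}\right)^{\beta}\right].
\end{align*}
Putting these together, the integral on the left-hand side equals
\[
a^*\left\{\left[t^{\beta}+s^{\beta}-2\left(\tfrac{t+s}{2}\right)^{\beta}\right]+\left[2\left(\tfrac{t-s}{2}\right)^{\beta}-(t-s)^{\beta}\right]\right\}.
\]

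Now I invoke concavity. Since $\beta\in(0,1)$, the function $f(x)=x^{\beta}$ is concave on $\R_+$, so by Jensen's inequality (applied to the midpoint of $s$ and $t$),
\[
t^{\beta}+s^{\beta} \le 2\left(\tfrac{t+s}{2}\right)^{\beta},
\]
which makes the first bracket above non-positive and hence discardable. For the second bracket, factor out $(t-s)^{\beta}$:
\[
2\left(\tfrac{t-s}{2}\right)^{\beta}-(t-s)^{\beta} = \bigl(2\cdot 2^{-\beta}-1\bigr)(t-s)^{\beta} = \bigl(2^{1-\beta}-1\bigr)(t-s)^{1/a^*},
\]
and $1-\beta = 1/a$, so $2^{1-\beta}-1 = 2^{1/a}-1$. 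Combining, the whole integral is bounded by $a^*\,(2^{1/a}-1)\,(t-s)^{1/a^*}$, which is the desired estimate.

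The argument has no genuine obstacle: all three integrals are exact, and the only analytic input is the one-line concavity inequality $t^{\beta}+s^{\beta}\le 2((t+s)/2)^{\beta}$. The main care needed is bookkeeping the cancellations and the identification $1-1/a^*=1/a$, so that the factor $2^{1-\beta}-1$ matches the constant $2^{1/a}-1$ appearing in the stated bound.
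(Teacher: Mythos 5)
Your proof is correct, and it takes a genuinely different route from the paper's. Both proofs begin the same way, by computing all three $r$-integrals in closed form, yielding (after dividing by $a^*$)
\[
t^{\beta}+s^{\beta}-(t-s)^{\beta}+2^{1/a}\left[(t-s)^{\beta}-(t+s)^{\beta}\right],\qquad \beta=1/a^*.
\]
At this point the two arguments diverge. The paper normalizes via homogeneity, setting $r=s/t\in[0,1]$, defines the one-variable ratio
\[
g(r)=\frac{r^{\beta}+1-(1-r)^{\beta}+2^{1/a}(1-r)^{\beta}-2^{1/a}(1+r)^{\beta}}{(1-r)^{\beta}},
\]
then computes $g'(r)$ explicitly, applies the AM--GM inequality to show $g'\ge 0$ on $[0,1[$, and evaluates $\lim_{r\uparrow 1}g(r)=2^{1/a}-1$ by L'H\^opital to conclude $\sup g=g(1)=2^{1/a}-1$. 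You instead regroup the five terms into two blocks and observe that the block $t^{\beta}+s^{\beta}-2((t+s)/2)^{\beta}$ is $\le 0$ by concavity of $x\mapsto x^{\beta}$ (Jensen at the midpoint), while the remaining block $2((t-s)/2)^{\beta}-(t-s)^{\beta}$ equals $(2^{1-\beta}-1)(t-s)^{\beta}$ exactly, and $1-\beta=1/a$. Your version avoids the derivative calculation, L'H\^opital, and AM--GM entirely, replacing them with a single one-line concavity inequality; it also makes visible exactly which part of the expression contributes the constant $2^{1/a}-1$ (it comes out as an identity rather than as a boundary limit). The paper's approach, by contrast, shows the stronger fact that the normalized quantity $g$ is monotone increasing in $s/t$, which is extra information but not needed for the Lemma. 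Both are valid; yours is shorter and more transparent.
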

\begin{proof}
Clearly,
\begin{multline*}
\frac{1}{a^*}\int_0^s\ud r\left(
\frac{1}{(t-r)^{1/a}}
+
\frac{1}{(s-r)^{1/a}}
-\frac{2}{((t+s)/2-r)^{1/a}}
\right)  \\
=
s^{1/a^*} + t^{1/a^*} - (t-s)^{1/a^*}
+2^{1/a}(t-s)^{1/a^*}-2^{1/a}(t+s)^{1/a^*}\:.
\end{multline*}
We need to prove that
\[
\frac{s^{1/a^*} + t^{1/a^*} - (t-s)^{1/a^*}
+2^{1/a}(t-s)^{1/a^*}-2^{1/a}(t+s)^{1/a^*}}{(t-s)^{1/a^*}}
\]
is bounded from above for all $0\le s\le t$.
Or equivalently, we need to show that
\[
g(r):= \frac{r^{1/a^*} + 1 - (1-r)^{1/a^*}
+2^{1/a}(1-r)^{1/a^*}-2^{1/a}(1+r)^{1/a^*}}{(1-r)^{1/a^*}}
\]
is bounded for all $r\in [0,1]$.
Clearly, $g(0) = 0$ and $\lim_{r\uparrow 1} g(r) = 2^{1/a}-1$ (by applying
L'H\^opital's rule once).
Hence $\sup_{r\in [0,1]}g(r)<\infty$. Actually
\[
g'(r) =
\frac{\left((1+r)^{1/a}+(1+1/r)^{1/a}\right)-2^{1+1/a}}{a^*
(1-r)^{2-1/a}(1+r)^{1/a} }
\]
and notice that for all $r\in \;]0,1]$,
\[
(1+r)^{1/a}+(1+1/r)^{1/a} \ge 2\left[(1+r)(1+1/r) \right]^{1/(2a)}
= 2 \left(\sqrt{r}+\frac{1}{\sqrt{r}}\right)^{1/a} \ge 2^{1+1/a}\;.
\]
Hence  $g'(r)\ge 0$ for $r\in[0,1[$ and $\sup_{r\in [0,1]}g(r) = g(1)
=2^{1/a}-1$.
Therefore, Lemma \ref{L:TimeIncr} is proved with $C= a^* (2^{1/a}-1)$.
\end{proof}

The following proposition is useful to prove the $L^p(\Omega)$--continuity of $I(t,x)$.

\begin{proposition}\label{P:G-Margin}
Suppose that $a\in\:]1,2]$ and $|\delta|\le 2-a$.
Fix $(t,x)\in\R_+^*\times\R$. Denote
\begin{align*}
B:=B_{t,x}=\left\{(t',x')\in\R_+^*\times\R:\: 0 \le
t'\le
t+1/2,\: |x-x'|\le 1\right\}\:.
\end{align*}
Then there exists a constant $A>0$ such that for all
$(t',x')\in B$, $s \in [0,t'[$ and $|y|\ge A$,
\[
\lMr{\delta}{G}{a}\left(t'-s,x'-y\right) \le
\lMr{-\delta}{G}{a}\left(t+1-s,x-y\right)+\lMr{\delta}{G}{a}\left(t+1-s,x-y\right).
\]
\end{proposition}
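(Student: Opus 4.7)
The plan is to compare the tail asymptotics of the Green functions on the two sides, using the scaling property and the polynomial expansion of Lemma \ref{L:Green}(iv)-(v), combined with two sources of slack built into the geometry of $B$. First, for $(t',x')\in B$ and $s\in[0,t')$, one has $0 < t'-s\le t+1/2$ and
\[
t+1-s \;=\; (t+1-t')+(t'-s) \;\ge\; (t'-s)+\tfrac{1}{2},
\]
whence $(t+1-s)/(t'-s)\ge 1+\kappa$ uniformly in $s,t'$, where $\kappa := 1/(2t+1) > 0$. Second, for $|y|\ge A$ with $A>|x|+2$, one has $|x'-y|/|x-y| = 1+O(A^{-1})$, and both $x-y$ and $x'-y$ share the same sign $\epsilon := -\sgn(y)$. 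The case $a=2$ reduces to a direct Gaussian comparison: the time slack forces the LHS's Gaussian exponent to exceed the RHS's by $\Theta(|y|^2)$ for $|y|$ large, while the prefactor ratio grows only polynomially in $1/(t'-s)$, so the exponential absorbs it.

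For $a\in(1,2)$, I would invoke Lemma \ref{L:Green}(iv),(v) (with $N\ge 2$) to obtain, uniformly in $\tau\in(0,t+1]$ as $|\xi|\to\infty$,
\[
\lMr{\pm\delta}{G}{a}(\tau,\xi) \;=\; \frac{\Gamma(a+1)\sin((a\pm\epsilon\delta)\pi/2)}{\pi}\cdot\frac{\tau}{|\xi|^{a+1}} \;+\; O\!\left(\frac{\tau^2}{|\xi|^{2a+1}}\right),
\]
where $\epsilon = \sgn(\xi)$. Applying this with $(\tau,\xi)=(t'-s,x'-y)$ on the left and with $(\tau,\xi)=(t+1-s,x-y)$ on each of the two right-hand summands, the sum-to-product identity $\sin((a+\epsilon\delta)\pi/2)+\sin((a-\epsilon\delta)\pi/2) = 2\sin(a\pi/2)\cos(\delta\pi/2)>0$ lets me decompose the leading-order difference as
\begin{align*}
\text{RHS}_{\text{lead}} - \text{LHS}_{\text{lead}} &= \tfrac{\Gamma(a+1)}{\pi}\sin\!\left(\tfrac{(a+\epsilon\delta)\pi}{2}\right)\!\left[\tfrac{t+1-s}{|x-y|^{a+1}} - \tfrac{t'-s}{|x'-y|^{a+1}}\right] \\
&\quad + \tfrac{\Gamma(a+1)}{\pi}\sin\!\left(\tfrac{(a-\epsilon\delta)\pi}{2}\right) \tfrac{t+1-s}{|x-y|^{a+1}}.
\end{align*}
Both sine factors are nonnegative, since $(a\pm\delta)\pi/2\in[(a-1)\pi,\pi]$ when $|\delta|\le 2-a$ and $a\in(1,2)$, and at least one is strictly positive because their sum is $2\sin(a\pi/2)\cos(\delta\pi/2)>0$.

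If $\sin((a-\epsilon\delta)\pi/2)>0$, which is the generic case covering all $|\delta|<2-a$ and half of the boundary $|\delta|=2-a$, the second summand alone provides a strictly positive leading-order gap of order $|y|^{-(a+1)}$. Otherwise we are in the degenerate boundary case $|\delta|=2-a$ with $\epsilon\delta=a-2$, where $\sin((a+\epsilon\delta)\pi/2)=\sin((a-1)\pi)>0$, and the time slack $\kappa$ together with the spatial-ratio bound gives
\[
\tfrac{t+1-s}{|x-y|^{a+1}} - \tfrac{t'-s}{|x'-y|^{a+1}} \;\ge\; \tfrac{t'-s}{|x-y|^{a+1}}\bigl[(1+\kappa)\bigl(1-O(A^{-1})\bigr)^{a+1}-1\bigr] \;>\; 0
\]
for $A$ sufficiently large. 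Finally, the remainder terms on both sides are dominated by $C(t+1)^2/|y|^{2a+1}$, a factor $(t+1)/|y|^a$ smaller than the leading terms; a further increase of $A$ absorbs them into the positive leading-order gap, yielding the inequality claimed in the proposition. The chief obstacle is precisely the degenerate boundary case $|\delta|=2-a$ with the "one-sided" direction of $y$, where the sine-coefficient comparison collapses and the argument must rely on the rigid time-slack $\kappa$ rather than on coefficient slack; everything else reduces to uniformly controlled remainder estimates.
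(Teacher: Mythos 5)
Your proof is correct and relies on the same two ingredients as the paper's: the polynomial tail expansion of Lemma \ref{L:Green}(v), and the uniform time slack $(t+1-s)/(t'-s)\ge (t+1)/(t+1/2)=1+1/(2t+1)>1$. The bookkeeping, however, is genuinely different. The paper first symmetrizes, setting $F(t,x):=\lMr{\delta}{G}{a}(t,x)+\lMr{-\delta}{G}{a}(t,x)$, and exploits two structural facts about $F$: it is even in $x$ and, by bell-shapedness, monotone decreasing in $|x|$ beyond the mode. These give immediately $\lMr{\delta}{G}{a}(t'-s,x'-y)\le F(t'-s,|x-y|-1)$, so the whole proof reduces to comparing $F(t+1-s,|x-y|)$ with $F(t'-s,|x-y|-1)$, where the spatial arguments are deterministically close. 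The leading coefficient of $F$ is $2\pi^{-1}\Gamma(a+1)\sin(\pi a/2)\cos(\pi\delta/2)$, which is a single, sign-independent, strictly positive constant on the whole range $|\delta|\le 2-a$, so no case analysis is needed; the ratio tends to $(t+1-s)/(t'-s)$, which is uniformly $>1$. Your route skips the symmetrization, keeps the direction-dependent coefficients $\sin((a\pm\epsilon\delta)\pi/2)$, and therefore has to split into a "generic" and a "degenerate boundary" case. That split is an artifact of the asymmetric expansion, not an essential obstacle — the paper simply never encounters it.

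One small gap in your write-up: in the generic case you assert that the second summand "alone provides a strictly positive leading-order gap," but you do not show that the first summand cannot spoil it. Since $\sin((a+\epsilon\delta)\pi/2)\ge 0$, the first summand has the sign of the bracket $\bigl[\tfrac{t+1-s}{|x-y|^{a+1}}-\tfrac{t'-s}{|x'-y|^{a+1}}\bigr]$, which is not obviously nonnegative because $|x-y|$ may exceed $|x'-y|$. The fix is exactly the time-slack/spatial-ratio estimate you already deploy in the degenerate case: $(t+1-s)/(t'-s)\ge 1+\kappa$ while $(|x-y|/|x'-y|)^{a+1}\le (1+O(A^{-1}))^{a+1}\to 1$, so for $A$ large the bracket is nonnegative uniformly. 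With that observation added (and your remark that for $a=2$ one falls back on a direct Gaussian comparison, which the paper instead cites from \cite{ChenDalang13Heat}), your proof is complete.
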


\begin{proof}
The case where $a=2$ is proved in \cite[Proposition 5.3]{ChenDalang13Heat}, so
we only need to prove the case where $1<a<2$.
Denote $F(t,x):=\lMr{\delta}{G}{a}\left(t,x\right)+\lMr{-\delta}{G}{a}\left(t,x\right)$.
Suppose the mode of the density $\lMr{\delta}{G}{a}\left(1,x\right)$ is located at $m\in\R$.
By the scaling property, the mode of the density $\lMr{\delta}{G}{a}\left(t,x\right)$ locate at $t^{1/a}m$.
Hence, when $x\ge t^{1/a}|m|$ (resp. $x\le- t^{1/a}|m|$), the function $x\mapsto F(t,x)$ is decreasing (resp. increasing).

Fix $(t,x) \in \R_+^*\times\R$. Assume that $|y-x|>1+(t+1/2)^{1/a}|m|$. Because of the above fact, we know that
for all $(t',x')\in B$,
\begin{align}\label{E_:Gt'x'}
\lMr{\delta}{G}{a}\left(t'-s, x'-y\right)
\le
F\left(t'-s, |y-x|-|x-x'|\right)
\le
F\left(t'-s, |y-x|-1\right).
\end{align}
Apply Lemma \ref{L:Green} (v) with $N=1$ and use the scaling property of
$\lMr{\delta}{G}{a}(t,x)$ to get
\[
F(t,x) =2\frac{\Gamma(a+1)}{\pi} \sin(\pi a/2)\cos(\pi \delta/2) \frac{t}{|x|^{1+a}}
+ O\left(\frac{t^2}{|x|^{2a+1}}\right).
\]
Because $|\delta|\le 2-a$ and $a\in \:]1,2[\:$, we see that $ \sin(\pi a/2)\cos(\pi \delta/2) \ne 0$. Hence,
\begin{align*}
\frac{F\left(t+1-s,x-y\right)}{F\left(t'-s,
|y-x|-1\right)}
&=\frac{\frac{t+1-s}{|x-y|^{1+a}}+O\left(\frac{(t+1-s)^2}{|x-y|^{2a+1}}\right)}{\frac{t'-s}{||y-x|-1|^{1+a}}+O\left(\frac{(t'-s)^2}{||y-x|-1|^{2a+1}}\right)}\\
&=\frac{t+1-s}{t'-s} \frac{|x-y|^{-(a+1)}+O\left(\frac{t+1-s}{|x-y|^{2a+1}}\right)}{||y-x|-1|^{-(a+1)}+O\left(\frac{t'-s}{||y-x|-1|^{2a+1}}\right)}.
\end{align*}
Now it is clear that
\[
\lim_{|y|\rightarrow +\infty} \inf_{(t',x')\in B,\:s\in[0,t'[}
\frac{|x-y|^{-(a+1)}+O\left(\frac{t+1-s}{|x-y|^{2a+1}}\right)}{||y-x|-1|^{-(a+1)}+O\left(\frac{t'-s}{||y-x|-1|^{2a+1}}\right)}=1,
\]
which with \eqref{E_:Gt'x'} implies that
\begin{align*}
\lim_{|y|\rightarrow +\infty} \inf_{(t',x')\in B,\:s\in[0,t'[}\frac{F\left(t+1-s,x-y\right)}{F\left(t'-s,
x'-y\right)}
&\ge \inf_{(t',x')\in B,\:s\in[0,t'[}\frac{t+1 - s}{t + 1/2 - s} \\
&\geq \frac{t+1}{t+1/2} = 1+ \frac{1}{2t +1} >1,
\end{align*}
where we have used the fact that $s\mapsto (t+1 - s)/(t + 1/2 - s)$ is increasing.
Hence, we can choose a large constant $A$ uniformly over $(t',x')\in B$ and $s\in [0,t']$, such that for all $|y|\ge A$,
the inequality
\[
\frac{F\left(t+1-s,x-y\right)}{\lMr{\delta}{G}{a}\left(t'-s,
x'-y\right)}
\ge
1+ \frac{1}{2(t +1)}>1
\]
holds for all $(t',x')\in B$ and $s\in [0,t']$.
This completes the proof of Proposition \ref{P:G-Margin}.
\end{proof}


\begin{lemma}\label{L:DeriFrac}
 For all $m,n\in\bbN$, there exist polynomials
$\left\{P^{(n,m)}_{i}(x):\:i= 0,\dots,n\right\}$ such that
\begin{enumerate}[(1)]
 \item $P^{(n,m)}_{i}(x)$ are of degree $\leq i$ and they satisfy
\[
\frac{\partial^{n+m}}{\partial t^n \partial x^m} \lMr{\delta}{G}{a}(t,x)
= \frac{1}{(at)^{n}} \sum_{i=0}^{n} P_{i}^{(n,m)}(x)
\frac{\partial^{i+m}}{\partial x^{i+m}} \lMr{\delta}{G}{a}(t,x)\;;
\]
\item For fixed $t>0$, the partial derivative $\frac{\partial^{n+m}}{\partial
t^n \partial x^m} \lMr{\delta}{G}{a}(t,\cdot)$ as a function of $x$ is smooth
and integrable.
\end{enumerate}
\end{lemma}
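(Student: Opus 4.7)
\medskip

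\noindent\textbf{Proof plan.}
The natural approach is induction on $n$, with the base case $n=1$ coming straight from the scaling property in Lemma~\ref{L:Green}(iv). Writing $g(\xi):=\lMr{\delta}{G}{a}(1,\xi)$ and using $\lMr{\delta}{G}{a}(t,x)=t^{-1/a}g(t^{-1/a}x)$, a direct differentiation in $t$ together with the identity $t^{-2/a}g'(t^{-1/a}x)=\partial_x\lMr{\delta}{G}{a}(t,x)$ gives
\[
\partial_t\lMr{\delta}{G}{a}(t,x)=\frac{1}{at}\Bigl[-\lMr{\delta}{G}{a}(t,x)-x\,\partial_x\lMr{\delta}{G}{a}(t,x)\Bigr].
\]
Applying $\partial_x^m$ to both sides and using Leibniz's rule on $x\,\partial_x G$ produces the $n=1$ case with $P_0^{(1,m)}(x)=-(1+m)$ and $P_1^{(1,m)}(x)=-x$, both of the correct degree.

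\smallskip

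For the inductive step, assume the identity at level $n$. Differentiating in $t$ gives two groups of terms: one from $\partial_t(at)^{-n}=-n\,(at)^{-(n+1)}\cdot a$, and one from applying the $n=1$ identity (with $m$ replaced by $i+m$) to each $\partial_t\partial_x^{i+m}G$. Collecting by order of the spatial derivative yields the recursion
\[
P_j^{(n+1,m)}(x)=-(na+1+j+m)\,P_j^{(n,m)}(x)\;-\;x\,P_{j-1}^{(n,m)}(x),\qquad 0\le j\le n+1,
\]
with the convention $P_{-1}^{(n,m)}=P_{n+1}^{(n,m)}=0$. An immediate induction on $j$ shows $\deg P_j^{(n+1,m)}\le\max(\deg P_j^{(n,m)},\,1+\deg P_{j-1}^{(n,m)})\le j$, so part~(1) is established.

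\smallskip

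For part~(2), smoothness is automatic from the Fourier representation \eqref{E:Green}, which shows that $\lMr{\delta}{G}{a}(t,\cdot)\in C^\infty(\R)$ for every $t>0$, and each $\partial_x^{i+m}\lMr{\delta}{G}{a}(t,\cdot)$ inherits this smoothness. Integrability of $\partial_t^n\partial_x^m\lMr{\delta}{G}{a}(t,\cdot)$ follows from the representation in (1) together with the decay estimate \eqref{E:G-bd} of Lemma~\ref{L:Green}(vi): the term $P_i^{(n,m)}(x)\,\partial_x^{i+m}\lMr{\delta}{G}{a}(t,x)$ is dominated, up to a $t$-dependent constant, by $|x|^{i}/(1+|x|^{1+i+m+a})$, which decays like $|x|^{-(1+m+a)}$ at infinity and is therefore integrable.

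\smallskip

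I do not anticipate a serious obstacle here: the content is essentially a bookkeeping identity for the self-similar kernel $\lMr{\delta}{G}{a}$. The only point worth care is ensuring that the recursion preserves the degree bound $\deg P_j^{(n,m)}\le j$, which is what makes the uniform-in-$x$ decay estimates in (2) work; this is exactly the outcome of the recursion above.
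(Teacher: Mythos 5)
Your proposal is correct and follows essentially the same route as the paper: establish the $n=1$ identity, then induct on $n$ by differentiating the level-$n$ identity in $t$ and re-substituting the level-$1$ identity to convert each $\partial_t\partial_x^{i+m}G$ back into pure $x$-derivatives, finally collecting terms to get the recursion for $P^{(n+1,m)}_j$. The only cosmetic difference is that you obtain the $n=1$ case for all $m$ at once by applying $\partial_x^m$ and Leibniz's rule to $\partial_t G=-(at)^{-1}(G+x\partial_x G)$ (yielding $P_0^{(1,m)}=-(1+m)$, $P_1^{(1,m)}=-x$), whereas the paper first treats $n=1,m=0$ and then supplies a separate $m\to m+1$ inductive step; your explicit recursion $P_j^{(n+1,m)}=-(na+1+j+m)P_j^{(n,m)}-xP_{j-1}^{(n,m)}$ is exactly what one obtains from the paper's formulas after inserting these level-$1$ polynomials. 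Part (2) is likewise handled the same way, via the degree bound together with the decay estimate \eqref{E:G-bd}.
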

\begin{proof}
Part (2) is a direct consequence of (1) and the upper bounds in Lemma
\ref{L:Green} (vi).
We now  prove (1).
It is clearly true for $n=m=0$: in this case,
$P_0^{(0,0)}(x)\equiv 1$. Moreover if $n=0$, then it is trivially true, with
$P_0^{(0,m)}(x) = 1$. 
Consider the case $n=1$ and $m=0$. Using the scaling properties twice, we have
\begin{align*}\notag
 \frac{\partial }{\partial t} \lMr{\delta}{G}{a}(t,x) =&
\left.\left[-\frac{1/a}{t^{1+1/a}} \lMr{\delta}{G}{a}\left(1,\xi\right) +
\frac{1}{t^{1/a}} \frac{\partial \lMr{\delta}{G}{a}(1,\xi)}{\partial \xi}
\frac{-x/a }{t^{1+1/a}} \right]\right|_{\xi=t^{-1/a}x}\\ \notag
=& -\frac{1}{a t}\left(\frac{1}{t^{1/a}}
\lMr{\delta}{G}{a}\left(1,\frac{x}{t^{1/a}}\right) + \left.\frac{x}{t^{2/a}}
 \frac{\partial \lMr{\delta}{G}{a}(1,\xi)}{\partial \xi}
\right|_{\xi=t^{-1/a}x}\right)\\
=& -\frac{1}{at}\left(\lMr{\delta}{G}{a}(t,x)+ x \frac{\partial
\lMr{\delta}{G}{a}(t,x)}{\partial x}\right).
\end{align*}
So in this case, $P^{(1,0)}_0(x)= -1$ and $P^{(1,0)}_1(x)= -x$.
Now suppose that it is true for $n, m\in\bbN$. It is easy to see that it is
true also for $n,m+1$ with
\[
P^{(n,m+1)}_i (x) =
P^{(n,m)}_i (x)+\frac{\ud}{\ud x}P^{(n,m)}_{i+1} (x) ,\quad \text{for $i=0,\dots,
n-1$},\qquad P^{(n,m+1)}_n (x) = P^{(n,m)}_n (x),
\]
so $P^{(n,m+1)}_i (x) $ is a polynomial of degree $\le i$.

   Now assume that $n\ge 1$ and the property is true for $\tilde n \leq n$ and all $m \geq 0$.
We shall establish the property for $n+1$ and $m$.
 By the induction assumption, we have
\[
\frac{\partial^{n+1+m}}{\partial t^{n+1} \partial x^m} \lMr{\delta}{G}{a}(t,x)
= \frac{-n a}{(at)^{n+1}} \sum_{i=0}^{n} P_{i}^{(n,m)}(x)
\frac{\partial^{i+m}}{\partial x^{i+m}} \lMr{\delta}{G}{a}(t,x)
+ \frac{1}{(at)^{n}} \sum_{i=0}^{n} P_{i}^{(n,m)}(x)
\frac{\partial^{1+i+m}}{\partial t \partial x^{i+m}} \lMr{\delta}{G}{a}(t,x).
\]
Then replace $\frac{\partial^{1+i+m}}{\partial t \partial x^{i+m}}
\lMr{\delta}{G}{a}(t,x)$ by the following sum using the induction assumption
\[
\frac{\partial^{1+i+m}}{\partial t \partial x^{i+m}} \lMr{\delta}{G}{a}(t,x)
= \frac{1}{at} \left(
P_0^{(1,i+m)}(x)  \frac{\partial^{i+m}}{\partial x^{i+m}}
\lMr{\delta}{G}{a}(t,x)
+
P_1^{(1,i+m)}(x) \frac{\partial^{i+m+1}}{\partial x^{i+m+1}}
 \lMr{\delta}{G}{a}(t,x)
\right).
\]
Finally, after grouping terms one can choose the following polynomials:
\begin{align*}
 P_0^{(n+1,m)}(x)
=& -n a P_0^{(n,m)}(x) + P_0^{(n,m)}(x)P_0^{(1,m)}(x),
\end{align*}
which is a polynomial of order $0$,
\begin{align*}
P_i^{(n+1,m)}(x)  =&
-na P_i^{(n,m)}(x) + P_i^{(n,m)}(x)P_0^{(1,i+m)}(x)+
P_{i-1}^{(n,m)}(x)P_1^{(1,i+m-1)}(x),
\end{align*}
which are polynomials of degree $\leq i$, for $i=1,\dots, n$, and
\begin{align*}
P_{n+1}^{(n+1,m)}(x) =&  P_n^{(n,m)}(x) P_1^{(1,n+m)}(x),
\end{align*}
which are polynomials of degree $\le n+1$. This completes the proof of Lemma
\ref{L:DeriFrac}.
\end{proof}
%


\begin{lemma}\label{L:InDt}
Suppose that $a\in \;]1,2]$ and  $\mu\in \calM_a\left(\R\right)$.
\begin{enumerate}[(1)]
 \item The function $J_0(t,x) = \left(\lMr{\delta}{G}{a}(t,\cdot)
*\mu\right)(x)$
belongs to $C^\infty\left(\R_+^*\times\R\right)$.
\item
For all compact sets $K\subset \R_+^*\times\R$ and $v\in\R$,
\begin{align}
\label{E:InDt}
\sup_{(t,x)\in K}\left(\left[v^2+J_0^2\right]\star \calK \right)(t,x)
<\infty.
\end{align}
In particular,
\begin{align}\label{E:J20K}
\left(J_0^2\star \calK \right)(t,x)  \le
C' (t\vee 1)^{2(1+1/a)}  t^{1-2/a} \left[
t^{-1/a} +  \exp\left(\gamma^{a^*} t\right)
\right],
\end{align}
where
\begin{align}\label{E:InDt-Cst}
 C':= C A_a^2 K_{a,0}^2
\max\left(a^*,\frac{\Gamma(1/a^*)^2}{\Gamma(2/a^*)}\right),
\end{align}
$C=C(a,\delta,\lambda)$ is defined in \eqref{E:Cst-UpK}, $K_{a,0}$ is defined in \eqref{E:G-bd}, and
\begin{align}
 \label{E:Aa}
A_a:=\sup_{y\in\R}\int_\R \frac{|\mu|(\ud z)}{1+|y-z|^{1+a}}.
\end{align}
\end{enumerate}
\end{lemma}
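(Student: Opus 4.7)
For Part (1), I would differentiate $J_0$ under the integral sign. By Lemma \ref{L:DeriFrac}, each derivative $\partial_t^n \partial_x^m \lMr{\delta}{G}{a}(t, x-z)$ is a finite linear combination of terms of the form $(at)^{-n} P_i^{(n,m)}(x-z)\, \partial_y^{i+m} \lMr{\delta}{G}{a}(t, x-z)$ with polynomials $P_i^{(n,m)}$ of degree $\le i$. Together with \eqref{E:G-bd}, the absolute value of each such term is dominated, uniformly for $(t, x)$ in a compact subset of $\R_+^* \times \R$, by a constant times $(1+|x-z|^{1+m+a})^{-1}$: the polynomial factor $|x-z|^i$ is absorbed into the denominator $|x-z|^{1+i+m+a}$. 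Since $\mu \in \calM_a(\R)$, this bound is $|\mu|$-integrable, and dominated convergence yields $J_0 \in C^\infty(\R_+^* \times \R)$.

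For Part (2), once \eqref{E:J20K} is established, \eqref{E:InDt} will follow easily: the right-hand side of \eqref{E:J20K} is locally bounded on $\R_+^* \times \R$, and $(v^2 \star \calK)(t, x) = v^2 \calH(t; \lambda)$ is locally bounded on $\R_+^*$ by integrating the estimate of Proposition \ref{P:UpperBdd-K} over space and time. So the substantive work is the explicit bound \eqref{E:J20K}.

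I would start by applying Lemma \ref{L:Green}(vi) with $T = t$ to get
\[
|J_0(t, x)| \le (\lMr{\delta}{G}{a}(t, \cdot) * |\mu|)(x) \le A_a K_{a,0}\, t^{-1/a} (t \vee 1)^{1+1/a}.
\]
The naive approach of replacing $J_0^2(t-s, x-y)$ in the convolution by $\sup_x J_0^2(t-s, x)$ fails, since it produces the time integral $\int_0^t (t-s)^{-2/a}(\cdots)\,\ud s$ whose $(t-s)^{-2/a}$ factor is not integrable when $a \in \;]1, 2]$. The fix will be the factorization
\[
J_0^2(t-s, x-y) \le \sup_z |J_0(t-s, z)| \cdot |J_0(t-s, x-y)|,
\]
combined with the estimate $\calK(s, y) \le (C/s^{1/a})\, \lMr{\delta}{G}{a}(s, y)\bigl(1 + s^{1/a} \exp(\gamma^{a^*} s)\bigr)$ from Proposition \ref{P:UpperBdd-K}, followed by the semigroup property (Lemma \ref{L:Green}(iii)), which gives
\[
\int_\R |J_0|(t-s, x-y)\, \lMr{\delta}{G}{a}(s, y)\, \ud y \le (\lMr{\delta}{G}{a}(t, \cdot) * |\mu|)(x) \le A_a K_{a,0}\, t^{-1/a}(t \vee 1)^{1+1/a}.
\]
This trades the singular $(t-s)^{-2/a}$ for the integrable product $(t-s)^{-1/a} \cdot t^{-1/a}$, and reduces \eqref{E:J20K} to the Beta integral $\int_0^t (t-s)^{-1/a} s^{-1/a}\, \ud s = B(1/a^*, 1/a^*)\, t^{1-2/a}$ together with the exponential piece $\int_0^t (t-s)^{-1/a} \exp(\gamma^{a^*} s)\, \ud s \le a^* t^{1/a^*} \exp(\gamma^{a^*} t)$ (from $\exp(-\gamma^{a^*} u) \le 1$). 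Collecting constants via $\Gamma(1/a^*)^2/\Gamma(2/a^*)$ and $a^*$ produces \eqref{E:J20K} with $C'$ as in \eqref{E:InDt-Cst}. The hard part will be recognizing this semigroup-based cancellation that rescues the otherwise divergent $s$-integral.
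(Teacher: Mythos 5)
Your proposal is correct and follows the paper's argument essentially verbatim: Part (1) via Lemma \ref{L:DeriFrac} together with \eqref{E:G-bd}/\eqref{E:Gtx-bd} to justify differentiation under the integral, and Part (2) via the factorization $J_0^2 = J_0\cdot J_0$ (bounding one factor by the uniform estimate \eqref{E:J0B}, keeping the other so that the semigroup property collapses the spatial integral to $J_0(t,x)$), followed by the Beta integral and the elementary exponential bound. The paper phrases the key step as ``replacing one factor $J_0(s,y)$ of $J_0^2(s,y)$ by the above bound,'' which is exactly the device you identified as rescuing the otherwise divergent $\int_0^t (t-s)^{-2/a}\,\ud s$.
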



\begin{proof}
(1) Fix $0<t\le T$ and $n,m \in\bbN$. By Lemma \ref{L:DeriFrac} and \eqref{E:Gtx-bd},
\[
\left|
\frac{\partial^{n+m}}{\partial t^n \partial x^m} \lMr{\delta}{G}{a}(t,x)\right|
\le
 \frac{1}{(at)^{n}} \sum_{i=0}^{n}  \left|P_{i}^{(n,m)}(x)\right| K_{a,i+m}\: t^{-\frac{i+m+1}{a}}
\frac{(T\vee 1)^{1+\frac{i+m+1}{a}}}{1+|x|^{1+i+m+a}}.
\]
Since the polynomials $P_{i}^{(n,m)}(x)$ are of order $i$, for some finite constant $C>0$ depending on $a$, $m$, $n$ and $T$, the above bound reduces to
\[
\left|
\frac{\partial^{n+m}}{\partial t^n \partial x^m} \lMr{\delta}{G}{a}(t,x)
\right|
\le
 C \frac{ g(t)}{1+ |x|^{m+1+a}},\quad\text{with $g(t):=\sum_{i=0}^n t^{-n-\frac{1+i+m}{a}}$}.
\]
Hence, for $0<t_1<t_2\le T$,
\begin{equation}\label{rd1}
\int_{t_1}^{t_2}\ud s \int_\R \mu(\ud z) \left|\frac{\partial^{n+m}}{\partial t^n \partial x^m} \lMr{\delta}{G}{a}(s,z)\right|<+\infty.
\end{equation}
 By Fubini's theorem and induction, it is now possible to conclude that $J_0(\cdot,\circ)\in C^{\infty}(\R_+^*\times\R)$. Indeed, the first step of this induction argument is:
\begin{align*}
   J_0(t_2,x) - J_1(t_1,x) &= \int_\R \mu(\ud y) (\lMr{\delta}{G}{a}(t_2,x-y) - \lMr{\delta}{G}{a}(t_1,x-y))\\
	  &= \int_\R \mu(\ud y) \int_{t_1}^{t_2} \ud t \, \frac{\partial}{\partial t} \lMr{\delta}{G}{a}(t,x-y)
	   = \int_{t_1}^{t_2} \ud t  \int_\R \mu(\ud y) \frac{\partial}{\partial t} \lMr{\delta}{G}{a}(t,x-y),
\end{align*}
where we have used Fubini's theorem, which applies by \eqref{rd1}. This shows that
$$
    \frac{\partial}{\partial t} J_0(t,x) =  \int_\R \mu(\ud y) \frac{\partial}{\partial t} \lMr{\delta}{G}{a}(t,x-y),
$$
and higher derivatives are obtained by induction. This proves (1).

(2) Without loss of generality,
assume that $\mu$ is non-negative, i.e., $\mu\in \calM_{a,+}\left(\R\right)$.
By \eqref{E:Gtx-bd}, for $0<s\le t$,
\begin{align}\label{E:J0B}
J_0\left(s,y\right) \le  A_a\: K_{a,0}\: (t\vee 1)^{1+1/a} s^{-1/a},
\end{align}
where $A_a$ is defined in \eqref{E:Aa}.
Hence, by \eqref{E:UpBd-K}, and by replacing one factor $J_0(s,y)$ of $J_0^2(s,y)$ by the above bound, we have that
\begin{multline*}
\left(J_0^2\star \calK\right)(t,x) \le C \int_0^t \ud
s\left(\frac{1}{(t-s)^{1/a}}+ \exp\left(\gamma^{a^*}(t-s)\right)
\right)\int_\R \ud y\:
\lMr{\delta}{G}{a}\left(t-s,x-y\right) \\
\times A_a K_{a,0} (t\vee 1)^{1+1/a} s^{-1/a} \int_\R \mu(\ud z) \lMr{\delta}{G}{a}(s,y-z),
\end{multline*}
where the constant $C:=C(a,\delta,\lambda)$ is defined in
\eqref{E:Cst-UpK}.
Integrate over $\ud y$ using the semigroup property, and then integrate over $\mu(\ud z)$:
\begin{align}\label{E:Jsq0K}
\left(J_0^2\star \calK\right)(t,x) \le C A_a K_{a,0} (t\vee 1)^{1+1/a} J_0(t,x) \int_0^t \ud
s \: \frac{1}{s^{1/a}}\left(\frac{1}{(t-s)^{1/a}}+ \exp\left(\gamma^{a^*}(t-s)\right)
\right).
\end{align}
Apply \eqref{E:J0B} to $J_0(t,x)$.
The integral over $s$ gives
\begin{align}\notag
\int_0^t \ud s\left(\frac{1}{s^{1/a}(t-s)^{1/a}}+
\frac{1}{s^{1/a}}\exp\left(\gamma^{a^*}(t-s)\right)
\right)  & \le
\int_0^t \ud s\left(\frac{1}{s^{1/a}(t-s)^{1/a}}+
\frac{1}{s^{1/a}}\exp\left(\gamma^{a^*}t\right)
\right) \\ \notag
&= t^{1-2/a}\frac{\Gamma\left(1-1/a\right)^2}{\Gamma\left(2-2/a\right)}
+ a^* t^{1/a^*} \exp\left(\gamma^{a^*}t\right)\\
&=
t^{1/a^*} \left(
\frac{1}{t^{1/a}} \frac{\Gamma\left(1/a^*\right)^2}{\Gamma\left(2/a^*\right)}
+ a^* \exp\left(\gamma^{a^*}t\right)\right).
\label{E:IntS}
\end{align}
Hence, combining the above facts proves \eqref{E:J20K}.
This completes the proof of Lemma \ref{L:InDt}.
\end{proof}

\begin{proof}[Proof of Theorem \ref{T:ExUni}]
The proof follows the same six steps as those in the proof of
\cite[Theorem 2.4]{ChenDalang13Heat} with some minor changes:

(1) Both proofs rely on estimates on the kernel function $\calK(t,x)$ . Instead of an explicit formula as for the heat equation case (see
\cite[Proposition 2.2]{ChenDalang13Heat}), Proposition \ref{P:UpperBdd-K} ensures the finiteness and provides a bound on the kernel function $\calK(t,x)$.

(2) In the Picard iteration scheme (i.e., Steps 1--4 in the proof of \cite[Theorem 2.4]{ChenDalang13Heat}),
we need to check the $L^p(\Omega)$-continuity of the stochastic integral,
which then guarantees that at the next step, the integrand is again in $\calP_2$, via \cite[Proposition 3.4]{ChenDalang13Heat}.
Here, the statement of \cite[Proposition 3.4]{ChenDalang13Heat} is still true by replacing in its proof
\cite[Propositions 3.5 and 5.3]{ChenDalang13Heat} by Propositions \ref{P:G} and \ref{P:G-Margin}, respectively.
Note that when applying Proposition \ref{P:G-Margin}, we need to replace the $G_\nu^2$ in \cite[(3.8)]{ChenDalang13Heat}
by $(\lMr{-\delta}{G}{a}+\lMr{\delta}{G}{a})^2\le 2\lMr{-\delta}{G}{a}^2+2\lMr{\delta}{G}{a}^2$.

(3) In the first step of the Picard iteration scheme, the following property is useful:
For all compact sets $K\subseteq \R_+\times\R$,
\[
\sup_{(t,x)\in K}\left(\left[1+J_0^2\right]\star
\lMr{\delta}{G}{a}^2 \right) (t,x)<+\infty.
\]
For the heat equation, this property is discussed in \cite[Lemma 3.9]{ChenDalang13Heat}.
Here, Lemma \ref{L:InDt} gives the desired result with minimal requirements on the initial data.
This property, together with the calculation of  the upper bound on the function $\calK$ in Proposition \ref{P:UpperBdd-K}, guarantees that
all the $L^p(\Omega)$-moments of $u(t,x)$ are finite.
This property is also used to establish uniform convergence of the Picard iteration scheme, hence $L^p(\Omega)$--continuity of $(t,x)\mapsto I(t,x)$.

The proofs of \eqref{E:SecMom-Lower} and \eqref{E:TP-Lower} are identical to those of the corresponding properties in \cite[Theorem 2.4]{ChenDalang13Heat}, and \eqref{E:SecMom-Lin} and
\eqref{E:TP-Lin} are direct consequences of the preceding statements.

This completes the proof of Theorem \ref{T:ExUni}.
\end{proof}

\section{Proofs of Theorems \ref{T:Intermittency} and \ref{T:Growth}}
\label{S:IntGrow}

We begin with the upper bound in Theorem \ref{T:Intermittency}.

\begin{proof}[Proof of Theorem \ref{T:Intermittency} (1)]
  Recall from \eqref{E:D-gamma} that $\widehat{\gamma}_p= a_{p,\Vip}^2 z_p^2 \Lip_\rho^2 \Lambda\Gamma(1/a^*)$, and
$a^*=a/(a-1)$.
By \eqref{E:SecMom-Up}, \eqref{E:J20K} and \eqref{E:J0B}, for all $x\in\R$,
\[
\overline{m}_p(x) =
\mathop{\lim\sup}_{t\rightarrow\infty}\frac{\log\Norm{u(t,x)}_p^p}{t}
\le \frac{\widehat{\gamma}^{a^*}p }{2} = \frac{p}{2} \left(a_{p,\Vip}^2 z_p^2\Lip_\rho^2
\Lambda\Gamma(1-\frac{1}{a})\right)^{a/(a-1)} \;.
\]
Since $a_{p,\Vip}\le 2$ and $z_p\le 2\sqrt{p}$, \eqref{E:InterU} follows.
\end{proof}

\subsection{Lower bound on $\calK(t,x)$ (Proposition \ref{P:LowBdd-K})}
\label{SS:LowBd-K}

\begin{lemma} \label{L:Cad}
Suppose that  $a\in \;]1,2[$ and $|\delta|< 2-a$. Then the constant
$\widetilde{C}_{a,\delta}$ defined in \eqref{E:Cad} is strictly positive,
and so
\begin{align}\label{E:GlowBd}
\lMr{\delta}{G}{a}(t,x)\ge \widetilde{C}_{a,\delta} \; \pi \, g_a(t,x) =  \frac{\widetilde{C}_{a,\delta} \;t}{\left(t^{2/a}+x^2
\right)^{\frac{a}{2}+\frac{1}{2}}},\quad\text{for all $t>0$ and $x\in\R$.}
\end{align}
\end{lemma}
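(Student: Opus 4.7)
My plan is to prove the lemma by reducing to the time-normalized case $t=1$ via scaling and then analyzing the ratio on the real line, using continuity on compacts and the asymptotic expansion at infinity.

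First I would use the scaling identity from Lemma \ref{L:Green} (iv), namely $\lMr{\delta}{G}{a}(t,x) = t^{-1/a}\lMr{\delta}{G}{a}(1,t^{-1/a}x)$, together with the analogous scaling $\pi g_a(t,x) = t^{-1/a}(1+(t^{-1/a}x)^2)^{-(a+1)/2} = t^{-1/a}\,\pi g_a(1,t^{-1/a}x)$ (which follows immediately from the definition \eqref{E:ga}). These two scalings have the same prefactor $t^{-1/a}$, so
\[
  \frac{\lMr{\delta}{G}{a}(t,x)}{\pi g_a(t,x)} = \frac{\lMr{\delta}{G}{a}(1,y)}{\pi g_a(1,y)},\qquad y := t^{-1/a}x,
\]
and the infimum over $(t,x)\in\R_+^*\times\R$ equals the infimum of the right-hand side over $y\in\R$. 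So it suffices to prove that the function $h(y) := \lMr{\delta}{G}{a}(1,y)/(\pi g_a(1,y))$ is bounded below by a strictly positive constant on $\R$.

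Next, I would argue that $h$ is continuous and strictly positive on every compact subset of $\R$. The denominator $\pi g_a(1,y) = (1+y^2)^{-(a+1)/2}$ is continuous and strictly positive. For the numerator, the density $\lMr{\delta}{G}{a}(1,\cdot)$ is smooth by Lemma \ref{L:Green} (i); moreover, for $a\in\,]1,2[$ and $|\delta|\le 2-a$, the corresponding $a$-stable density is strictly positive on all of $\R$ (a standard fact for stable densities with index strictly greater than $1$). Hence $h$ is continuous and positive on $\R$.

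The key step is the behavior as $|y|\to\infty$. Applying Lemma \ref{L:Green} (v) with $N=1$ gives
\[
  \lMr{\delta}{G}{a}(1,y) = \frac{\Gamma(a+1)}{\pi}\,\sin\!\Big(\tfrac{(a\pm\delta)\pi}{2}\Big)\,|y|^{-a-1} + O(|y|^{-2a-1}),\qquad y\to\pm\infty,
\]
while $\pi g_a(1,y) = (1+y^2)^{-(a+1)/2} \sim |y|^{-a-1}$. Therefore
\[
  \lim_{y\to\pm\infty} h(y) = \frac{\Gamma(a+1)}{\pi}\,\sin\!\Big(\tfrac{(a\pm\delta)\pi}{2}\Big).
\]
Under the strict assumption $|\delta|<2-a$ together with $a\in\,]1,2[$, one has $a\pm\delta\in\,]2a-2,2[\,\subset\,]0,2[$, so $(a\pm\delta)\pi/2\in\,]0,\pi[$ and both sines are strictly positive. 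Combining this with continuity and positivity on compacts, $h$ is bounded below by a strictly positive constant on all of $\R$, establishing $\widetilde{C}_{a,\delta}>0$ and the pointwise bound \eqref{E:GlowBd} upon unwinding the scaling.

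The main obstacle is the strict positivity of $\lMr{\delta}{G}{a}(1,\cdot)$ at finite points: the bell-shape property from Lemma \ref{L:Green} (i) by itself only guarantees a single mode and infinite smoothness, not nonvanishing. I would cite the classical fact from the stable-law literature (e.g.\ \cite{Zolotarev86,UchaikinZolotarev99}) that for stability index $a\in\,]1,2[$ and admissible skewness $|\delta|\le 2-a$ the density is strictly positive on $\R$; alternatively, one can deduce it from the characteristic-function formula \eqref{E:Green} by a subordination/convolution argument, but this shortcut keeps the proof compact.
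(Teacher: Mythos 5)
Your proof is correct and follows essentially the same route as the paper's: rescale to $t=1$, use the full support of the stable density (cited from Zolotarev) for positivity at finite points, and match the polynomial tail rates at $\pm\infty$ to show the ratio has a strictly positive limit. The only difference is cosmetic: you invoke the paper's own Lemma \ref{L:Green}(v) with $N=1$ and explicitly verify $\sin((a\pm\delta)\pi/2)>0$ under $|\delta|<2-a$, whereas the paper compresses this into a citation of the tail-decay rate in Zolotarev, but the argument is the same.
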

\begin{proof}
By the scaling property of both $\lMr{\delta}{G}{a}$
and $g_a(t,x)$,
\[
 \inf_{(t,x)\in\R_+^*\times\R}\;\;
\frac{\lMr{\delta}{G}{a}(t,x)}{\pi g_a(t,x)}
= \inf_{y\in\R}\;\; \frac{\lMr{\delta}{G}{a}(1,y)}{\pi g_a(1,y)}\;.
\]
Let $f(y)=\frac{\lMr{\delta}{G}{a}(1,y)}{\pi g_a(1,y)}$.
Because the support of $\lMr{\delta}{G}{a}(1,\circ)$ is $\R$ (see \cite[Remark 4, p.79]{Zolotarev86}),
$f(y)>0$ for all $y\in\R$.
In the case where $1<a\le 2$ and $|\delta|< 2-a$, both
$\lMr{\delta}{G}{a}(1,y)$ and $g_a(1,y)$ have tails at $\pm\infty$ with
polynomial decay of the same rate as $|y|^{-1-a}$: see \cite[p.143]{Zolotarev86} (we use here the fact that $|\delta|\ne 2-a$).
Hence,
\[
\lim_{y\rightarrow\pm \infty} f(y)>0\;.
\]
Therefore, $f(y)$ is a smooth function on $\overline{\R}:=\R\cup\{\pm\infty\}$
such that $f(y)>0$ for all $y\in\overline{\R}$. This implies that
$\inf_{y\in\R} f(y)>0$, which completes the proof of Lemma \ref{L:Cad}.
\end{proof}

\begin{lemma}\label{L:Fourier}
Let $f(x) = \left(b^2+x^2\right)^{-\nu-1/2}$ with $b>0$ and $\nu \ge 1/2$.
Then
\begin{align}\label{E:Ff-Low}
\calF[f] (z) = \int_\R\ud x\: e^{-i z x} f(x)   \ge
C_\nu \:
b^{-2\nu} \exp\left(-b |z| \right)
\:,
\end{align}
for all $b>0$ and $z\in\R$, where the constant $C_\nu>0$ is given in \eqref{E:Cnu}.
\end{lemma}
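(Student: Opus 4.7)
The plan is to compute $\calF[f]$ exactly up to a Bessel-function integral and then lower-bound the resulting one-dimensional integral by a single elementary inequality. First I would use the identity
$$
(b^2+x^2)^{-\nu-1/2} = \frac{1}{\Gamma(\nu+1/2)}\int_0^\infty t^{\nu-1/2}\, e^{-t(b^2+x^2)}\,\ud t,
$$
interchange the $t$- and $x$-integrals by Fubini (everything is nonnegative), and evaluate the inner Gaussian integral in $x$ to obtain
$$
\calF[f](z) = \frac{\sqrt{\pi}}{\Gamma(\nu+1/2)}\int_0^\infty t^{\nu-1}\, e^{-tb^2-z^2/(4t)}\,\ud t.
$$
The rescaling $s=tb^2$ then isolates the dependence on the scalar $w:=b|z|$, pulling out a factor $b^{-2\nu}$, and the remaining integral $\int_0^\infty s^{\nu-1} e^{-s - w^2/(4s)}\,\ud s$ is a classical integral representation of $2(w/2)^\nu K_\nu(w)$, where $K_\nu$ is the modified Bessel function of the second kind.

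Next I would invoke the Poisson-type representation
$$
K_\nu(w)=\frac{\sqrt{\pi}\,(w/2)^\nu}{\Gamma(\nu+1/2)}\int_1^\infty e^{-ws}(s^2-1)^{\nu-1/2}\,\ud s\qquad(\nu\geq -1/2),
$$
substitute $s=1+r/w$ in order to factor out the desired $e^{-w}$, and simplify the algebraic prefactors. The task is then to lower-bound
$$
\int_0^\infty e^{-r}\, r^{\nu-1/2}(2w+r)^{\nu-1/2}\,\ud r.
$$
Here the hypothesis $\nu\geq 1/2$ enters decisively: because the exponent $\nu-1/2$ is nonnegative, the trivial inequality $(2w+r)^{\nu-1/2}\geq r^{\nu-1/2}$ holds and leaves the clean integral $\int_0^\infty e^{-r}r^{2\nu-1}\,\ud r=\Gamma(2\nu)$. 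The Legendre duplication formula $\Gamma(2\nu)=2^{2\nu-1}\Gamma(\nu)\Gamma(\nu+1/2)/\sqrt{\pi}$ then collapses all remaining constants.

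The net outcome is $\calF[f](z) \geq \sqrt{\pi}\,\Gamma(\nu)\,\Gamma(\nu+1/2)^{-1}\, b^{-2\nu} e^{-b|z|}=2C_\nu\, b^{-2\nu} e^{-b|z|}$, which is in fact a factor of $2$ stronger than \eqref{E:Ff-Low} and hence implies it. The only sensitive step is the inequality $(2w+r)^{\nu-1/2}\geq r^{\nu-1/2}$: it is what forces the assumption $\nu\geq 1/2$, since for $\nu<1/2$ the exponent becomes negative and the inequality reverses. I do not expect any other obstacle; once the Fubini swap and the Bessel substitution are in place, everything reduces to tracking constants through the duplication formula.
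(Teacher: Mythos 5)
Your proof is correct, and it takes a genuinely different route from the paper's. The paper invokes the Fourier-transform table entry $\calF[f](z) = (\vert z\vert/b)^\nu \,\sqrt{\pi}\,2^{-\nu}\Gamma(\nu+1/2)^{-1}K_\nu(b\vert z\vert)$ from Erd\'elyi et al., reduces the claim to bounding $u\mapsto u^\nu e^u K_\nu(u)$ away from zero, and shows this function is monotone increasing by computing its derivative (reducing to $K_\nu\geq K_{\nu-1}$, established via the integral representation $K_\nu(u)=\int_0^\infty e^{-u\cosh t}\cosh(\nu t)\,\ud t$, which is where $\nu\geq 1/2$ enters in the paper); the infimum is then the $u\to 0$ limit, $2^{\nu-1}\Gamma(\nu)$. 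You instead derive the transform from scratch (Gamma identity, Gaussian integral, Fubini), pass through the Poisson representation $K_\nu(w)=\sqrt{\pi}(w/2)^\nu\Gamma(\nu+1/2)^{-1}\int_1^\infty e^{-wt}(t^2-1)^{\nu-1/2}\ud t$, and after the substitution $t=1+r/w$ land on the one-line inequality $(2w+r)^{\nu-1/2}\geq r^{\nu-1/2}$ (where $\nu\geq 1/2$ enters for you), finishing with the duplication formula. Both uses of the hypothesis are honest, and your approach is more self-contained (no external transform table) and more clearly localizes where $\nu\geq 1/2$ is used. One remark: your calculation gives the lower bound $2C_\nu b^{-2\nu}e^{-b\vert z\vert}$, which is indeed a factor of $2$ better than stated; a sanity check at $\nu=1/2$, $b=1$ (where $\calF[(1+x^2)^{-1}](z)=\pi e^{-\vert z\vert}$) confirms that the paper's transform formula is off by a factor of $2$ relative to the Fourier convention $\int_\R e^{-izx}f(x)\,\ud x$ used in \eqref{E:Ff-Low}, so the paper's $C_\nu$ is sub-optimal by the same factor. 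This is harmless since the lemma asserts only a lower bound and is used later only up to constants, but your version is in fact sharp as $b\vert z\vert\to 0$.
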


\begin{proof}
Note that the function $f(x)$ is an even function, so its Fourier
transform is a real-valued function, instead of a complex
one, which allows us to bound this transform from below. Indeed, by \cite[(7) p.11]{Erdelyi1954-I},
we have that
\[
 \calF[f] (z)=
 \left(
\frac{|z|}{b}\right)^\nu \frac{\sqrt{\pi}}{2^{\nu} \Gamma\left(\nu+1/2\right)}
K_\nu\left(b |z|\right),\quad\text{for $\Re(b)>0$ and $\nu>-1/2$,}
\]
where $K_\nu(x)$ is the modified Bessel function of the second kind.
Equivalently, we need to prove that the function
\[
\R_+\times\R  \ni (b,z) \mapsto \left(
\frac{|z|}{b}\right)^\nu \frac{\sqrt{\pi}}{2^{\nu} \Gamma\left(\nu+1/2\right)}
K_\nu\left(b |z|\right) b^{2\nu} \exp\left(b|z|\right)
\]
is uniformly bounded away from zero. By choosing $u=b|z|$, we reduce this
problem to bounding the following function
\begin{align}\label{E5_:Fourier}
\R_+\ni u \mapsto \frac{\sqrt{\pi}}{2^\nu
\Gamma\left(\nu+1/2\right)}
f(u)
\end{align}
away from zero, where $f(u) := u^\nu e^{u} K_\nu(u)$.
By the differential formula for $x^{\pm\nu}K_\nu(x)$ (see, e.g.,
\cite[51:10:4, p.532]{oldham2008atlas}),
\[
f'(u) = e^u u^\nu \left( K_\nu(u) - K_{\nu-1}(u) \right).
\]
By the integral representation of $K_\nu(z)$ in \cite[10.32.9, p.
252]{NIST2010},
\begin{align*}
K_\nu(u) - K_{\nu-1}(u)
&=
\frac{1}{2} \int_0^\infty e^{-u\cosh(t)} \left(e^{\nu
t}-e^{-(\nu-1)t}\right)\left(1-e^{-t}\right)\ud t\ge 0\:.
\end{align*}
Hence, $f'(u)>0$ and
\[
\inf_{u\in\R_+}f(u) = \lim_{u\rightarrow 0} f(u) = 2^{\nu-1}\Gamma(\nu)\;,
\]
where we have used the property $K_\nu(u) \sim \frac{1}{2} \Gamma(\nu) (\frac{1}{2} u)^{-\nu}$ as $u\downarrow 0$ (see \cite[10.30.2, p. 252]{NIST2010}).
Therefore,
\[
C_\nu = \inf_{u\in\R_+}
\frac{\sqrt{\pi}}{2^\nu
\Gamma\left(\nu+1/2\right)}
f(u)
 = \frac{\Gamma(\nu)\Gamma(1/2)}{2 \Gamma(\nu+1/2)}\;,
\]
This completes the proof of Lemma \ref{L:Fourier}.
%
\end{proof}

\begin{lemma}\label{L:stst}
 For all $x\in\R$, $0\le s\le t$ and $a\in \;]1,2]$, we have
\[
g_1\left(s^{1/a}+(t-s)^{1/a},x\right)\ge
\frac{\sqrt{2}}{2}
g_1\left(t^{1/a},x\right).
\]
\end{lemma}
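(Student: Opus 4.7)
The plan is to reduce the inequality to an elementary algebraic statement about two quantities, then verify it using standard concavity/subadditivity properties of $x \mapsto x^{1/a}$ for $a \in \;]1,2]$.

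First I would set $A := s^{1/a} + (t-s)^{1/a}$ and $B := t^{1/a}$, and substitute the explicit formula $g_1(\tau, x) = \frac{1}{\pi}\,\frac{\tau}{\tau^2 + x^2}$. The claim then becomes
\[
\frac{A}{A^2 + x^2} \;\ge\; \frac{\sqrt{2}}{2}\,\frac{B}{B^2 + x^2},
\]
which after cross-multiplying (both denominators are positive) is equivalent to
\[
(2A - \sqrt{2}\,B)\,x^2 \;\ge\; AB\,(\sqrt{2}\,A - 2B).
\]
So it suffices to show that the coefficient of $x^2$ on the left is nonnegative while the right-hand side is nonpositive.

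The key step is establishing the two-sided comparison
\[
B \;\le\; A \;\le\; \sqrt{2}\,B \qquad \text{for all } 0 \le s \le t \text{ and } a \in \;]1,2].
\]
The lower bound $A \ge B$ follows from subadditivity of the concave map $x \mapsto x^{1/a}$ on $\R_+$ (with value $0$ at $0$): $s^{1/a} + (t-s)^{1/a} \ge (s + (t-s))^{1/a} = t^{1/a}$. The upper bound comes from concavity of the same map applied to the midpoint:
\[
\frac{s^{1/a} + (t-s)^{1/a}}{2} \;\le\; \left(\frac{s + (t-s)}{2}\right)^{1/a} \;=\; 2^{-1/a}\, t^{1/a},
\]
so $A \le 2^{1 - 1/a}\, B$, and since $a \le 2$ we have $2^{1-1/a} \le 2^{1/2} = \sqrt{2}$, giving $A \le \sqrt{2}\,B$.

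Finally, I combine the two bounds. From $A \ge B$, we get $2A - \sqrt{2}\,B \ge (2 - \sqrt{2})B \ge 0$, so the left-hand side of the reduced inequality is nonnegative. From $A \le \sqrt{2}\,B$, we get $\sqrt{2}\,A - 2B \le 2B - 2B = 0$, so the right-hand side is nonpositive. Hence the reduced inequality holds for every $x \in \R$, which completes the proof. The only nontrivial step is recognizing that the bound $A \le \sqrt{2}\,B$ holds precisely when $a \le 2$ — this is exactly why the hypothesis $a \in \;]1,2]$ appears — but once this is in hand the rest of the argument is purely algebraic.
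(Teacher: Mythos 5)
Your proof is correct, and it takes a genuinely different route from the paper's at the second stage of the argument. Both proofs hinge on the same two-sided bound $t^{1/a}\le s^{1/a}+(t-s)^{1/a}\le \sqrt{2}\,t^{1/a}$ (you derive it from concavity/subadditivity of $x\mapsto x^{1/a}$; the paper does the same thing via bounding $f(r)=r^{1/a}+(1-r)^{1/a}$ on $[0,1]$, so this part is essentially identical). Where you diverge is in how you convert the two-sided bound into the inequality on $g_1$: the paper establishes a quasi-concavity property of $\tau\mapsto g_1(\tau,x)$ (it is unimodal, so its value on an interval is at least the minimum of its endpoint values) and then computes the ratio $g_1(\sqrt{2}\,t^{1/a},x)/g_1(t^{1/a},x)$ explicitly, whereas you cross-multiply and reduce everything to the algebraic inequality $(2A-\sqrt{2}B)x^2\ge AB(\sqrt{2}A-2B)$, which splits cleanly into a sign check on each side given $B\le A\le\sqrt{2}B$. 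Your route is more elementary in that it avoids the intermediate unimodality observation about $g_1$ and replaces it with a one-line algebraic identity, at the mild cost of an ad hoc cross-multiplication that is less transparent about the structural role of $g_1$ being a scaled Poisson kernel. Both are short and correct; the paper's version makes the ``extremal time value'' viewpoint visible, which is re-used in spirit elsewhere, while yours is self-contained and slightly shorter.
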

\begin{proof}
First notice that
\[
s^{1/a}+(t-s)^{1/a} = t^{1/a}
\left((s/t)^{1/a}+(1-s/t)^{1/a}\right).
\]
Elementary calculations show that the
function $f(r) = r^{1/a}+(1-r)^{1/a}$ satisfies $1\le f(r)\le 2 (1/2)^{1/a}$ when $r\in [0,1]$.
Since $a\in \;]1,2]$, the upper bound
is bounded further by $\sqrt{2}$. Hence,
\begin{align}\label{E:tst}
t^{1/a} \le
s^{1/a}+(t-s)^{1/a}
\le  \sqrt{2}\, t^{1/a}\:.
\end{align}
We need a property of $g_1(t,x)$: If $0<t_0\le t\le t_1$, then
\begin{align}\label{E:stst}
g_1(t,x)\ge \min\left(g_1(t_0,x),g_1(t_1,x)\right).
\end{align}
Indeed, we only need to show \eqref{E:stst} for $x\ne 0$. When $x\ne 0$,
the function $t\mapsto g_1(t,x)$
is increasing on $t\in [0,x]$ and decreasing on $t\in[x,\infty[\:$ because
$\frac{\partial g_1}{\partial t}(t,x) \geq 0$ iff $0 \le t \le x$.
Hence, \eqref{E:stst} holds. Therefore, \eqref{E:tst} implies that
\begin{align} \label{E5_:stst}
\frac{g_1\left(s^{1/a}+(t-s)^{1/a},x\right)}{g_1\left(t^{1/a},x\right)}
\ge \min\left(1,
\frac{g_1\left(\sqrt{2}\:t^{1/a},x\right)}{g_1\left(t^{1/a},x\right)}\right).
\end{align}
Notice that
\[
\frac{g_1\left(\sqrt{2}\:t^{1/a},x\right)}{g_1\left(t^{1/a},x\right)}
= \frac{\sqrt{2}\left(t^{2/a}+x^2\right)}{2t^{2/a}+x^2}
= \sqrt{2} -\frac{\sqrt{2} t^{2/a}}{2t^{2/a} + x^2}
\ge \sqrt{2} - \frac{\sqrt{2}}{2} = \frac{\sqrt{2}}{2}\;.
\]
Taking this lower bound back to \eqref{E5_:stst} proves Lemma \ref{L:stst}.
\end{proof}



\begin{proof}[Proof of Proposition \ref{P:LowBdd-K}.]
Notice that, by \eqref{E:K} and \eqref{E:GlowBd},
\[
\calK(t,x) = \sum_{n=1}^\infty
\left(\lambda^2\lMr{\delta}{G}{a}^2\right)^{\star n}(t,x)
\ge \sum_{n=1}^\infty \left(\lambda^2 \widetilde{C}^2_{a,\delta}\; \pi^2\;
g_a^2\right)^{\star n} (t,x) \ge
\sum_{n=1}^\infty \left(\lambda^2 \widetilde{C}^2_{a,\delta}\; \pi^2\;
g_a^2\right)^{\star 2n}(t,x)\;.
\]
We now calculate space-time convolutions of $g_a^2$. By Plancherel's theorem and
Lemma \ref{L:Fourier}
with $\nu=a+1/2$ and $b=t^{1/a}$, we have that
\begin{multline*}
\int_0^t\ud s\int_\R \ud y \; g_a^2\left(t-s,x-y\right) g_a^2\left(s,y\right)
= \frac{1}{2\pi} \int_0^t\ud s\int_\R \ud z\;
\calF\left[g_a^2(t-s,x-\cdot)\right](z)\;
\calF\left[g_a^2(s,\cdot)\right](z) \\
\ge \frac{C_{a+1/2}^2}{2\pi^5}\int_0^t \ud s\; s^2(t-s)^2
\frac{1}{s^{2+1/a}}
\frac{1}{(t-s)^{2+1/a}}
\int_\R \ud z \exp\left(-i x z-|z|
\left(s^{1/a}+(t-s)^{1/a}\right)\right).
\end{multline*}
By the formula $\int_0^\infty \ud z\: \cos(xz) e^{-z} = (1+x^2)^{-1}$ (which explicits the Laplace transform of $\cos(xz)$) for
$x\in\R$ and the bound in Lemma
\ref{L:stst}, the $\ud z$-integral satisfies:
\begin{align*}
\int_\R \ud z\: \exp\left(-i x z-|z|
\left(s^{1/a}+(t-s)^{1/a}\right)\right)
&=
 2 \int_0^\infty \ud z\: \cos(xz) \exp\left(-z
\left(s^{1/a}+(t-s)^{1/a}\right)\right)\\
&=2 \pi g_1\left(s^{1/a}+(t-s)^{1/a}, x\right)\\
&\ge \pi \sqrt{2}
g_1\left(t^{1/a}, x\right).
\end{align*}
As for the integral over the time variable, using the Euler's Beta integral
\eqref{E:BI}, we have
\begin{align*}
\int_0^t\ud s\: \left[s(t-s)\right]^{ - 1/a}=
\frac{\Gamma\left(1-1/a\right)^2}{\Gamma\left(2-2/a\right)}
\: t^{1-2/a}\;.
\end{align*}
With these calculations, we obtain
\[
\left(g_a^2\star g_a^2\right)(t,x) \ge
K_1
 \:t^{1-2/a}\: g_1\left(t^{1/a},x\right),
\]
with
\[
K_1 :=
\frac{C_{a+1/2}^2\; \Gamma\left(1-1/a\right)^2 }{\pi^4
\sqrt{2}\;\; \Gamma\left(2-2/a\right)}\;.
\]
Denote
\[
\left(g_a^{2}\right)^{\star n} (t,x):= \underbrace{\left(g_a^2\star\cdots\star
g_a^2 \right)}_{\text{$n$ factors}}(t,x)\;.
\]
By the above calculation, we know that
\[
\left(g_a^{2}\right)^{\star 2} (t,x) \ge K_1 \:  t^{1-2/a}
g_1\left(t^{1/a},x\right).
\]
Suppose by induction that all $n\in\bbN$,
\[
\left(g_a^{2}\right)^{\star 2n} (t,x) \ge K_n \:  t^{2n-1-2n/a}
g_1\left(t^{1/a},x\right).
\]
Then
\begin{align*}
\left(g_a^{2}\right)^{\star 2(n+1)} (t,x)
&= \left(\left(g_a^{2}\right)^{\star 2 n}\star \left(g_a^2\right)^{\star 2}
\right)(t,x)\\
&\ge K_n\, K_1 \:
\int_0^t\ud s\: s^{2n-1-2n/a}(t-s)^{1-2/a}
\left(g_1\left(s^{1/a},\cdot\right)*g_1\left((t-s)^{1/a},
\cdot\right)\right)(x)\;.
\end{align*}
Using the semigroup property of $g_1$ and Lemma \ref{L:stst},
\[
\left(g_1\left(s^{1/a},\cdot\right)*g_1\left((t-s)^{1/a},\cdot\right)\right)(x)
=
g_1\left(s^{1/a}+(t-s)^{1/a},x\right)
\ge
\frac{1}{\sqrt{2}} \:g_1\left(t^{1/a},x\right).
\]
The $\ud s$-integral gives, by Euler's Beta integral \eqref{E:BI},
\[
\int_0^t \ud s\: s^{2n-1-2n/a}(t-s)^{1-2/a}
=\frac{\Gamma(b)\Gamma(b n)}{\Gamma(b(1+n))}\: t^{2(n+1)-1-2(n+1)/a}\;,
\]
where $b=2-2/a$. Thus we have
\[
\left(g_a^{2}\right)^{\star 2(n+1)} (t,x) \ge K_{n+1}
 \: t^{(n+1)b-1}\: g_1\left(t^{1/a},x\right),
\]
with the constant
\[
K_{n+1} = \frac{K_n}{2^{1/2}} \frac{K_1\Gamma(b)\Gamma(b
n)}{\Gamma(b(1+n))}
=\frac{K_{n-1}}{2^{2/2}}\frac{K_1\Gamma(b)\Gamma(b
(n-1))}{\Gamma(bn)}\frac{K_1\Gamma(b)\Gamma(b n)}{\Gamma(b(1+n))}=\cdots
=\frac{K_1^{n+1}}{2^{n/2}} \frac{\Gamma(b)^{n+1}}{\Gamma(b(1+n))}
\:.
\]
Therefore, we have
\begin{align*}
\calK(t,x) &\ge  \sqrt{2}\:  \sum_{n=1}^\infty
\frac{\left[\lambda^4 \widetilde{C}^4_{a,\delta} \pi^4 K_1\Gamma(b)\right]^n}{2^{n/2}\Gamma(b n)} \: t^{nb-1}\:
g_1\left(t^{1/a},x\right)\\
&= \sqrt{2}\; g_1\left(t^{1/a},x\right) t^{-1}
\sum_{n=1}^\infty \frac{\Upsilon^n  t^{bn}}{\Gamma(bn)}\\
&=
 \sqrt{2}\; \Upsilon\: g_1\left(t^{1/a},x\right) t^{b-1}
E_{b,b}\left(\Upsilon t^b\right),
\end{align*}
 where $\Upsilon:=\Upsilon(\lambda,a,\delta)= 2^{-1/2}\lambda^4\:\widetilde{C}^4_{a,\delta} \pi^4 K_1\Gamma(b)$
and
in the last equation we have used \eqref{E:Efrom1}.
The constant $C = C(\lambda, a, \delta)$ can be chosen as
\[
C = \sqrt{2}\: \Upsilon(\lambda,a,\delta)=
 2^{-1/2}
\:\lambda^4\:
\widetilde{C}^4_{a,\delta} \:C_{a+1/2}^2\: \Gamma(1-1/a)^2 ,
\]
which completes the proof of the lower bound on $\calK(t,x)$.

Using the fact that $\int_\R g_1\left(t^{1/a},x\right)\ud x = 1$, we have
\[
\int_\R\ud y\: \calK(t,y) \ge C\:  t^{b-1}
E_{b,b}\left(\Upsilon t^b\right).
\]
Recall that $b=2-2/a$ and so $b\in \;]0,1]$.
Integrating term-by-term in \eqref{E:Mittag-Leffler}, we
obtain
\begin{align}\label{E:IntE}
\int_0^t\ud s\: E_{\alpha,\beta}\left(\lambda s^\alpha \right) s^{\beta-1} =
t^\beta E_{\alpha,\beta+1}\left(\lambda t^\alpha\right),\quad \beta>0\:;
\end{align}
see \cite[(1.99) on p.24]{Podlubny99FDE}.
Therefore, integrating over $s$ using \eqref{E:IntE}, we see that
\begin{align*}
\int_0^t\ud s \int_\R \calK\left(s,y\right)\ud y &\ge
C\int_0^t \ud s\:  s^{b-1} E_{b,b}\left(\Upsilon s^b\right)=C \: t^b\: E_{b,b+1}\left(\Upsilon t^b \right),
\end{align*}
which completes the proof of Proposition \ref{P:LowBdd-K}.
\end{proof}

\subsection{Proofs of Theorems \ref{T:Growth} and \ref{T:Intermittency} (2)}
\label{SS:Growth}

We need some properties of $g_a(t,x)$ defined in \eqref{E:ga}.
\begin{lemma}\label{L:gaLowB}
For $a>0$, $g_a(t,x-y) \ge \pi t^{1/a} \:
g_a\left(t,\sqrt{2} \: x\right)
g_a\left(t,\sqrt{2} \:y\right)$.
\end{lemma}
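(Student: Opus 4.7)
The plan is a direct verification by substitution. Writing out the definitions, the inequality to prove is
\[
\frac{1}{\pi}\frac{t}{\left(t^{2/a}+(x-y)^2\right)^{(a+1)/2}}
\;\ge\;
\pi\, t^{1/a}\cdot \frac{1}{\pi^2}\cdot \frac{t^2}{\left(t^{2/a}+2x^2\right)^{(a+1)/2}\left(t^{2/a}+2y^2\right)^{(a+1)/2}}.
\]
After clearing $\pi$'s and rearranging, and using the identity $t^{1+1/a} = \left(t^{2/a}\right)^{(a+1)/2}$, the claim is equivalent to
\[
\left[\frac{\left(t^{2/a}+2x^2\right)\left(t^{2/a}+2y^2\right)}{t^{2/a}+(x-y)^2}\right]^{(a+1)/2} \;\ge\; \left(t^{2/a}\right)^{(a+1)/2},
\]
and since the exponent $(a+1)/2$ is positive, it suffices to prove the base inequality
\[
\left(t^{2/a}+2x^2\right)\left(t^{2/a}+2y^2\right) \;\ge\; t^{2/a}\left(t^{2/a}+(x-y)^2\right).
\]

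The next step is to set $s := t^{2/a}\ge 0$ and expand both sides. The left-hand side becomes $s^2 + 2s\,x^2 + 2s\,y^2 + 4x^2y^2$, while the right-hand side becomes $s^2 + s\,x^2 - 2s\,xy + s\,y^2$. Subtracting yields
\[
s\bigl(x^2 + 2xy + y^2\bigr) + 4x^2y^2 \;=\; s(x+y)^2 + 4x^2y^2 \;\ge\; 0,
\]
which is manifestly non-negative for every $x,y\in\R$ and every $t\ge 0$. This establishes the lemma.

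I do not expect any real obstacle: the inequality is algebraic once one notices the matching exponents on both sides, and the crucial cancellation is precisely the cross-term $2sxy$ on the right-hand side being dominated by $s(x+y)^2$ on the left. The only mildly non-obvious ingredient is the exponent identity $t^{1+1/a} = \left(t^{2/a}\right)^{(a+1)/2}$, which is what makes the bound scale-consistent and justifies the factor $\pi t^{1/a}$ in the statement.
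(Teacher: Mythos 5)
Your proof is correct and is essentially the same as the paper's: the paper compresses the computation to the chain $1+(x-y)^2\le 1+2x^2+2y^2\le (1+2x^2)(1+2y^2)$ (which are precisely your two facts $s(x+y)^2\ge 0$ and $4x^2y^2\ge 0$ after normalizing $s=1$), and then raises to the power $(a+1)/2$. Your version just carries the general $s=t^{2/a}$ through instead of invoking the scaling of $g_a$ at the outset.
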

\begin{proof}
This is a consequence of the inequalities $1+(x-y)^2\le 1+2x^2+2y^2\le (1+2x^2)(1+2y^2)$.
\end{proof}

\begin{lemma}\label{L:J0LowB}
Suppose that $a\in\;]1,2[$, $|\delta| < 2-a$ and $\mu\in
\calM_{a,+}\left(\R\right)$, $\mu\ne 0$. Then for all $\epsilon>0$, there exists
a constant $C$ such that
\[
\left(\lMr{\delta}{G}{a}(t,\cdot)*\mu\right)(x) \ge C \:
1_{[\epsilon,\infty[}(t)\, g_a\left(t,\sqrt{2}\: x\right),\qquad\text{for all
$t\ge 0$ and
$x\in\R$.}
\]
\end{lemma}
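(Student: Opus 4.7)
The plan is to chain together the two lemmas just proved (Lemma \ref{L:Cad} and Lemma \ref{L:gaLowB}) to reduce the statement to a scalar estimate on $\mu$, and then exploit the fact that $\mu \ne 0$ is a nonnegative measure in $\calM_a(\R)$ to extract a uniform positive lower bound for $t\ge\epsilon$.

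First, since $|\delta|<2-a$ and $a\in\;]1,2[\,$, Lemma \ref{L:Cad} gives
\[
J_0(t,x) = \int_\R \mu(\ud y)\, \lMr{\delta}{G}{a}(t,x-y)
\ge \widetilde{C}_{a,\delta}\,\pi \int_\R \mu(\ud y)\, g_a(t,x-y).
\]
Next, I would apply Lemma \ref{L:gaLowB} to each integrand to separate the $x$ and $y$ dependence:
\[
g_a(t,x-y) \ge \pi\, t^{1/a}\, g_a(t,\sqrt{2}\,x)\, g_a(t,\sqrt{2}\,y),
\]
so that
\[
J_0(t,x) \ge \widetilde{C}_{a,\delta}\,\pi^2\, g_a(t,\sqrt{2}\,x)\cdot t^{1/a}\int_\R \mu(\ud y)\, g_a(t,\sqrt{2}\,y).
\]
It therefore suffices to show that
\[
\Phi(t) := t^{1/a}\int_\R \mu(\ud y)\, g_a(t,\sqrt{2}\,y)
 = \frac{1}{\pi}\int_\R \mu(\ud y)\,\frac{1}{(1+2y^2/t^{2/a})^{(a+1)/2}}
\]
is bounded below by a positive constant uniformly in $t\ge\epsilon$.

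The key observation here is that, as rewritten above, the integrand
$
t\mapsto (1+2y^2/t^{2/a})^{-(a+1)/2}
$
is monotone increasing in $t$ for each fixed $y$, tending to $1$ as $t\to\infty$. Since $\mu\ne 0$ and $\mu\ge 0$, there exist $M>0$ and a Borel set $B\subseteq[-M,M]$ with $\mu(B)>0$. For $y\in B$ and $t\ge\epsilon$,
\[
\frac{1}{(1+2y^2/t^{2/a})^{(a+1)/2}} \ge \frac{1}{(1+2M^2/\epsilon^{2/a})^{(a+1)/2}},
\]
which yields
\[
\Phi(t) \ge \frac{\mu(B)}{\pi\,(1+2M^2/\epsilon^{2/a})^{(a+1)/2}} =: c_\epsilon > 0,\qquad t\ge\epsilon.
\]
Combining the three displays gives the claim with $C = \widetilde{C}_{a,\delta}\,\pi^2\, c_\epsilon$, and the indicator $1_{[\epsilon,\infty[}(t)$ handles the remaining range $t<\epsilon$ trivially.

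There is no real obstacle here beyond the bookkeeping above; the only subtle point is the uniform lower bound on $\Phi(t)$ as $t\to\infty$, and this is handled precisely by the monotonicity observation, which means one need not invoke any global finiteness of $\mu(\R)$ (indeed $\mu$ may be infinite, e.g.\ Lebesgue measure, which still lies in $\calM_{a,+}(\R)$ for $a>1$). The hypothesis $\mu\in\calM_{a,+}(\R)$ is used only implicitly to guarantee that the various integrals are well-defined and finite wherever needed.
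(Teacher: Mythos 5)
Your proof is correct and follows essentially the same route as the paper: apply Lemma \ref{L:Cad} and Lemma \ref{L:gaLowB} to factor out $g_a(t,\sqrt{2}\,x)$, then exploit monotonicity of the remaining integrand in $t$ on $[\epsilon,\infty[$. The only cosmetic difference is that you truncate to a bounded set $B$ with $\mu(B)>0$ to extract a positive lower bound, whereas the paper simply substitutes $t=\epsilon$ in the integrand and notes that the resulting integral $\int_\R\mu(\ud y)\,g_a(\epsilon,\sqrt{2}\,y)$ is strictly positive (and finite, by $\mu\in\calM_a$).
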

\begin{proof}
Denote $J_0(t,x)=\left(\lMr{\delta}{G}{a}(t,\cdot)*\mu\right)(x)$.
By the lower bound on $\lMr{\delta}{G}{a}(t,x)$  in \eqref{E:GlowBd},
Lemma \ref{L:gaLowB} and the scaling property of $g_a(t,x)$, we have
\begin{align*}
J_0(t,x) &\ge
\widetilde{C}_{a,\delta}\, \pi \int_\R\mu(\ud y)\: g_a(t,x-y)
\\
&\ge
\widetilde{C}_{a,\delta}\;\pi^2 t^{1/a}
g_a\left(t,\sqrt{2}\:x\right)
\int_\R\mu(\ud y)\:
g_a\left(t,\sqrt{2}\:y\right) \\
& =
\widetilde{C}_{a,\delta}\;\pi^2
g_a\left(t,\sqrt{2}\:x\right)
\int_\R \mu(\ud y)\:
\left(1+2\frac{y^2}{t^{2/a}}\right)^{
-\frac{a+1}{2}}\:.
\end{align*}
The above integrand is non-decreasing with respect to $t$. Hence
\begin{align*}
J_0(t,x) &\ge
\widetilde{C}_{a,\delta}\; \pi^2 \:
\Indt{t\ge \epsilon} \:
g_a\left(t,\sqrt{2}\:x\right)
\int_\R \mu(\ud y) \:
\left(1+2\frac{y^2}{\epsilon^{2/a}}\right)^{
-\frac{a+1}{2}}\\
&=
\widetilde{C}_{a,\delta}\; \pi^2 \:\epsilon^{1/a}
\Indt{t\ge \epsilon} \:
g_a\left(t,\sqrt{2}\:x\right)
\int_\R\mu(\ud y)\: g_a\left(\epsilon,\sqrt{2}\: y\right).
\end{align*}
Since the function $y\mapsto
g_a\left(\epsilon,\sqrt{2}y\right)$ is strictly positive
and $\mu$ is nonnegative and non-vanishing, the integral is positive.
Finally, we can take $C :=
\widetilde{C}_{a,\delta}\;\pi^2 \:\epsilon^{1/a}
\int_\R\mu(\ud y)\: g_a\left(\epsilon,\sqrt{2}\: y\right)$.
\end{proof}

\begin{lemma}\label{L:ga2g1}
For all $a>0$, $t\ge s\ge 0$ and $x\in\R$, we have
\[
\left(g_a^2\left(t-s,\sqrt{2} \;\cdot\right) *
g_1\left(s^{1/a},\cdot\right)\right)(x)
\ge
\frac{\Gamma\left(a+3/2\right)}{\sqrt{2}\pi^{3/2}\Gamma\left(2
+a\right)} s^{3/a}(t-s)^2 t^{-2(1+2/a)} g_1\left(t^{1/a},\sqrt{2}\: x\right).
\]
\end{lemma}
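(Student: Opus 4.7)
The plan is to reduce the convolution to an explicit Beta-function integral via a short chain of elementary algebraic inequalities. After substituting the definitions of $g_a^2$ and $g_1$ and shifting $u:=x-y$, one obtains
\[
J(x):=\bigl(g_a^2(t-s,\sqrt{2}\,\cdot)*g_1(s^{1/a},\cdot)\bigr)(x) = \frac{(t-s)^2 s^{1/a}}{\pi^3}\int_{\R}\frac{du}{(b^2+2u^2)^{a+1}\bigl(c^2+(x-u)^2\bigr)},
\]
where $b:=(t-s)^{1/a}$ and $c:=s^{1/a}$. I would then bound the integrand from below by separating the $x$- and $u$-dependence through two successive inequalities: first $(x-u)^2\le 2x^2+2u^2$, and then the elementary coupling $A+BW\le (A+B)(1+W)$ (valid for $A,B,W\ge 0$), applied with $A=c^2+2x^2$, $B=b^2$, $W=2u^2/b^2$. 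These combine to give
\[
c^2+(x-u)^2\le c^2+2x^2+2u^2\le (c^2+2x^2+b^2)(1+2u^2/b^2),
\]
and using $(b^2+2u^2)^{a+1}=b^{2(a+1)}(1+2u^2/b^2)^{a+1}$ produces an integrand lower bound of the form $[b^{2(a+1)}(c^2+2x^2+b^2)(1+2u^2/b^2)^{a+2}]^{-1}$, in which the $x$-dependence has been pulled entirely out of the integral.

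The remaining $u$-integral $\int_{\R}du/(1+2u^2/b^2)^{a+2}$ is a Beta integral: by the substitution $v=\sqrt{2}\,u/b$ it equals $(b/\sqrt{2})\,B(1/2,a+3/2) = b\sqrt{\pi}\,\Gamma(a+3/2)/(\sqrt{2}\,\Gamma(a+2))$, supplying the exact constants in the statement. The residual $x$-factor $(c^2+2x^2+b^2)^{-1}$ is then controlled by the superadditivity bound $s^{2/a}+(t-s)^{2/a}\le t^{2/a}$, valid because $2/a\ge 1$ for $a\in\,]1,2]$, giving $c^2+2x^2+b^2\le t^{2/a}+2x^2$ and hence $(c^2+2x^2+b^2)^{-1}\ge \pi\, g_1(t^{1/a},\sqrt{2}\,x)/t^{1/a}$. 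Combining with $b^{2a+1}=(t-s)^{(2a+1)/a}$, so that $(t-s)^2/b^{2a+1}=(t-s)^{-1/a}$, I arrive at the (strictly stronger) bound
\[
J(x)\ge \frac{\Gamma(a+3/2)}{\sqrt{2}\,\pi^{3/2}\,\Gamma(a+2)}\cdot\frac{s^{1/a}}{(t-s)^{1/a}\, t^{1/a}}\cdot g_1(t^{1/a},\sqrt{2}\,x).
\]
The claimed form then follows from the trivial pointwise inequality $(t-s)^{-1/a}s^{1/a}t^{-1/a}\ge s^{3/a}(t-s)^2 t^{-2-4/a}$, which rearranges to $t^{(2a+3)/a}\ge s^{2/a}(t-s)^{(2a+1)/a}$ and is immediate from $s\le t$ and $t-s\le t$.

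The main obstacle is selecting the right algebraic inequality in the middle step. A naive lower bound such as $(b^2+2u^2)^{a+1}\ge b^{2(a+1)}$ followed by direct convolution would leave a factor $(t^{2/a}+2x^2)^{a+1}$ in the denominator, producing too-rapid decay in $x$ at infinity and failing to match $g_1(t^{1/a},\sqrt{2}\,x)$ to the first power. The coupling $A+BW\le (A+B)(1+W)$ is precisely the device that transfers one power of $(b^2+2u^2)$ from the $u$-side onto the $x$-side, so that the final $x$-dependence is a single power of $(t^{2/a}+2x^2)$, as required.
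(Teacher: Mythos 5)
Your computation is correct and careful, and the route is genuinely different from the paper's: you work directly on the shifted integrand and use the elementary coupling $A + BW \le (A+B)(1+W)$ to transfer one power of $(b^2+2u^2)$ to the $x$-side, whereas the paper first applies Lemma~\ref{L:gaLowB} to split $g_1(s^{1/a}, x-y) \ge \pi s^{1/a}\, g_1(s^{1/a},\sqrt{2}\,x)\, g_1(s^{1/a},\sqrt{2}\,y)$ and then handles the $y$-integral. Both approaches are essentially algebraic, arrive at the same Beta integral, and your intermediate bound $(t-s)^{-1/a} s^{1/a} t^{-1/a}\, g_1(t^{1/a},\sqrt{2}\,x)$ is in fact strictly sharper than the stated form, which you correctly recover by the trivial pointwise inequality at the end.

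There is, however, one step that restricts your argument below the stated generality. Your passage from $(c^2 + 2x^2 + b^2)^{-1}$ to $(t^{2/a}+2x^2)^{-1}$ uses the superadditivity $s^{2/a} + (t-s)^{2/a} \le t^{2/a}$, which — as you yourself flag — requires $2/a \ge 1$, i.e.\ $a \le 2$. But the lemma is stated for all $a>0$, and the paper's own proof does cover the full range: there, because the two factors $s^{2/a}+ (\cdot)$ and $(t-s)^{2/a}+(\cdot)$ appear in \emph{separate} denominators, each is bounded below by replacing $s^{2/a}$ (resp.\ $(t-s)^{2/a}$) individually by $t^{2/a}$, which needs only $s \le t$ and $t-s \le t$ and works for every $a>0$. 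The coupling inequality you use fuses these two quantities into the single expression $c^2 + b^2 = s^{2/a} + (t-s)^{2/a}$, which is precisely why you are forced into the superadditivity step. Within the paper's range $a\in\,]1,2[$, your proof is fine (and yields a sharper constant), but to establish the lemma as stated you would need to replace that final step — for instance by not pooling $c^2$ and $b^2$ in the first place, or by adopting the paper's decomposition via Lemma~\ref{L:gaLowB}.
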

\begin{proof}
Apply Lemma \ref{L:gaLowB} 
with $t$ replaced by $s^{1/a}$ and $a=1$ to see that
\begin{multline}\label{E5_:ga2g1}
\left(g_a^2\left(t-s,\sqrt{2}\; \cdot\right) *
g_1\left(s^{1/a},\cdot\right)\right)(x)
\\
\ge
\pi s^{1/a}g_1\left(s^{1/a},\sqrt{2}\: x\right)
\int_\R \ud y\: g_a^2\left(t-s,\sqrt{2} \; y\right)
g_1\left(s^{1/a},\sqrt{2}\: y\right).
\end{multline}
Observe that for $0\le s\le t$,
\[
g_1\left(s^{1/a},\sqrt{2}\: x\right) = \frac{1}{\pi} \: \frac{s^{1/a}}{s^{2/a}
+2x^2}
\ge
\frac{1}{\pi} \: \frac{s^{1/a}}{t^{2/a}
+2x^2}
= \frac{s^{1/a}}{t^{1/a}} g_1\left(t^{1/a},\sqrt{2}\: x\right).
\]
Therefore,
\begin{align*}
\int_\R\ud y\: g_a^2\left(t-s,\sqrt{2}\; y\right)
g_1\left(s^{1/a},\sqrt{2}\: y\right)
& =
\frac{1}{\pi^3}
\int_\R\ud y\:
\frac{(t-s)^2}{\left((t-s)^{2/a}+2 y^2\right)^{1+a}}
\frac{s^{1/a}}{\left(s^{2/a}+2y^2\right)}\\
& \ge
\frac{1}{\pi^3} s^{1/a}(t-s)^2
\int_\R\ud y\:
\frac{1}{\left(t^{2/a}+2y^2\right)^{a+2}}\\
&=
\frac{1}{\sqrt{2}\: \pi^3} s^{1/a} (t-s)^2 t^{-(2+3/a)}
\int_\R
\frac{\ud u}{\left(1+u^2\right)^{a+2}}\:.
\end{align*}
By change of the variable $u=\tan(\theta)$,
\[
\int_\R
\frac{\ud u}{\left(1+u^2\right)^{a+2}} = 2\int_0^{\pi/2}
\cos^{2(a+1)}(\theta) \ud \theta =
\frac{\sqrt{\pi}\:\Gamma\left(a+3/2\right)}{\Gamma\left(2+a\right)},
\]
where the last integral is Euler's Beta integral in the form of
\cite[(5.12.2), p.142]{NIST2010}
\[
\int_0^{\pi/2} \ud \theta\: \sin^{2a-1}(\theta) \cos^{2b-1}(\theta)
= \frac{1}{2}\:\frac{\Gamma(a)\Gamma(b)}{\Gamma(a+b)},\qquad
\Re(a)>0,\:\Re(b)>0\:.
\]
Putting the above two lower bounds back to \eqref{E5_:ga2g1} proves Lemma
\ref{L:ga2g1}.
\end{proof}

\begin{lemma}\label{L:minY}
Suppose $\beta> 1$. For all $x\in\R$,
\[
\min_{y\in\R}  |x-y|^\beta+|y|\ge\left\{
      \begin{array}{ll}
			   \beta^{\frac{\beta}{1-\beta}} + \left||x|-\beta^{\frac{1}{1-\beta}}\right| & \mbox{if } \vert x \vert \geq \beta^{\frac{1}{1-\beta}},\\
				\vert x \vert^{\beta} & \mbox{otherwise.}
			\end{array}
   \right.
\]
\end{lemma}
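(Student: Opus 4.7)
The plan is to compute the minimum explicitly by calculus and show it equals the claimed lower bound. First I would reduce to $x\ge 0$ by symmetry: the change of variables $y\mapsto -y$ combined with $x\mapsto -x$ leaves the objective invariant, so $\min_y(|x-y|^\beta+|y|)$ depends only on $|x|$, and it suffices to treat $x\ge 0$.

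Next, I would localize the minimum to the interval $[0,x]$. On $y\le 0$, we have $f(y):=|x-y|^\beta+|y|=(x-y)^\beta-y$, whose derivative $-\beta(x-y)^{\beta-1}-1$ is strictly negative, so $f$ is decreasing there; on $y\ge x$ we have $f(y)=(y-x)^\beta+y$, whose derivative $\beta(y-x)^{\beta-1}+1$ is strictly positive, so $f$ is increasing. Hence the minimum is attained on $[0,x]$, where $f(y)=(x-y)^\beta+y$.

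On $[0,x]$, I would solve $f'(y)=-\beta(x-y)^{\beta-1}+1=0$, giving the unique critical point $y^\star=x-\beta^{1/(1-\beta)}$; since $f''(y)=\beta(\beta-1)(x-y)^{\beta-2}>0$, any interior critical point is a strict minimum. This splits into two cases. If $x\ge \beta^{1/(1-\beta)}$, then $y^\star\in[0,x]$ and substitution yields
\[
f(y^\star)=\bigl(\beta^{1/(1-\beta)}\bigr)^{\beta}+x-\beta^{1/(1-\beta)}=\beta^{\beta/(1-\beta)}+\bigl|\,|x|-\beta^{1/(1-\beta)}\bigr|,
\]
matching the first branch. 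If $0\le x<\beta^{1/(1-\beta)}$, then $y^\star<0$, so on $[0,x]$ we have $x-y\le x<\beta^{1/(1-\beta)}$, giving $\beta(x-y)^{\beta-1}<1$, i.e.\ $f'(y)>0$; hence $f$ is increasing on $[0,x]$ and its minimum is $f(0)=x^\beta=|x|^\beta$, matching the second branch.

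I expect no major obstacle: the only delicate point is bookkeeping with the exponent $1/(1-\beta)<0$ (so $\beta^{1/(1-\beta)}<1$) and verifying that the formula in the first branch really exceeds the formula in the second at the threshold $|x|=\beta^{1/(1-\beta)}$, which is automatic since both reduce to $\beta^{\beta/(1-\beta)}$ there. Finally, although the lemma is stated as an inequality, the argument above shows equality; recording this stronger conclusion costs nothing extra.
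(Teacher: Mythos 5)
Your proof is correct and follows essentially the same approach as the paper: reduce to $x\ge 0$ by symmetry, then locate the minimizer by studying the sign of $f'$, distinguishing whether the critical point $y^\star=x-\beta^{1/(1-\beta)}$ lies in $[0,x]$ or not. The only difference is that you spell out the localization to $[0,x]$ and the second-derivative check explicitly, whereas the paper leaves these routine steps to the reader.
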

\begin{proof}
Fix $x\in\R$ and set $f(y)=|x-y|^\beta+|y|$. Assume first that $x \ge 0$. By studying the sign of the derivative of $f'(y)$, we find that if $x \geq \beta^{\frac{1}{1-\beta}}$, then $f$ achieves its
minimum at $y = x - \beta^{\frac{1}{1-\beta}}$. If $0\leq x \leq \beta^{\frac{1}{1-\beta}}$, then $f$ achieves it minimum at $0$. The case $x<0$ is treated similarly.
\end{proof}

%

\begin{proof}[Proof of Theorem \ref{T:Growth}]
(1) In the following, we use $C$ to denote some nonnegative constant, which may depend on $a$, $\delta$ and $\Lip_\rho$,
and can change from line to line.
Fix $p\ge 2$.
By \eqref{E:Jsq0K} and \eqref{E:IntS}, when $t>1$,
\[
\left(J_0^2\star\widehat{\calK}_p\right)(t,x)\le
C A_a t^{2} \left(1+e^{\widehat{\gamma}_p^{a^*}t} \right)  \left|J_0(t,x)\right|,
\]
where the constants $A_a$ and $\widehat{\gamma}_p$ are defined in \eqref{E:Aa} and \eqref{E:D-gamma}, respectively.
By \eqref{E:SecMom-Up} and \eqref{rd3.20}, for $\alpha\ge 0$,
\[
\lim_{t\rightarrow+\infty}\frac{1}{t}\sup_{|x|\ge \exp(\alpha t)} \log \Norm{u(t,x)}_p^2=
\lim_{t\rightarrow+\infty}\frac{1}{t}\sup_{|x|\ge \exp(\alpha t)} \log \left(J_0^2\star\widehat{\calK}_p\right)(t,x) \le
 \widehat{\gamma}_p^{a^*} - \alpha\beta.
\]
Now, $\widehat{\gamma}_p^{a^*} - \alpha\beta<0$ if and only if $\alpha>\beta^{-1}\: \widehat{\gamma}_p^{a^*}$.
Therefore,
\[
 \overline{e}(p) :=
\inf\left\{\alpha>0: \underset{t\rightarrow \infty}{\lim}
\frac{1}{t}\sup_{|x|\ge \exp(\alpha t)} \log \E\left(|u(t,x)|^p\right) <0
\right\}\le \frac{\widehat{\gamma}_p^{a^*}}{\beta}<+\infty.
\]

   Concerning the sufficient condition for \eqref{rd3.20}, suppose that for some $\eta >0$, $\int_\R \vert \mu\vert(dy) (1+ \vert y \vert^{\eta}) < \infty$. We consider first the case where $\eta \in
\,]0,1+a[$. Then by \eqref{E:Gtx-bd},
$$
   \vert J_0(t,x)\vert  \leq \int_\R \vert \mu\vert(dy)\, \frac{K_{a,0}(1+t)}{1+\vert x-y\vert^{1+a}}
	\leq C K_{a,0}(1+t) \sup_{y\in\R} \frac{1}{[(1+\vert y \vert)(1+\vert x-y\vert)^{(1+a)/\eta}]^\eta}.
$$
Let $\tilde \beta = (1+a)/\eta > 1$. Notice that
$$
   (1+|x-y|^{\tilde \beta}) (1+|y|)\ge 1+
   |x-y|^{\tilde \beta} + |y|.
$$
By Lemma \ref{L:minY}, we see that
$$
    \vert J_0(t,x)\vert  \leq \tilde C (1+t) \frac{1}{1+\vert x \vert^{\eta}},
$$
which is condition \eqref{rd3.20} with $\beta = \eta$.

   Now consider the case where $\eta \geq 1+a$. Notice that if $\eta > 1+a$, then we generally do not expect \eqref{rd3.20} to hold with $\beta = \eta$, since for instance, $J_0(t,x) \sim 1/\vert
x\vert^{1+a}$ as $\vert x \vert \to \infty$ when $\mu = \delta_0$. Observe that
$$
	 \vert J_0(t,x)\vert \leq \int_{\R} \frac{\vert \mu\vert(dy)}{1+\vert x-y\vert^{1+a}}\ \lMr{\delta}{G}{a}(t,x-y) \left(1+\vert x-y\vert^{1+a}\right).
$$
From \eqref{E:Gtx-bd}, we deduce that for $t \geq 1$,
$$
    \lMr{\delta}{G}{a}(t,x-y) \left(1+\vert x-y\vert^{1+a}\right) \leq K_{a,n}\, t.
$$
Let $\varphi = \eta /(1+a)$, so that $\varphi \geq 1$. Since for some $\tilde c >0$,
\begin{align*}
   (1+\vert x-y\vert^2)(1+\vert y \vert^{2\varphi}) &\geq \frac{1}{2} + \vert x-y\vert^2 + \frac{1}{2} + \vert y \vert^{2\varphi} \geq (\tilde c \wedge \frac{1}{2})[1+\vert x-y\vert^2 + \vert
y\vert^2]\\
	 & \geq (\tilde c \wedge \frac{1}{2}) \left(1+\frac{x^2}{2}\right),
\end{align*}
we see that for all $t \geq 1$ and $x \in \R$, there is $c>0$ such that
\begin{align*}
   \vert J_0(t,x)\vert & \leq C K_{a,n}\, t \int_{\R} \frac{\vert \mu\vert(dy)}{[(1+\vert x-y\vert^{2})(1+\vert y \vert^{2\varphi})]^{(1+a)/2}}\ (1+\vert y \vert^\eta)\\
	&\leq \tilde C \frac{t}{(1+ x^2)^{(1+a)/2}} \int_{\R} \vert \mu\vert(dy)\, (1+\vert y \vert^\eta),
\end{align*}
which implies \eqref{rd3.20} with $\beta = 1+a$.

(2) We only need to consider the case $p=2$ because $\underline{e}(p) \ge
\underline{e}(2)$ for $p\ge 2$.
Assume first that $\vip=0$. Fix $\epsilon>0$, choose the constant
$\widehat{C}$ according to Lemma \ref{L:J0LowB} such that
\[
J_0(t,x)=\left(\lMr{\delta}{G}{a}(t,\cdot)*\mu\right) \ge
I_{0,l}(t,x):=\widehat{C} \:
1_{[\epsilon,\infty[}(t)\, g_a \left(t,\sqrt{2}\:x\right).
\]
By \eqref{E:SecMom-Lower},
\[
\Norm{u(t,x)}_2^2 \ge J_0^2(t,x) +
\left(J_0^2\star\underline{\calK}\right)(t,x)\ge
\left(I_{0,l}^2\star\underline{\calK}\right)(t,x).
\]
Set $b=2-2/a$ and let $\Upsilon= \Upsilon\left(\lip_\rho,a,\delta\right)$ (see
\eqref{E:Upsilon}).
By Proposition \ref{P:LowBdd-K} and Lemma \ref{L:ga2g1},
\begin{multline*}
\left(I_{0,l}^2\star\underline{\calK}\right)(t,x) \ge
\widehat{C}^2 C \int_0^{t-\epsilon}\ud s \: s^{b-1}\:E_{b,b}\left(\Upsilon
s^b\right)\int_\R\ud y\:
g_a^2\left(t-s,\sqrt{2}y\right)g_1\left(s^{1/a},x-y\right)
\\
\ge
\frac{\widehat{C}^2  C \Gamma\left(a+3/2\right)}{\sqrt{2}\: \pi^{3/2}\Gamma(2+a)}
g_1\left(t^{1/a},\sqrt{2}\:x\right)\: t^{-2(1+2/a)}
\int_0^{t-\epsilon} \ud s\:  s^{b-1}\:E_{b,b}\left(\Upsilon s^b\right)
s^{3/a} (t-s)^2 .
\end{multline*}

Now the integral can be bounded as follows:
\begin{align*}
\int_0^{t-\epsilon} \ud s\:  s^{b-1} \:E_{b,b}\left(\Upsilon s^b\right)
s^{3/a}
(t-s)^2
& \ge
\epsilon^2
\int_0^{t-\epsilon}\ud s\:
E_{b,b}\left(\Upsilon s^b\right) s^{b-1+3/a},
\end{align*}
and
\begin{align*}
\int_0^{t-\epsilon}\ud s\:
E_{b,b}\left(\Upsilon s^b\right) s^{b-1+3/a}
&=
\int_0^{t-\epsilon}\ud s\:
\sum_{n=0}^\infty \frac{\Upsilon^n s^{(n+1)b-1 +3/a}}{\Gamma(bn+b)} \\
&=
\sum_{n=0}^\infty
\frac{\Upsilon^n (t-\epsilon)^{(n+1)b+3/a}}{((n+1)b+3/a)\Gamma((n+1)b)}\;.
\end{align*}
Since $3/a\le 3$, we have
\begin{align*}
\int_0^{t-\epsilon} \ud s\:
E_{b,b}\left(\Upsilon s^b\right) s^{b-1+3/a}
&\ge
(t-\epsilon)^{b+3/a} \sum_{n=0}^\infty
\frac{\Upsilon^n (t-\epsilon)^{b n}}{\Gamma((n+1)b+4)}\\
&=
(t-\epsilon)^{b+3/a}E_{b,b+4}\left(\Upsilon (t-\epsilon)^b \right).
\end{align*}
Therefore, we have
\begin{align}\label{E5_:J0Klow}
 \left(I_{0,l}^2\star\underline{\calK}\right)(t,x) &\ge
\overline{C} \: g_1\left(t^{1/a},\sqrt{2}\:x\right)\: t^{-2(1+2/a)}
\: (t-\epsilon)^{b+3/a} E_{b,b+4}\left(\Upsilon
(t-\epsilon)^b\right),
\end{align}
where
\[
\overline{C} = \frac{\epsilon^2 \: \widehat{C}^2 \:  C\:
\Gamma\left(a+3/2\right)}{\sqrt{2}\: \pi^{3/2}\:\Gamma(2+a)}\;.
\]
Because $x\mapsto g_a(t,x)$ is an even function, decreasing for $x\ge 0$,
we deduce that for all $\beta\ge 0$,
\begin{align*}
\sup_{|x|>\exp(\beta t)} \Norm{u(t,x)}_2^2
\ge
\overline{C} \: g_1\left(t^{1/a},\sqrt{2}\:\exp(\beta t) \right)\: t^{-2(1+2/a)}
 (t-\epsilon)^{3/a-1} E_{b,b+4}\left(\Upsilon
(t-\epsilon)^b\right).
\end{align*}
Because $a\in \:]1,2[\;$, there exists $t_0\ge 0$ such that for all $t\ge t_0$, $t^{2/a} \le 2 e^{2\beta t}$,
so
\begin{align*}
g_1\left(t^{1/a},\sqrt{2}\:\exp(\beta t) \right)
&= \frac{1}{\pi} \frac{t^{1/a}}{t^{2/a}+2e^{2\beta t}}
\ge
\frac{1}{\pi} \frac{t^{1/a}}{4e^{2\beta t}}.
\end{align*}

Finally, by the asymptotic expansion of the Mittag-Leffler function in Lemma
\ref{L:Eab},
\begin{align}\label{E5_:GrowthIn}
\lim_{t\rightarrow\infty}\frac{1}{t}
\sup_{|x|>\exp(\beta t)} \log\Norm{u(t,x)}_2^2
\ge
\Upsilon^{1/b}  -2\beta
\;.
\end{align}
Therefore,
\begin{align*}
\underline{e}(2)&=\sup\left\{\beta>0: \lim_{t\rightarrow+\infty}
\frac{1}{t}\sup_{|x|>\exp(\beta t)} \log \Norm{u(t,x)}_2^2 >0\right\}\\
&\ge \sup\left\{\beta>0: \Upsilon^{1/b} - 2\beta
>0\right\}
= \frac{\Upsilon^{1/b}}{2}\;.
\end{align*}

Now let us consider the case where there is $c>0$ with $J_0 \geq c$, or $\vip\ne 0$. In this case, by \eqref{E:SecMom-Lower} and Proposition
\ref{P:LowBdd-K},
\[
\Norm{u(t,x)}_2^2 \ge (c^2 + \vip^2) \left(1\star \underline{\calK}\right)(t,x)
\ge C t^{b}
E_{b,b+1}\left(\Upsilon t^b\right).
\]
This lower bound does not depend on $x$ and hence, by Lemma \ref{L:Eab},  we get \eqref{E5_:GrowthIn} with the right-hand side replaced by $\Upsilon^{1/b}$.
This completes the proof of Theorem \ref{T:Growth}.
\end{proof}

\begin{proof}[Proof of Theorem \ref{T:Intermittency} (2)]
If $\vip\ne 0$, then from \eqref{E:SecMom-Lower} and Proposition
\ref{P:LowBdd-K}, for some constant $C>0$,
\[
\Norm{u(t,x)}_2^2\ge \vip^2\left(1\star\underline{\calK}\right)(t,x)
\ge
C \vip^2 t^b E_{b,b+1}\left(\Upsilon(\lip_\rho,a,\delta) t^b\right)
\;,
\]
where $b=2-2/a$ and the constant $\Upsilon(\lip_\rho,a,\delta)$ is defined in
\eqref{E:Upsilon}.
Then use the asymptotic expansion of $E_{\alpha,\beta}(z)$ in Lemma \ref{L:Eab} to obtain
\begin{align}
 \label{E:m2}
 \underline{m}_2(x)\ge
\Upsilon\left(\lip_\rho,a,\delta\right)^{1/b}\;.
\end{align}
If $\vip=0$, then from \eqref{E:SecMom-Lower}, \eqref{E5_:J0Klow} and the asymptotics of $E_{\alpha,\beta}(z)$ in Lemma
\ref{L:Eab}, we obtain, via the calculation that led to \eqref{E5_:GrowthIn}, but without replacing $x$ by $\exp(\beta t)$, the same lower bound as \eqref{E:m2}.
Note that this lower bound does not depend on $x$.
This proves the statement (2) with $p=2$. For $p>2$, we use H\"older's inequality
\[
\E\left[|u(t,x)|^2\right] \le \E\left[|u(t,x)|^p\right]^{2/p}\:.
\]
Hence, $\underline{m}_p(x) \ge \frac{p}{2}\; \underline{m}_2(x)$.
This completes the proof of  Theorem \ref{T:Intermittency}.
\end{proof}

\section*{Acknowledgements}
The authors thank Davar Khoshnevisan for some interesting discussions on this problem.
A preliminary version of this work was presented at a conference in Banff, Canada, in April 2012.


\addcontentsline{toc}{section}{Bibliography}
\def\polhk#1{\setbox0=\hbox{#1}{\ooalign{\hidewidth
  \lower1.5ex\hbox{`}\hidewidth\crcr\unhbox0}}} \def\cprime{$'$}

%

\end{document}